\newtheorem{Proposition}{Proposition}[section]
\newtheorem{example}{Example}[section]
\newtheorem{lemma}[Proposition]{Lemma}
\newtheorem{definition}[Proposition]{Definition}
\newtheorem{remark} [Proposition]{Remark}
\newtheorem{theorem}[Proposition]{Theorem}
\newtheorem{corollary}[Proposition]{Corollary}
\DeclareMathOperator{\Int}{int}
\DeclareMathOperator{\comp}{comp}
\DeclareMathOperator{\id}{id}
\DeclareMathOperator{\dist}{dist}
\DeclareMathOperator{\Crit}{Crit}
\newcommand{\Lim}[1]{\raisebox{0.5ex}{\scalebox{0.8}{$\displaystyle \lim_{#1}\;$}}}
\begin{document}

\title{Robust and generic properties for piecewise continuous maps on the interval}

\author{A. Calder\'{o}n\footnote{Departamento de Ciencias Matem\'aticas y F\'isicas, Facultad de Ingenier\'ia, Universidad Cat\'olica de Temuco, Rudecindo Ortega St. \#2950, Temuco, Chile. Email: {\tt acalderon@uct.cl}}}

\maketitle

\begin{abstract}
We construct an appropriate metric on the collection of piecewise $\mathcal C^r$ maps defined on a compact interval. Although this metric space turns out to be not complete, we show that it is indeed a Baire space. As an application, we prove the robustness of non-degenerate critical points for this class of systems and we show the genericity of piecewise Lipschitz maps that admit an invariant Borel probability measure.\\

\noindent{{\bf Keywords:} interval map, piecewise continuous, topological genericity, invariant measures.}\\
\noindent{{\bf MSC 2020:} 37E05 (Primary), 54E52 (Secondary), 37C20 (Secondary).}
\end{abstract}

\section{Introduction}\label{sec0}

Given $r\geq0$, we consider piecewise $\mathcal C^r$ maps (in short PC${}^r$-maps) from a compact interval into itself. We are interested in generical and robust properties associated with this class of dynamical systems. For this, it is necessary to endow the functions space with an appropriate metric, inducing a concept of perturbation that allows the description of dynamic phenomenologies. We consider only maps such that these and their derivatives are uniformly continuous on the interior of each subinterval induced by a finite partition of the definition interval.

In \cite{CB11}, Catsigeras and Budelli perturbed injective piecewise contracting maps on $\mathbb R^n$ to show that their asymptotic dynamics is (pseudo) generically periodic. It is remarkable that the perturbations they define are not reduced to a finite number of real parameters as is often the case in the theory of piecewise continuous maps. However, these perturbations are not induced by a metric; in fact, the neighborhoods they consider are not even a basis for a topology. In dimesion one, A. Nogueira, B. Pires and R. Rosales introduced a concept of Lebesgue-genericity to describe the typical asymptotic dynamics of piecewise contracting maps (see \cite{NPR14,NPR18}). All the known genericity concepts for PC${}^r$-maps involve the Lebesgue measure in some way (see also \cite[Remark 1.4.6]{K98}) and the lack of a useful metrizable topology prevents us to consider topological genericity in this context.

A dynamically useful concept of perturbation is known for continuous maps (respectively, maps of class $\mathcal C^r$) defined on a compact space, where the uniform distance (respectively, $\mathcal C^r$-metric: {\em the one that uniformly compares the map and its derivatives}) works very well.
One of the goal of this paper is the construction of a metric --which we will denote by \underline{$\comp^r$}-- on collection of PC${}^r$-maps capable of describing general dynamic aspects. Among the properties we are going to prove for this {\em PC-metric}, we highlight the following:
\begin{itemize}
    \item We can study perturbations of maps that do not belong to parameterized families;
    \item The proximity between maps does not depend ``too much'' on the discontinuities they have, this idea will allow us to compare maps with different sets of discontinuities, and at the same time, to induce a reasonable concept of convergence (in Section \ref{sec1} this idea is explained in detail);
    \item The functions metric space is a Baire space. This result will allow us to handle the generical properties.
\end{itemize}
We apply this to show that the non-degenerate critical points prevail under small perturbations and that generically the piecewise Lipschitz maps admit an invariant Borel probability measure. Specifically, if $\mathcal{PC}_N^r(X)$ denotes the space of PC${}^r$-maps with $N$ continuity branches and defined on the compact interval $X$, and also $\comp^r$ denotes the {\em piecewise comparison metric} (see Definition \ref{METRICS} in Section \ref{sec2}), the main results of this paper are the following:

\begin{theorem}
$\big(\mathcal{PC}_N^r(X),\comp^r\!\big)$ is a Baire space.
\end{theorem}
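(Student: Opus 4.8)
The first point to record is that, since $(\mathcal{PC}_N^r(X),\comp^r)$ is not complete, the Baire category theorem cannot be applied to it directly. Instead the plan is to use the classical facts that a metrizable space is a Baire space as soon as it is completely metrizable, and that (Alexandrov) a metric space is completely metrizable if and only if it is a $G_\delta$ subset of its metric completion. So the proof splits into two tasks: (i) identify, as concretely as possible, the completion $\widehat{\mathcal M}$ of $(\mathcal{PC}_N^r(X),\comp^r)$, and (ii) show that the canonical image of $\mathcal{PC}_N^r(X)$ is a $G_\delta$ subset of $\widehat{\mathcal M}$.

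For task (i) I would argue that the Cauchy sequences which fail to converge inside $\mathcal{PC}_N^r(X)$ are precisely those along which the geometric/combinatorial configuration degenerates: two of the $N-1$ breakpoints merge, a breakpoint escapes to an endpoint of $X$, or a breakpoint stops being an effective discontinuity (the $\mathcal C^r$-jump of the map across it tends to $0$). This suggests modelling $\widehat{\mathcal M}$ as a space of ``generalized'' PC${}^r$-maps described by tuples $(c_1\le\cdots\le c_{N-1};\,f_1,\dots,f_N)$ with $a\le c_1$, $c_{N-1}\le b$ and each $f_i\in\mathcal C^r([c_{i-1},c_i],X)$ on a possibly degenerate closed piece, taken modulo the identification induced by $\comp^r$ and equipped with the continuous extension of $\comp^r$; one then checks, using the explicit description of $\comp^r$ from Section \ref{sec2} together with completeness of the $\mathcal C^r$-metric on each piece, that this space is complete and that $\mathcal{PC}_N^r(X)\hookrightarrow\widehat{\mathcal M}$ is a dense isometric embedding. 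For task (ii) I would then write $\mathcal{PC}_N^r(X)=\bigcap_{m\ge 1}V_m$ inside $\widehat{\mathcal M}$, where $V_m$ consists of those generalized maps whose $N-1$ breakpoints are pairwise at distance $>1/m$, are at distance $>1/m$ from both endpoints of $X$, and whose $\mathcal C^r$-jumps all have size $>1/m$. Granting that the breakpoint locations and the jump sizes depend continuously on the point of $\widehat{\mathcal M}$, each $V_m$ is open, so $\mathcal{PC}_N^r(X)$ is $G_\delta$; Alexandrov's theorem supplies an equivalent complete metric, and the Baire property follows.

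The step I expect to be the main obstacle is verifying that these three non-degeneracy conditions really do cut out \emph{open} subsets of $\widehat{\mathcal M}$, i.e.\ that $\comp^r$-convergence forces the non-degeneracy data to pass to the limit. This is exactly where the delicate feature of $\comp^r$ --- that it ``does not depend too much on the discontinuities'' --- must be exploited in both directions at once: leniently enough that maps with slightly displaced or near-removable breakpoints are $\comp^r$-close, yet rigidly enough that a genuine discontinuity of a limit map is inherited, along a nearby sequence, by a breakpoint of comparable magnitude. If a clean description of the completion turns out to be awkward, the same conclusion can be obtained by a direct nested-balls argument: given dense open sets $(U_k)_{k\ge1}$ and a nonempty open $W$, one builds centres $g_k\in W\cap U_1\cap\cdots\cap U_k$ and radii $\rho_k\downarrow 0$ so that the $g_k$ all have breakpoints $\varepsilon$-effective and $\eta$-separated for some fixed $\varepsilon,\eta>0$ and $\rho_k\ll\varepsilon,\eta$; the cited rigidity of $\comp^r$ then forces the $N-1$ breakpoint sequences to be Cauchy with non-degenerate limit and the branch maps to be $\mathcal C^r$-Cauchy on the corresponding closed pieces, so that $(g_k)$ converges \emph{inside} $\mathcal{PC}_N^r(X)$ to a point of $W\cap\bigcap_k U_k$.
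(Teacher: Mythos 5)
Your fallback argument (nested balls with radii shrinking fast enough that the breakpoints stay uniformly separated) is essentially the paper's proof verbatim, and it is the route the paper actually takes: it chooses $r_1$ smaller than a third of the minimal piece length of $f_1$ and $r_{n+1}\le r_1/3^{n+1}$, so that the total breakpoint drift is at most $r_1/2$ and no piece can collapse, then invokes the characterization of convergent Cauchy sequences (Theorem \ref{COLLAPSE}: a Cauchy sequence converges iff its collapse index is $0$). Your primary route ($G_\delta$ in the completion plus Alexandrov) is also viable in principle, but as written it contains a genuine error. You list ``a breakpoint stops being an effective discontinuity'' as a degeneration obstructing convergence and accordingly put a lower bound on the $\mathcal C^r$-jumps into the sets $V_m$. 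But in this paper the points of $\Delta_f$ are allowed to be removable (Section \ref{sec1} says so explicitly), so a map all of whose breakpoints are continuity points is a perfectly good element of $\mathcal{PC}_N^r(X)$. Consequently $\bigcap_m V_m$ as you define it is strictly smaller than $\mathcal{PC}_N^r(X)$ (it omits every map with a removable breakpoint), and the $G_\delta$ identification fails. The same spurious condition infects the fallback: demanding that all the centres $g_k$ have breakpoints $\varepsilon$-effective for a fixed $\varepsilon$ is both unnecessary and not obviously achievable, since the dense open sets $U_k$ give you no control over jump sizes. The only degeneration that matters is collapse of a continuity piece (merging breakpoints, or a breakpoint reaching an endpoint of $X$); drop the jump condition and both of your routes close up.

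The second, larger issue is that the real mathematical content is hidden in the step you defer: ``one then checks, using \dots completeness of the $\mathcal C^r$-metric on each piece, that this space is complete,'' and, in the fallback, that the branch maps are ``$\mathcal C^r$-Cauchy on the corresponding closed pieces'' with a limit in the space. Because the branches of nearby maps live on \emph{different} intervals and are compared only after composing with the piecewise-linear reparametrizations $\xi(f,g;\cdot)$, this is not the standard completeness of $\mathcal C^r(I)$: one must show that a $\chi$-Cauchy sequence of branches whose domains converge to a non-degenerate interval has a continuous limit (the paper's Lemma \ref{FIX}, proved via the auxiliary points $s_n(x)=\xi(f_{n-1},f_n;s_{n-1}(x))$), and that differentiation passes to the limit under this reparametrized convergence (Lemma \ref{DERIVATIVE}, proved via the Fundamental Theorem of Calculus with a change of variables). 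These two lemmas, assembled into Theorem \ref{COLLAPSE}, are precisely what your proposal assumes; without them neither the completion description nor the convergence of the nested-ball centres is established.
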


\begin{theorem}
Over the space $\big(\mathcal{PC}_N^r(X),\comp^r\!\big)$ the following applications are valid: \\[1ex]
1.$\;$ Suppose that $f\in\mathcal{PC}_N^r(X)$, $r\geq2$, has a finite number of critical points (we will write $\#\Crit(f)<\infty$) and let $\Delta$ be the set of points that are either discontinuities of $f$ or a border of the interval $X$. If each critical point of $f$ is non-degenerate and each one-sided limit of the derivative $f'$ on $\Delta$ is different than 0, then there exists $\epsilon>0$ such that $\#\Crit(g)=\#\Crit(f)$ for every $g\in\mathcal{PC}_N^r(X)$ satisfying $\comp^r(f,g)<\epsilon$.\\[1ex]
2.$\;$ Consider the subspace $\mathcal{P\!L}_N(X)\subset\mathcal{PC}_N^0(X)$ of maps that are piecewise Lipschitz. Then, there exists a residual subset $\mathcal F\subset \mathcal{P\!L}_N(x)$ such that every map of $\mathcal F$ admits an invariant Borel probability measure. 
\end{theorem}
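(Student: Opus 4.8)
Both assertions are local‑to‑global arguments that exploit the way $\comp^r$ simultaneously controls, on subsets bounded away from the discontinuities, the $\mathcal C^r$‑distance between maps, and, near the discontinuities, the positions of the discontinuity points and the one‑sided limits of the derivatives (the properties of $\comp^r$ established in Section \ref{sec2}).

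\emph{Part 1 (robustness of the critical set).} Write $\Crit(f)=\{c_1,\dots,c_\ell\}$; by hypothesis each $c_i$ lies in the interior of a continuity branch, $f'(c_i)=0$, $f''(c_i)\neq0$, and $\Crit(f)\cap\Delta=\varnothing$. First I would pick, around each $c_i$, a closed interval $I_i$ contained in the interior of a branch, pairwise disjoint and disjoint from $\Delta$, on which $|f''|\geq\beta_i>0$ (possible because $f''$ is continuous and $f''(c_i)\neq0$); then $f'$ is strictly monotone on $I_i$, vanishes only at $c_i$, and has opposite signs and $|f'|\geq\alpha_i>0$ at the endpoints of $I_i$. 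Since $I_i$ is bounded away from $\Delta$, for $g$ with $\comp^r(f,g)$ small $g$ is $\mathcal C^r$‑close to $f$ on $I_i$; choosing $\epsilon<\min_i(\beta_i,\alpha_i)$ forces $g''$ to keep a constant sign on $I_i$ and $g'$ to change sign across $I_i$, so $g$ has exactly one critical point in each $I_i$. It then remains to rule out critical points of $g$ outside $\bigcup_iI_i$. I would split the complement into a compact part $K$ (a finite union of closed subintervals of branch interiors, disjoint from $\Delta$ and from $\Crit(f)$), on which $|f'|\geq\delta>0$ by compactness, hence $|g'|>0$ for $\epsilon<\delta$; and into small one‑sided neighbourhoods of the points of $\Delta$, on which $|f'|$ is bounded away from $0$ because the one‑sided limits of $f'$ there are nonzero. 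On the latter I would invoke the properties of $\comp^r$: for $g$ close enough to $f$ its discontinuities sit near those of $f$, its one‑sided derivative limits there are close to the corresponding limits of $f'$, and away from discontinuities $g'$ is uniformly close to $f'$; patching these keeps $|g'|>0$. Hence for $\epsilon$ below a uniform threshold $\#\Crit(g)=\ell$.

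\emph{Part 2 (generic existence of invariant measures).} The key remark is that any $g\in\mathcal{P\!L}_N(X)$ possessing a periodic orbit yields an invariant Borel probability measure (the equidistributed measure on the orbit); more robustly, if there is a closed interval $J$ with $J,g(J),\dots,g^{p-1}(J)$ disjoint from $D_g\cup\partial X$ and $g^p(J)\subset\Int J$, then $g^p|_J$ is continuous and has a fixed point, producing such an orbit. Let $\mathcal V_k$ be the set of $g$ admitting such a pair $(p,J)$ with $p\leq k$. Using that $\comp^0$‑closeness keeps discontinuity sets close and makes maps uniformly close on sets bounded away from discontinuities, one checks that for $g'$ near $g\in\mathcal V_k$ the same $J$ (slightly shrunk) still works: the iterates $g'^{\,j}(J)$ stay in an $O(\epsilon)$‑neighbourhood of $g^j(J)$ because $g$ is Lipschitz on the relevant branches, so $g'^{\,j}(J)$ avoids $D_{g'}$ for $j<p$ and $g'^{\,p}(J)\subset\Int J$. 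Thus each $\mathcal V_k$ is open. For density of $\bigcup_k\mathcal V_k$ I would run a closing‑lemma argument: given $g$ and $\epsilon>0$, pick $x$ whose forward orbit is infinite and avoids $D_g\cup\partial X$ and has distinct entries (a small preliminary perturbation if necessary; a finite or returning orbit already gives an invariant measure), take an accumulation point $p$ and times $m<n$ with $g^m(x),g^n(x)$ within $\eta\ll\epsilon$ of $p$, and, writing $y=g^{n-1}(x)$ and $z=g^m(x)$, perturb $g$ only on a tiny interval $W\ni y$ (disjoint from the remaining orbit points and from $D_g$) so that $\tilde g(y)=z$ and $\tilde g(W)$ lies in an arbitrarily small interval about $z$. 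Since $|z-g(y)|=|g^m(x)-g^n(x)|\leq2\eta$, this leaves $D_g$ unchanged and moves values by $2\eta+o(1)$, hence is an $\epsilon$‑perturbation in $\comp^0$; and a small closed interval $J$ around $z$ then satisfies $\tilde g^{\,j}(J)\approx g^{m+j}(x)$ for $j<n-m$ and $\tilde g^{\,n-m}(J)\subset\tilde g(W)\subset\Int J$, so $\tilde g\in\mathcal V_{n-m}$. Therefore $\mathcal F:=\bigcup_k\mathcal V_k$ is open and dense, in particular residual, and every map in it admits an invariant Borel probability measure.

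\emph{Main obstacle.} In both parts the delicate point is the behaviour near the discontinuities, where $f$ and its perturbation have genuinely different discontinuity sets: in Part 1 one must keep $g'$ away from $0$ on one‑sided neighbourhoods of $\Delta$, and in Part 2 one must prevent a small $\comp^0$‑perturbation from inserting a discontinuity inside $J$ or one of its first $p-1$ iterates (which would destroy continuity of $g^p|_J$ and the resulting fixed point). Both reductions rest squarely on the control of discontinuity positions and one‑sided derivative limits furnished by $\comp^r$ in Section \ref{sec2}; everything else is compactness and one‑dimensional intermediate‑value arguments.
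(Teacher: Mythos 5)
For Part 1 your argument is correct and is essentially the paper's: isolate each non-degenerate internal critical point in an interval where $|f''|$ is bounded below, use the sign change of $f'$ to get exactly one zero of $g'\circ\xi(f,g;\cdot)$ there, and exclude zeros elsewhere using $|f'|\geq\delta$ on the complement (including one-sided neighbourhoods of $\Delta$, where the nonvanishing one-sided limits of $f'$ give the bound). The paper packages this branch-by-branch through the auxiliary space $\mathbb{FI}(X;W)$ and a sequential compactness argument (Lemma \ref{NDCRITICAL}, Corollary \ref{NDCRITICAL.2}, Corollary \ref{PC.CRIT.POINT}); your version is more direct but the content is the same. The only point to make explicit is that $\comp^r$ compares $g^{(j)}\circ\xi(f,g;\cdot)$ with $f^{(j)}$ rather than $g^{(j)}$ with $f^{(j)}$ pointwise; since $\xi$ is a homeomorphism taking $\overline{X_i(f)}$ onto $\overline{X_i(g)}$, counting zeros of $g'\circ\xi$ on your intervals $I_i$ is legitimate.

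For Part 2 you take a genuinely different route. The paper does not close orbits: it invokes Pires' theorem (Theorem \ref{PIRES}) that maps \emph{without connections} admit an invariant measure, and proves that the finite-time no-connection conditions $\mathcal F_n$ are open (Lemmas \ref{MISMA.PIEZA3}, \ref{NO.INTERSECTA}) and dense (Lemma \ref{FN.LEMMA}, via precomposition with a homeomorphism close to the identity), then applies Theorem \ref{BAIRE}. Your closing-lemma scheme, if completed, would actually yield an \emph{open dense} set rather than a residual one, which is stronger and does not even need the Baire property; the openness of your $\mathcal V_k$ is fine (the estimate $|g'(y)-g(y)|\leq(1+\lambda_g)\comp^0(g,g')$ on compacta away from $\Delta_g\cup\Delta_{g'}$, using the Lipschitz constant of the \emph{centre} $g$, is exactly the mechanism of Lemma \ref{MISMA.PIEZA3}), and the surgery producing a trapping region from a near-return is standard. (Minor point: throughout, the iterates of $J$ must avoid $\Delta_g$, the set of discontinuity points, not $D_g$, which in the paper's notation is the set of one-sided limit \emph{values}.)

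The gap is in the density step, at ``pick $x$ whose forward orbit is infinite and avoids [$\Delta_g$] \dots (a small preliminary perturbation if necessary).'' Such a point need not exist: $g$ is not defined at $\Delta_g$ (values there are quotiented out), and it can happen that every orbit reaches $\Delta_g$ in finitely many steps --- for instance if $g$ is constant equal to some $c\in\Delta_g$ on a branch whose preimages cover $X$. Even the weaker requirement you actually need, an orbit segment of length $K(\eta)\approx|X|/\eta$ avoiding $\Delta_g$ so that pigeonhole yields an $\eta$-near-return, can fail for the same reason. Removing this obstruction is not a routine perturbation: the natural fix is to precompose with a homeomorphism near the identity that moves $\Delta_g$ off finitely many orbit points, one must verify this is $\comp^0$-small (it is emphatically not $\dist^0_\infty$-small --- this is the raison d'\^etre of the metric), and the finitely many successive adjustments must not destroy the avoidance already arranged, which requires exactly the quantitative shadowing estimates of Lemma \ref{MISMA.PIEZA3}. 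This is the content of the paper's Lemma \ref{FN.LEMMA} and is the real work of Part 2; as written, your proof defers it to a parenthesis.
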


\noindent{\em Organization of the article:} Section \ref{sec1} is dedicated to defining the objects of study and setting the notation that we will use throughout the paper. Here we also mention why we do not use the uniform metric. In Section \ref{sec2}, we introduce the metric $\comp^r$. The idea is to apply a piecewise linear transformation that deforms the continuity pieces from one map into those of another, thus allowing both maps to be uniformly compared. In Section \ref{sec3} we characterize the convergent Cauchy sequences (Theorem \ref{COLLAPSE}): a Cauchy sequence is convergent if and only the number of pieces of continuity is kept at infinity. In Section \ref{sec4} we prove that it is a Baire space using the characterization mentioned previously (Theorem \ref{BAIRE}). Sections \ref{sec5} and \ref{sec6} are devoted to applications of our constructions. We prove the robustness of non-degenerate critical points (Corollary \ref{PC.CRIT.POINT}) and the genericity of piecewise Lipschitz maps that admit invariant Borel probability measures (Corollary \ref{PC.GENE}).

\section{Preliminary definitions}\label{sec1}

Let $X\subset\mathbb{R}$ be a compact interval with non-empty interior and $r\geq0$. A map $f:X\to X$ is said {\em piecewise $\mathcal C^r$-continuous interval map} (hereafter PC-map) if there exists a finite collection of $N\geq2$ open subsets $X_1(f),X_2(f),\ldots,X_N(f)\subset X$, the {\em continuity pieces} of $f$, such that 
\begin{enumerate}
    \item $X_{i_1}(f)\cap X_{i_2}(f)=\emptyset\,$ for all $i_1,i_2\in\{1,\ldots,N\}$ such that $i_1\neq i_2\,$;
    \item $X=\overline{X_1(f)}\cup \overline{X_2(f)}\cup\ldots\cup\overline{X_N(f)}\,$;
    \item $f$ is of class $\mathcal C^r$ on each continuity piece;
    \item For all $j\in\{0,\ldots,r\}$ the map $f^{(j)}$ is uniformly continuous on each continuity piece. Here, $f^{(0)}:=f$ and $f^{(j)}$ denotes the $j$th derivative of the restriction of $f$ to $X_1(f)\cup\ldots\cup X_N(f)$.
\end{enumerate}
\vspace*{1ex}
{\bf Important:} The notation $f^{(j)}$ refers to the $j$th derivative of $f$ defined on $X_1(f)\cup\ldots\cup X_N(f)$, while $f^j:=f\circ\ldots\circ f$ ($j$ times) denotes the $j$th iteration of $f$.
\vspace*{1ex}

For a PC-map $f:X\to X$, we let $c_0,c_N$ denote the extreme points of $X$ and $\Delta_f:=\big\{c_1^f<c_2^f<\cdots<c_{N-1}^f\big\}$ denote the set of the boundaries of the continuity pieces of $f$. That is,
\[
X_1(f)=\big[c_0,c_1^f\big)\,,\; X_2(f)=\big(c_1^f,c_2^f\big)\,,\,\ldots\,,\; X_N(f)=\big(c_{N-1}^f,c_N\big].
\]
Note that the points of $\Delta_f$ are either removable (i.e. continuity points) or jump discontinuities. From the uniform continuity of the $j$th derivative $f^{(j)}$, $1\leq j\leq r$, for any $i\in\{1,\dots,N\}$ the map $f^{(j)}|_{X_i}$ admits a unique continuous extension $f_i^{(j)}=[f^{(j)}]_i:\overline{X_i}\to \mathbb R$. The same happens when $j=0$, but in this case the continuous extension of $f|_{X_i}$ is denoted by $f_i=f_i^{(0)}:\overline{X_i}\to X$.

For $r\geq0$, let $PC^r_N(X)$ be the collection of all piecewise $\mathcal C^r$-continuous interval maps defined on $X$ with $N$ continuity pieces. Define the equivalence relation $\sim$ on $PC^r_N(X)$ given by
\[
f\sim g\qquad\Leftrightarrow\qquad \left\{\!\!\begin{array}{l}
     \Delta_f=\Delta_g\;\text{ and}  \\[1ex]
     \text{$f=g\;$ on $X\setminus\Delta_f$.}
\end{array} \right.
\]
We denote by $\mathcal{PC}^r_N(X)$ the set of equivalence classes $PC^r_N(X)/\!\!\sim$. Note that any two maps in a same class of $\mathcal{PC}^r_N(X)$ have exactly the same continuity pieces and the same continuity extensions. For this reason, we continue to use the notation of piecewise map ($f$, $g$, $h$, etc...) to refer to the classes of $\mathcal{PC}^r_N(X)$. 

Perhaps, the most natural choice for metric on the collection $\mathcal{PC}^r_N(X)$ is the (uniform) $\mathcal C^r$-metric. Given $f,g\in\mathcal{PC}^r_N(X)$, we define the {\em $\mathcal C^r$-uniform distance} between $f$ and $g$ by
\begin{equation}
\dist_\infty^r(f,g):=\sum_{i=1}^{N-1}|c_i^f-c_i^g|+\sum_{j=0}^r\,\sup\big\{|f^{(j)}(x)-g^{(j)}(x)|\ :\ x\in X\setminus(\Delta_f\cup\Delta_g)\big\}.
\label{UNI.DISTANCE}
\end{equation}
It is easy to verify that $f=g$ if and only if $\dist_\infty^r(f,g)=0$, so that $\dist^r_\infty$ is a metric on  $\mathcal{PC}_N^r(X)$. If we fix $\Delta\subset \Int(X)$ with $\#\Delta=N-1$ and we consider the subspace $\mathcal F_\Delta\subset\mathcal{PC}_N^r(X)$ such that $\Delta_f=\Delta$ if and only if $f\in\mathcal F_\Delta$, the metric $\dist_\infty^r$ behaves well on $\mathcal F_{\Delta}$: the pair $\big(\mathcal F_\Delta,\dist^r_\infty\big)$ is a complete metric space and, therefore, is a Baire space. Moreover, on each continuity piece determined by $\Delta$, the proximity between maps is similar to the continuous case. For example, in \cite{BO18} the authors define residual sets of $(\mathcal F_\Delta,\dist^0_\infty)$ to determine certain typical dynamic properties in a topological sense. 

In general, $\dist^r_\infty$ does not behave well in presence of maps with different borders of continuity pieces. Precisely, in $\big(\mathcal{PC}_N^r(X),\dist_\infty^r\big)$ the following phenomenon occurs: maps with ``similar graphs'' are not necessarily close. This phenomenon has consequences on the type of convergence induced by the metric. To explain this idea, consider the piecewise linear maps $f,f_n\in\mathcal{PC}_2^r\big([0,1]\big)$ ($n\geq1$ and $r\geq0$) given by
\[
f_n(x)=\left\{\!\begin{array}{ll}
     \frac{x}{2}+\frac{n+4}{2(n+2)}& \quad\text{ if $x\in\big[0,\frac{n}{2(n+2)}\big)$,} \\[1ex]
     \frac{x}{2} -\frac{n}{4(n+2)}& \quad\text{ if $x\in\big(\frac{n}{2(n+2)},1\big]$}
\end{array}\right.
\qquad\text{and}\qquad f(x)=\left\{\!\begin{array}{ll}
     \frac{x}{2}+\frac{1}{2}& \quad\text{ if $x\in\big[0,\frac{1}{2}\big)$,} \\[1ex]
     \frac{x}{2} - \frac{1}{4}& \quad\text{ if $x\in\big(\frac{1}{2},1\big]$.}
\end{array}\right. 
\]
Clearly, $c_1^{f_n}:=\frac{n}{2(n+2)}<c_1^{f}:=\frac{1}{2}$ for every $n\geq1$. Moreover, for every $\epsilon>0$ there exists $n\geq1$ such that
\[
\big\|(f_n-f)|_{[0,c_1^{f_n})}\big\|_{\infty}<\epsilon\qquad\text{and}\qquad \big\|(f_n-f)|_{(c_1^{f},1]}\big\|_{\infty}<\epsilon.
\]
For large values of $n$, the graphs of $f_n$ and $f$ are similar (the graph of $f_n$ converges to the graph of $f$ in the Hausdorff topology); however, $f_n$ does not converge to $f$ w.r.t. $\dist_\infty^r$. In fact, note that $\dist_\infty^r(f_n,f)\geq\frac{1}{2}$ for all $n\geq1$.
\begin{center}
\noindent\begin{minipage}[c]{.4\linewidth}
  \centering
    \tikzset{every picture/.style={line width=0.75pt}} 
\begin{tikzpicture}[x=0.75pt,y=0.75pt,yscale=-1,xscale=1]
\draw    (169.33,250.22) -- (169.33,236.67) -- (169.33,53.17) ;
\draw [shift={(169.33,50.17)}, rotate = 450] [fill={rgb, 255:red, 0; green, 0; blue, 0 }  ][line width=0.08]  [draw opacity=0] (8.93,-4.29) -- (0,0) -- (8.93,4.29) -- cycle    ;
\draw    (169.33,250.22) -- (395.11,250.22) ;
\draw [shift={(398.11,250.22)}, rotate = 180] [fill={rgb, 255:red, 0; green, 0; blue, 0 }  ][line width=0.08]  [draw opacity=0] (8.93,-4.29) -- (0,0) -- (8.93,4.29) -- cycle    ;
\draw [color={rgb, 255:red, 74; green, 144; blue, 226 }  ,draw opacity=0.7 ][line width=0.75]  [dash pattern={on 4.5pt off 4.5pt}]  (300,70.22) -- (300,250.25) ;
\draw  [fill={rgb, 255:red, 74; green, 144; blue, 226 }  ,fill opacity=0.78 ] (298,251.67) .. controls (298.78,252.65) and (300.21,252.82) .. (301.2,252.04) .. controls (302.19,251.26) and (302.35,249.83) .. (301.57,248.84) .. controls (300.79,247.86) and (299.36,247.69) .. (298.37,248.47) .. controls (297.39,249.25) and (297.22,250.68) .. (298,251.67) -- cycle ;
\draw  [color={rgb, 255:red, 74; green, 144; blue, 226 }  ,draw opacity=0.34 ] (169.83,252.17) .. controls (169.83,256.84) and (172.16,259.17) .. (176.83,259.17) -- (233.33,259.17) .. controls (240,259.17) and (243.33,261.5) .. (243.33,266.17) .. controls (243.33,261.5) and (246.66,259.17) .. (253.33,259.17)(250.33,259.17) -- (292.33,259.17) .. controls (297,259.17) and (299.33,256.84) .. (299.33,252.17) ;
\draw  [color={rgb, 255:red, 74; green, 144; blue, 226 }  ,draw opacity=0.34 ] (300.83,252.5) .. controls (300.83,257.17) and (303.16,259.5) .. (307.83,259.5) -- (352.44,259.5) .. controls (359.11,259.5) and (362.44,261.83) .. (362.44,266.5) .. controls (362.44,261.83) and (365.77,259.5) .. (372.44,259.5)(369.44,259.5) -- (372.44,259.5) .. controls (377.11,259.5) and (379.44,257.17) .. (379.44,252.5) ;
\draw  [fill={rgb, 255:red, 208; green, 2; blue, 27 }  ,fill opacity=0.7 ] (276,251.33) .. controls (276.78,252.32) and (278.21,252.49) .. (279.2,251.71) .. controls (280.19,250.93) and (280.35,249.5) .. (279.57,248.51) .. controls (278.79,247.52) and (277.36,247.35) .. (276.37,248.13) .. controls (275.39,248.91) and (275.22,250.35) .. (276,251.33) -- cycle ;
\draw [color={rgb, 255:red, 208; green, 2; blue, 27 }  ,draw opacity=0.78 ][fill={rgb, 255:red, 208; green, 2; blue, 27 }  ,fill opacity=0.7 ][line width=0.75]  [dash pattern={on 4.5pt off 4.5pt}]  (277.79,69.89) -- (277.79,249.92) ;
\draw  [color={rgb, 255:red, 208; green, 2; blue, 27 }  ,draw opacity=0.36 ] (278.33,251) .. controls (278.33,255.67) and (280.66,258) .. (285.33,258) -- (317.83,258) .. controls (324.5,258) and (327.83,260.33) .. (327.83,265) .. controls (327.83,260.33) and (331.16,258) .. (337.83,258)(334.83,258) -- (372.33,258) .. controls (377,258) and (379.33,255.67) .. (379.33,251) ;
\draw [color={rgb, 255:red, 208; green, 2; blue, 27 }  ,draw opacity=1 ][line width=1.5]    (168.83,101.25) -- (277.79,70.89) ;
\draw  [color={rgb, 255:red, 208; green, 2; blue, 27 }  ,draw opacity=0.36 ] (169.83,250.5) .. controls (169.83,255.17) and (172.16,257.5) .. (176.83,257.5) -- (186.83,257.5) .. controls (193.5,257.5) and (196.83,259.83) .. (196.83,264.5) .. controls (196.83,259.83) and (200.16,257.5) .. (206.83,257.5)(203.83,257.5) -- (270.33,257.5) .. controls (275,257.5) and (277.33,255.17) .. (277.33,250.5) ;
\draw [color={rgb, 255:red, 208; green, 2; blue, 27 }  ,draw opacity=1 ][line width=1.5]    (277.79,249.92) -- (379.83,221.92) ;
\draw [color={rgb, 255:red, 74; green, 144; blue, 226 }  ,draw opacity=1 ][line width=1.5]    (169.33,106.92) -- (300,70.22) ;
\draw [color={rgb, 255:red, 74; green, 144; blue, 226 }  ,draw opacity=1 ][line width=1.5]    (299.79,250.25) -- (379.83,228.42) ;
\draw    (279.5,214.5) -- (298.33,214.5) ;
\draw    (298.33,211.75) -- (298.33,217) ;
\draw    (279.33,211.75) -- (279.33,217) ;
\draw   (302,243.08) .. controls (306.67,243.08) and (309,240.75) .. (309,236.08) -- (309,167.33) .. controls (309,160.66) and (311.33,157.33) .. (316,157.33) .. controls (311.33,157.33) and (309,154) .. (309,147.33)(309,150.33) -- (309,78.58) .. controls (309,73.91) and (306.67,71.58) .. (302,71.58) ;
\draw  [fill={rgb, 255:red, 208; green, 2; blue, 27 }  ,fill opacity=1 ] (276,251.33) .. controls (276.78,252.32) and (278.21,252.49) .. (279.2,251.71) .. controls (280.19,250.93) and (280.35,249.5) .. (279.57,248.51) .. controls (278.79,247.52) and (277.36,247.35) .. (276.37,248.13) .. controls (275.39,248.91) and (275.22,250.35) .. (276,251.33) -- cycle ;
\draw  [fill={rgb, 255:red, 74; green, 144; blue, 226 }  ,fill opacity=1 ] (298.21,251.67) .. controls (298.99,252.65) and (300.43,252.82) .. (301.41,252.04) .. controls (302.4,251.26) and (302.57,249.83) .. (301.79,248.84) .. controls (301.01,247.86) and (299.57,247.69) .. (298.59,248.47) .. controls (297.6,249.25) and (297.43,250.68) .. (298.21,251.67) -- cycle ;
\draw  [dash pattern={on 0.84pt off 2.51pt}]  (169.33,70.89) -- (380,70.89) ;
\draw  [dash pattern={on 0.84pt off 2.51pt}]  (380,70.89) -- (380,250.22) ;
\draw (245.5,272.67) node  {$\scriptstyle X_{1}(f)$};
\draw (371.5,273.5) node  {$\scriptstyle X_{2}(f)$};
\draw (199.5,272.17) node  {$\scriptstyle X_{1}(f_n)$};
\draw (328,274.17) node   {$\scriptstyle X_{2}(f_n)$};
\draw (288,206) node {$\scriptstyle \frac{1}{n+2}$};
\draw (256,91.67) node  {$f$};
\draw (185,86.67) node  {$f_n$};
\draw (155,72) node  {$1$};
\draw (155,250) node  {$0$};
\draw (301,263.67) node    {$\scriptstyle c^{f}_{1}$};
\draw (277.5,264.17) node  {$\scriptstyle c^{f_n}_{1}$};
\draw (330,156.83) node  [font=\scriptsize] {$\geq\frac{1}{2}$};
\end{tikzpicture}
\end{minipage}\hspace*{0ex}\begin{minipage}[b]{.6\linewidth}
\makeatletter
\def\@captype{figure}
\makeatother
\caption{\small Graphs of $f$ (blue) and $f_n$, $n\geq1$ (red). \\[2.2cm]
}
\vspace*{-.5cm}
\label{fig:SIMILAR.MAPS}
\end{minipage}
\end{center}
This example shows that, w.r.t. $\dist^r_\infty$, the discontinuities have a strong influence in the convergence. Thus, we have that two maps $f,g\in\mathcal{PC}^r_2(X)$ are close (according to $\dist_\infty^r$) if $\Delta_g=\{c_1^f\}$ and $f$ and $g$ are uniformly close (in a certain sense, $g$ belong to a {\em vertical neighborhood} of $f$).\\[-2ex]

\noindent To conclude our analysis on $\dist_\infty^r$, we show that $\big(\mathcal{PC}_N^r(X),\dist_\infty^r\big)$ is not complete.

\begin{example}\em\label{EXAMPLE2.CAUCHY}
Consider the sequence $\big\{f_n:[0,1]\to[0,1]\big\}_{n\in\mathbb N}\subset\mathcal{PC}_2^0\big([0,1]\big)$ defined for any $n\in\mathbb N$ by
\[
f_n(x):=\left\{\begin{array}{ll}
    \frac{x}{2^n} & \quad \text{if $\,x\in\big[0,\frac{1}{2^{n+1}}\big)$,} \\[1ex]
    0 & \quad \text{if $\,x\in \big(\frac{1}{2^{n+1}},1\big]$.}
\end{array}\right.
\]
For any $n,m\geq0$ such that $n>m$, we have that
\setlength \arraycolsep{2pt}
\begin{eqnarray*}
\dist^0_\infty(f_n,f_m)&=&\Big|\frac{1}{2^{n+1}}-\frac{1}{2^{m+1}}\Big|+\sup\big\{|f_n(x)-f_m(x)|\ :\ x\in X\setminus\{1/2^{n+1},1/2^{m+1}\}\big\}\\
&\leq&\frac{1}{2^{n+1}}+\frac{1}{2^{m+1}}+\frac{1}{2^{2m+1}},
\end{eqnarray*}
which implies that $\{f_n\}_{n\in\mathbb N}$ is a Cauchy sequence w.r.t. the metric $\dist^0_\infty$. However, the limit corresponds to a map such that one of its pieces of continuity has an empty interior, which does not belong to $\mathcal{PC}^0_2\big([0,1]\big)$.
\end{example}

\section{The piecewise comparison metric}\label{sec2}

In this section we consider a (piecewise) linear deformation of the continuity pieces of PC${}^r$-maps. If we also consider the Hausdorff distance between the pieces of continuity, we obtain a metric on $\mathcal{PC}_N^r(X)$. Furthermore, unlike what happens with the metric $\dist_\infty^r$, we show that the perturbations of discontinuity points (in the graph) are in all directions (see Lemma \ref{PERT}), which gives us a geometric idea of the neighborhoods of this new metric space.

\begin{definition}\em\label{METRICS}
Given $f,g\in \mathcal{PC}^r_N(X)$, we define the {\em $\mathcal C^r$-piecewise comparison} between $f$ and $g$ by
\begin{equation}
\comp^r(f,g):=\sum_{i=1}^{N-1}|c_i^f-c_i^g|+ \sum_{i=1}^N\sum_{j=0}^r\,\max_{x\in\overline{X_i(f)}}\big|f_i ^{(j)}(x)-g_i^{(j)}\circ \xi(f,g;x)\big|,
\label{HF.DISTANCE}
\end{equation}
where $\xi(f,g;\cdot):X\to X$ is the increasing piecewise linear homeomorphism given by 
\[
\xi(f,g;x):=\frac{c_i^g-c_{i-1}^g}{c_i^f-c_{i-1}^f}(x-c_i^f)+c_i^g \qquad\forall\,x\in\overline{X_i(f)}\,,\;\;\forall\,i\in\{1,\ldots,N\}.
\]
\end{definition}
\noindent We point out some properties of the map $\xi$ in Definition \ref{METRICS}. Let $f,g,h\in\mathcal{PC}^r_N(X)$, then\\[-2ex]

\begin{enumerate}
    \item[{\bf (P1)}] For every $i\in\{1,\ldots,N\}$, the segment $\xi_i(f,g;\cdot):=\xi(f,g;\cdot)\big|_{\overline{X_i(f)}}$ maps $\overline{X_i(f)}$ onto $\overline{X_i(g)}$; 
\end{enumerate}

\noindent\begin{minipage}[b]{0.56\linewidth}
\begin{enumerate}
    \item[{\bf (P2)}] The inverse of $\xi(f,g;\cdot)$ is $\xi(g,f;\cdot)$. Moreover, $\xi_i^{-1}(f,g;\cdot)=\xi_i(g,f;\cdot)$ for every $i\in\{1,\ldots,N\}$;
    \item[{\bf (P3)}] $\xi(f,f;\cdot)$ is the identity map on $X$. Moreover, for every $f_1,f_2\in\mathcal{PC}_N^r(X)$ and $i\in\{1,\ldots,N\}$, if $\overline{X_i(f_1)}=\overline{X_i(f_2)}$ then $\xi_i(f_1,f_2;\cdot)$ is the identity map on $\overline{X_i(f_1)}$;
    \item[{\bf (P4)}] $\xi\big(f,g;\xi(h,f;\cdot)\big)=\xi(h,g;\cdot)$. Moreover, for every $i\in\{1,\ldots,N\}$ we have 
    \begin{equation}
        \xi_i\big(f,g;\xi_i(h,f;\cdot)\big)=\xi_i(h,g;\cdot)\,;\label{XIXI}
    \end{equation}
    \item[{\bf (P5)}] If $\epsilon>0$ and $\comp^r(f,g)<\epsilon$, then $|\xi(f,g;x)-x|<\epsilon$ for any $x\in X$;
\end{enumerate}
\end{minipage}\hspace*{4ex}\begin{minipage}[b]{0.4\linewidth}
\makeatletter
\def\@captype{figure}
\makeatother
  \centering
\tikzset{every picture/.style={line width=0.75pt}} 
\begin{tikzpicture}[x=0.75pt,y=0.75pt,yscale=-1,xscale=1,scale=.43]

\draw [color={rgb, 255:red, 0; green, 0; blue, 0 }  ,draw opacity=1 ] [dash pattern={on 0.84pt off 2.51pt}]  (392.5,98) -- (392.56,110.99) -- (393.75,391.33) -- (393.75,421) ;
\draw [color={rgb, 255:red, 0; green, 0; blue, 0 }  ,draw opacity=1 ] [dash pattern={on 0.84pt off 2.51pt}]  (267.5,98) -- (267.56,110.99) -- (268.75,391.33) -- (268.75,421) ;
\draw [color={rgb, 255:red, 0; green, 0; blue, 0 }  ,draw opacity=1 ] [dash pattern={on 0.84pt off 2.51pt}]  (219.5,98) -- (219.56,110.99) -- (220.75,391.33) -- (220.75,421) ;
\draw  [dash pattern={on 0.84pt off 2.51pt}]  (59.75,397) -- (419.5,397) ;
\draw  [dash pattern={on 0.84pt off 2.51pt}]  (59.75,316) -- (419.5,316) ;
\draw  [dash pattern={on 0.84pt off 2.51pt}]  (59.75,187) -- (419.5,187) ;
\draw  [dash pattern={on 0.84pt off 2.51pt}]  (59.75,139) -- (420.5,139) ;
\draw    (60,421) -- (60,73.67) ;
\draw [shift={(60,71.67)}, rotate = 450] [color={rgb, 255:red, 0; green, 0; blue, 0 }  ][line width=0.75]    (10.93,-3.29) .. controls (6.95,-1.4) and (3.31,-0.3) .. (0,0) .. controls (3.31,0.3) and (6.95,1.4) .. (10.93,3.29)   ;
\draw    (60,421) -- (457.5,421) ;
\draw [shift={(459.5,421)}, rotate = 180] [color={rgb, 255:red, 0; green, 0; blue, 0 }  ][line width=0.75]    (10.93,-3.29) .. controls (6.95,-1.4) and (3.31,-0.3) .. (0,0) .. controls (3.31,0.3) and (6.95,1.4) .. (10.93,3.29)   ;
\draw [line width=3]    (60,421) -- (419.5,421) ;
\draw [line width=3]    (60,423) -- (60,97.67) ;
\draw  [dash pattern={on 0.84pt off 2.51pt}]  (60,97.67) -- (419.5,97.67) ;
\draw  [dash pattern={on 0.84pt off 2.51pt}]  (419.5,97.67) -- (419.5,375.67) -- (419.5,421) ;
\draw  [fill={rgb, 255:red, 255; green, 255; blue, 255 }  ,fill opacity=1 ] (112.75,421) .. controls (112.75,418.79) and (114.54,417) .. (116.75,417) .. controls (118.96,417) and (120.75,418.79) .. (120.75,421) .. controls (120.75,423.21) and (118.96,425) .. (116.75,425) .. controls (114.54,425) and (112.75,423.21) .. (112.75,421) -- cycle ;
\draw [color={rgb, 255:red, 0; green, 0; blue, 0 }  ,draw opacity=1 ] [dash pattern={on 0.84pt off 2.51pt}]  (116.5,98) -- (116.5,387.33) -- (116.5,417) ;
\draw    (61.78,419.56) -- (116.5,397) ;
\draw    (116.5,397) -- (220.11,317.22) ;
\draw  [fill={rgb, 255:red, 255; green, 255; blue, 255 }  ,fill opacity=1 ] (216.75,421) .. controls (216.75,418.79) and (218.54,417) .. (220.75,417) .. controls (222.96,417) and (224.75,418.79) .. (224.75,421) .. controls (224.75,423.21) and (222.96,425) .. (220.75,425) .. controls (218.54,425) and (216.75,423.21) .. (216.75,421) -- cycle ;
\draw  [fill={rgb, 255:red, 255; green, 255; blue, 255 }  ,fill opacity=1 ] (264.75,421) .. controls (264.75,418.79) and (266.54,417) .. (268.75,417) .. controls (270.96,417) and (272.75,418.79) .. (272.75,421) .. controls (272.75,423.21) and (270.96,425) .. (268.75,425) .. controls (266.54,425) and (264.75,423.21) .. (264.75,421) -- cycle ;
\draw  [fill={rgb, 255:red, 255; green, 255; blue, 255 }  ,fill opacity=1 ] (389.75,421) .. controls (389.75,418.79) and (391.54,417) .. (393.75,417) .. controls (395.96,417) and (397.75,418.79) .. (397.75,421) .. controls (397.75,423.21) and (395.96,425) .. (393.75,425) .. controls (391.54,425) and (389.75,423.21) .. (389.75,421) -- cycle ;
\draw  [fill={rgb, 255:red, 255; green, 255; blue, 255 }  ,fill opacity=1 ] (55.75,397) .. controls (55.75,394.79) and (57.54,393) .. (59.75,393) .. controls (61.96,393) and (63.75,394.79) .. (63.75,397) .. controls (63.75,399.21) and (61.96,401) .. (59.75,401) .. controls (57.54,401) and (55.75,399.21) .. (55.75,397) -- cycle ;
\draw  [fill={rgb, 255:red, 255; green, 255; blue, 255 }  ,fill opacity=1 ] (55.75,316) .. controls (55.75,313.79) and (57.54,312) .. (59.75,312) .. controls (61.96,312) and (63.75,313.79) .. (63.75,316) .. controls (63.75,318.21) and (61.96,320) .. (59.75,320) .. controls (57.54,320) and (55.75,318.21) .. (55.75,316) -- cycle ;
\draw  [fill={rgb, 255:red, 255; green, 255; blue, 255 }  ,fill opacity=1 ] (55.75,187) .. controls (55.75,184.79) and (57.54,183) .. (59.75,183) .. controls (61.96,183) and (63.75,184.79) .. (63.75,187) .. controls (63.75,189.21) and (61.96,191) .. (59.75,191) .. controls (57.54,191) and (55.75,189.21) .. (55.75,187) -- cycle ;
\draw  [fill={rgb, 255:red, 255; green, 255; blue, 255 }  ,fill opacity=1 ] (55.75,139) .. controls (55.75,136.79) and (57.54,135) .. (59.75,135) .. controls (61.96,135) and (63.75,136.79) .. (63.75,139) .. controls (63.75,141.21) and (61.96,143) .. (59.75,143) .. controls (57.54,143) and (55.75,141.21) .. (55.75,139) -- cycle ;
\draw    (267.5,187.5) -- (391.78,140) ;
\draw    (220.11,317.22) -- (268.5,186.5) ;
\draw    (391.78,140) -- (419.78,97.72) ;

\draw (50,431.2) node [anchor=north west][inner sep=0.75pt]    {\tiny $X_{1}( f)$};
\draw (132,431.2) node [anchor=north west][inner sep=0.75pt]    {\tiny $X_{2}( f)$};
\draw (213,431.2) node [anchor=north west][inner sep=0.75pt]    {\tiny $X_{3}( f)$};
\draw (300,431.4) node [anchor=north west][inner sep=0.75pt]    {\tiny $X_{4}( f)$};
\draw (383,431.4) node [anchor=north west][inner sep=0.75pt]    {\tiny $X_{5}( f)$};
\draw (425,392) node [anchor=north west][inner sep=0.75pt]    {\tiny $X_{1}( g)$};
\draw (425,340.2) node [anchor=north west][inner sep=0.75pt]    {\tiny $X_{2}( g)$};
\draw (425,241.2) node [anchor=north west][inner sep=0.75pt]    {\tiny $X_{3}( g)$};
\draw (425,150.2) node [anchor=north west][inner sep=0.75pt]    {\tiny $X_{4}( g)$};
\draw (425,103.2) node [anchor=north west][inner sep=0.75pt]    {\tiny $X_{5}( g)$};
\draw (279,202.2) node [anchor=north west][inner sep=0.75pt]    {\small $\xi ( f,g;\cdot )$};
\end{tikzpicture}

\vspace*{3ex}
\hspace*{0ex}\begin{minipage}[b]{.8\linewidth}
\vspace*{-1ex}
\caption{\small Example of the ho\-meo\-mor\-phism $\xi(f,g;\cdot)$, where $f,g\in\mathcal{PC}^r_5(X)$.}
\label{fig:MAPXI}
\end{minipage}
\vspace*{-1ex}
\end{minipage}

\vspace{1ex}
\begin{enumerate}
    \item[{\bf (P6)}] Let $i\in\{1,\ldots,N\}$ and $x\in X\setminus\Delta_f$. Then, $x\in X_i(f)$ if and only if $\xi(f,g;x)\in X_i(g)$.
\end{enumerate}
\noindent Using the properties {\bf(P1)}--{\bf(P4)} we can prove that the $\mathcal C^r$-piecewise comparison $\comp^r$ is a metric on $\mathcal{PC}^r_N(X)$. From {\bf (P3)}, it is easy to verify that $f=g$ if and only if $\comp^r(f,g)=0$, and symmetry is consequence of {\bf (P1)}--{\bf (P2)}. Finally, to conclude the triangle ine\-qua\-li\-ty we compute for $f,g,h\in\mathcal{PC}_N^r(X)$:
\setlength \arraycolsep{2pt}
\begin{eqnarray*}
\comp^r(f,g)&\leq&\sum_{i=1}^{N-1}|c_i^f-c_i^h|+ \sum_{i=1}^N\sum_{j=0}^r\,\max_{x\in\overline{X_i(f)}}\left|f_i ^{(j)}(x)-h_i^{(j)}\circ\, \xi_i(f,h;x)\right|\\[-.5ex]
&&\hspace*{.5cm}+\sum_{i=1}^{N-1}|c_i^h-c_i^g|+\sum_{i=1}^N\sum_{j=0}^r\,\max_{x\in\overline{X_i(f)}}\left|h_i^{(j)}\circ\,\xi_i(f,h;x)-g_i^{(j)}\circ\, \xi_i(f,g;x)\right|\\
&=&\comp^r(f,h)+\sum_{i=1}^{N-1}|c_i^h-c_i^g|\\[-.5ex]
&&\hspace*{2.6cm}+\sum_{i=1}^N\sum_{j=0}^r\,\max_{x\in\overline{X_i(h)}}\left|h_i^{(j)}(x)-g_i^{(j)}\circ\, \xi_i\big(f,g;\xi_i(h,f;x)\big)\right|\\
&=&\comp^r(f,h)+\comp^r(h,g),
\end{eqnarray*}
where we used {\bf(P4)}. Thus, we conclude that $\comp^r$ is a metric on $\mathcal{PC}^r_N(X)$. \\[-2ex]

\noindent In previous section we gave an example where we explained why near jump discontinuities the neighborhoods according to $\dist_\infty^r$ are {\em vertical}. A potential notion of stability is affected by the class of perturbations that are only vertical near jump discontinuities. Note that in Figure \ref{fig:SIMILAR.MAPS} we can see that each map $f_n$ is excluded from every (vertical) neighborhood of $f$. In this way, it is not possible to relate the concept of stability given by $\dist_\infty^r$ to the properties shared by the systems $f$ and $f_n$ with $n$ sufficiently large. Thus, we deduce that a metric on $\mathcal{PC}_N^r(X)$ that induces a reasonable stability concept (and a reasonable convergence) in the sense described above should admit perturbations of the concatenated points 
\[
\Big(c,\lim_{x\to c^-} f(x)\Big)\quad\text{and}\quad \Big(c,\lim_{x\to c^+} f(x)\Big)\qquad \forall\,c\in\Delta_f,
\]
in all directions as elements of the graph of $f$, which are contained in $X\times X$. We formalize this idea in the following result (Lemma \ref{PERT}), where we show that the perturbations induced by $\comp^r$ at each point of the graph of a PC-map span all possible directions.

\begin{lemma}\label{PERT}
Let $f\in\mathcal{PC}^0_N(X)$, $i\in\{1,\ldots,N\}$, $\epsilon>0$ (small enough) and $$(a,b)\;\in\;\left(c_i^f-\frac{\epsilon}{N},c_i^f+\frac{\epsilon}{N}\right)\times\left(\Lim{x\to [c_i^f]^-}f(x)-\frac{\epsilon}{N}\,,\Lim{x\to [c_i^f]^-}f(x)+\frac{\epsilon}{N}\right)\!.$$ Then, there exists $g\in\mathcal{PC}^0_N(X)$ such that $c_i^g=a$, $\Lim{x\to a^-}g(x)=b$ and $\comp^r(f,g)<\epsilon$.
\end{lemma}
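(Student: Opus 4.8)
\emph{Proof strategy.} The plan is to build $g$ from $f$ by relocating only the $i$-th breakpoint, from $c_i^f$ to $a$ while keeping every other breakpoint, transporting $f$ onto this reshaped partition via the canonical rescalings $\xi$, and then adding one small continuous correction near the new breakpoint so as to push the left-hand limit from $\ell:=\lim_{x\to[c_i^f]^-}f(x)=f_i(c_i^f)$ to $b$. Here $i\le N-1$, so that $c_i^f$ is an actual breakpoint; and we may assume $b\in X$ — which, when $\ell\in\Int X$, is automatic once $\epsilon$ is small, and is in any event necessary for a self-map of $X$ to have left-limit $b$ at $a$. I also read the hypothesis ``$\epsilon$ small enough'' as including that $\epsilon/N$ is less than the length of every continuity piece of $f$.

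First I would set the data of $g$: put $c_k^g:=c_k^f$ for $k\ne i$ and $c_i^g:=a$; by the choice of $\epsilon$ these still satisfy $c_1^g<\dots<c_{N-1}^g$, hence define a legitimate partition of $X$ with pieces $X_k(g)$, and $\overline{X_k(g)}=\overline{X_k(f)}$ for $k\notin\{i,i+1\}$. Next I would define $g$ piece by piece: for $k\ne i$, $g_k:=f_k\circ\xi_k(g,f;\cdot)$ on $\overline{X_k(g)}$ (by (P3) this is literally $f_k$ when $k\notin\{i,i+1\}$), and on $\overline{X_i(g)}=[c_{i-1}^f,a]$,
\[
g_i(y):=\rho\Big(f_i\big(\xi_i(g,f;y)\big)+\varphi(y)\,(b-\ell)\Big),
\]
where $\rho:\mathbb R\to X$ is the nearest-point retraction and $\varphi$ is the affine ramp on $[c_{i-1}^f,a]$ with $\varphi(c_{i-1}^f)=0$ and $\varphi(a)=1$. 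I would then record the routine facts: each $g_k$ is continuous on its closed piece and takes values in $X$, so $g\in\mathcal{PC}^0_N(X)$; and since $\xi_i(g,f;a)=c_i^f$ one gets $g_i(a)=\rho\big(\ell+(b-\ell)\big)=\rho(b)=b$, i.e. $c_i^g=a$ and $\lim_{x\to a^-}g(x)=g_i(a)=b$, exactly as required.

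The bound $\comp^0(f,g)<\epsilon$ (this is the quantity $\comp^r$ of the statement, which for $\mathcal C^0$ maps reduces to $\comp^0$) is then a short estimate. The breakpoint sum equals $|c_i^f-a|<\epsilon/N$. On every piece $k\ne i$, the fact that $\xi_k(g,f;\cdot)$ and $\xi_k(f,g;\cdot)$ are mutually inverse (property (P2), or (P4)) gives $g_k\circ\xi_k(f,g;\cdot)=f_k$ on $\overline{X_k(f)}$, so the contribution vanishes — immediately for $k\notin\{i,i+1\}$, and by the same cancellation for $k=i+1$. On the piece $k=i$, the same inversion identity, together with $\rho$ being $1$-Lipschitz and fixing the point $f_i(x)\in X$, yields
\[
\big|f_i(x)-g_i\big(\xi_i(f,g;x)\big)\big|\;\le\;\big|\varphi(\xi_i(f,g;x))\big|\,|b-\ell|\;\le\;|b-\ell|\;<\;\frac{\epsilon}{N}
\]
for every $x\in\overline{X_i(f)}$. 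Adding the at most two nonzero terms gives $\comp^0(f,g)<2\epsilon/N\le\epsilon$, since $N\ge2$.

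I expect the only genuine subtleties to be the bookkeeping with $\xi$ — that the transported copies of $f$ on the unchanged pieces contribute nothing and that the reshaped pieces $X_i$ and $X_{i+1}$ remain legitimate continuity pieces, which is precisely what properties (P1)--(P4) are designed to supply — and the requirement that the corrected map still land in $X$, which is the reason for composing with $\rho$ (and for assuming $b\in X$). Everything else is a direct computation.
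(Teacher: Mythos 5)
Your proposal is correct and follows essentially the same route as the paper: relocate the single breakpoint $c_i^f$ to $a$, transport $f$ onto the new partition via the piecewise linear homeomorphism $\xi$, and vertically adjust the $i$-th piece to move the left-hand limit to $b$. The only differences are cosmetic refinements — you use an affine ramp rather than the paper's constant shift $-\ell+b$, and you add the retraction $\rho$ to keep the values in $X$ — details the paper subsumes under ``it is easy to verify.''
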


\begin{proof}
Let $\xi:X\to X$ be the piecewise linear homeomorphism that linearly deforms the continuity pieces induced by $\Delta_f$ into the continuity pieces induced by $\Delta^*:=\big(\Delta_f\setminus\{c_i^f\}\big)\cup\{a\}$, and consider $g\in\mathcal{PC}^r_N(X)$ with $\Delta_g=\Delta^*$ defined by
\[
g(y):=\left\{\begin{array}{ll}
    \displaystyle f\circ\xi^{-1}(y) -\Lim{x\to [c_i^f]^-}f(x)+b &\quad \text{if $y\in X_i(f)\,$;}   \\[1ex]
     f\circ\xi^{-1}(y) & \quad \text{if $y\in X_j(f)$ with $j\neq i$.}
\end{array}\right.
\]
It is easy to verify that $g$ satisfies the required conditions.
\end{proof}




\begin{remark}\em\label{EQUAL.DELTA}
For $\epsilon>0$ and $f,g\in\mathcal{PC}_N^r(X)$ such that $\Delta_f=\Delta_g$, we have
\begin{equation*}
    \dist_\infty^r(f,g)<\epsilon \qquad\Leftrightarrow\qquad \comp^r(f,g)<N\epsilon.
\end{equation*}
Thus, both metrics are equivalent when restricted to subspaces $\mathcal F_\Delta$ of maps that share the same borders of continuity pieces, which are given by some subset $\Delta\subset\Int(X)$ with $\#\Delta=N-1$. Thus, we deduce that the pair $\big(\mathcal F_\Delta,\comp^r\!\big)$ is a complete metric space.
\end{remark}

\noindent Similarly to $\big(\mathcal{PC}_N^r(X),\dist_\infty^r\big)$, the metric space $\big(\mathcal{PC}_N^r(X),\comp^r\!\big)$ is not complete.

\begin{example}\label{NOT.CONVERGENT}\em 
Consider the sequence $\{f_n\}_{n\geq2}\subset\mathcal{PC}^{0}_2\big([0,1]\big)$ given by
\[
f_n(x):=\left\{\begin{array}{ll}
    \frac{x}{2} & \quad \text{if $\,x\in\big[0,\frac{1}{n}\big)$,} \\[1ex]
    \frac{x}{2}+\frac{1}{2} & \quad \text{if $\,x\in \big(\frac{1}{n},1\big]$.}
\end{array}\right.
\]
For each $n,m\geq0$, the segments $\xi_1(f_n,f_m;\cdot)$ and $\xi_2(f_n,f_m;\cdot)$ are given by
\begin{eqnarray*}
\xi_1(f_n,f_m;x_1)=\frac{n}{m}x_1\qquad\text{and}\qquad \xi_2(f_n,f_m;x_2)=\frac{(m-1)n}{(n-1)m}x_2+\frac{n-m}{(n-1)m},
\end{eqnarray*}
for every $x_1\in\big[0,1/(n+2)\big]$ and $x_2\in\big[1/(n+2),1\big]$, respectively. Thus, we have
\setlength \arraycolsep{2pt}
\begin{eqnarray*}
\comp^0(f_n,f_m)&=&\left|\frac{1}{n}-\frac{1}{m}\right|+\max_{x\in[0,1/n]}\left|\frac{x}{2}-\frac{\xi_1(f_n,f_m;x)}{2}\right|+\max_{x\in[1/n,1]}\left|\frac{x}{2}-\frac{\xi_2(f_n,f_m;x)}{2}\right|\\
&\leq& \frac{5}{2(n-1)}+\frac{5}{2m}+\frac{1}{(n-1)m},
\end{eqnarray*}
which proves that $\{f_n\}_{n\geq2}$ is a Cauchy sequence w.r.t. $\comp^0$. However, the limit corresponds to a map such that one of its pieces of continuity has an empty interior, which does not belong to $\mathcal{PC}^0_2\big([0,1]\big)$.
\end{example}

\noindent Note that the sequence $\{f_n\}_{n\geq2}$ of Example \ref{NOT.CONVERGENT} is not a Cauchy sequence w.r.t. $\dist^0_\infty$. In fact, for any $n,m\geq2$ such that $n>m$ we have
\setlength \arraycolsep{2pt}
\begin{eqnarray*}
\dist^0_\infty(f_n,f_m)&=&\left|\frac{1}{n}-\frac{1}{m}\right|+\sup\big\{\big|f_n(x)-f_m(x)\big|\ :\ x\in X\setminus\{1/n,1/m\}\big\}\geq \frac{1}{2}.
\end{eqnarray*}
This allows us to deduce that the topologies induced by $\dist_\infty^r$ and $\comp^r$ are different.

\section{Characterization of convergent Cauchy sequences}\label{sec3}

In the present section we prove Theorem \ref{COLLAPSE}, which is a simple and useful characterization of the convergent Cauchy sequences in the metric space $\big(\mathcal{PC}^r_N(X),\comp^r\big)$. The method used to prove it consists in ``decoupling the problem''; that is, working with a space of continuous functions with variable compact domain. 


Recall that $X$ denotes a compact interval with a non-empty interior. Let $\mathbb I(X)$ be the collection of all compact intervals with non-empty interior contained in $X$, and for every $W\subset\mathbb R$ a compact interval (with non-empty interior) or the entire real line $\mathbb R$, let
\[
\mathbb{FI}(X;W):=\big\{(f,I)\ :\ f:I\to W \,\text{ is a continuous map with}\ I\in\mathbb I(X)\big\}.
\]
For convenience, we understand $\mathbb{FI}(X;W)$ as a space of functions and not as ordered pairs. Define the metric $\chi:\mathbb{FI}(X;W)\times\mathbb{FI}(X;W)\to[0,\infty)$ according to
\[
\chi(f,g):=\dist_{\mathcal H}(I_f,I_g)+\max_{x\in I_f}\big|f(x)-g\circ\xi(f,g;x)\big|\qquad\forall\,f,g\in\mathbb{FI}(X;W),
\]
where $I_f,I_g\in\mathbb I(X)$ denote respectively the intervals of definition of $f$ and $g$, $\xi(f,g;\cdot):I_f\to I_g$ is the increasing map that linearly deforms $I_f$ onto $I_g$ \big(as in the previous section, $\xi$ satisfies properties analogous to {\bf (P1)}--{\bf (P5)}\big) and $\dist_{\mathcal H}$ denotes the Hausdorff metric on $\mathbb I(X)$ given by
\[
\dist_{\mathcal H}(I,J):=\max\Big\{\!\max_{x\in I}\min_{y\in J}|x-y|,\max_{y\in J}\min_{x\in I}|x-y|\Big\}\qquad\forall\,I,J\in\mathbb I(X).
\]
By using an argument similar to the one given in the proof that $\comp^r$ is a metric, we can conclude $\chi$ is a metric on $\mathbb{FI}(X;W)$. If $\mathbb G(X)$ denotes the collection of all compact nonempty subsets of $X$, then $\mathbb G(X)$ endowed with the Hausdorff distance is a complete metric space (see \cite{H99}). Since $\mathbb J(X):=\mathbb I(X)\cup\big\{\{x\}\ :\ x\in X\big\}$ is the closure of $\mathbb I(X)$ in $(\mathbb G(X),\dist_{\mathcal H})$, we have that $(\mathbb J(X),\dist_{\mathcal H})$ defines a complete metric space too. This last result will be required to prove the following:

\begin{lemma}\label{FIX}
Let $\{f_n\}_{n\in\mathbb N}$ be a Cauchy sequence in $\big(\mathbb{FI}(X;W),\chi\big)$ such that $I_{f_n}\to I$ \big(w.r.t. $\dist_{\mathcal H}$\big) for some $I\in\mathbb I(X)$. Then, $f_n\to f$ for some $f\in\mathbb{FI}(X;W)$.
\end{lemma}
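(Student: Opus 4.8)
The plan is to ``decouple'' interval-convergence from function-convergence by transporting every $f_n$ to the common limit domain $I$. Since $I\in\mathbb I(X)$ and each $I_{f_n}\in\mathbb I(X)$ are nondegenerate compact intervals, there is a unique increasing affine homeomorphism $\psi_n\colon I\to I_{f_n}$; set $\tilde f_n:=f_n\circ\psi_n$, a continuous map $I\to W$. The elementary fact driving everything is that an increasing affine homeomorphism between two prescribed nondegenerate compact intervals is unique, which forces
\[
\xi(f_n,f_m;\cdot)\circ\psi_n=\psi_m\qquad\text{and}\qquad \xi(f_n,f;\cdot)=\psi_n^{-1}
\]
for all $n,m$ (in the first identity both sides are increasing affine homeomorphisms of $I$ onto $I_{f_m}$). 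Performing the change of variables $x=\psi_n(t)$ in the definition of $\chi$ then gives
\[
\chi(f_n,f_m)=\dist_{\mathcal H}(I_{f_n},I_{f_m})+\max_{t\in I}\big|\tilde f_n(t)-\tilde f_m(t)\big|,
\]
so $\|\tilde f_n-\tilde f_m\|_\infty\le\chi(f_n,f_m)$ and $\{\tilde f_n\}$ is uniformly Cauchy on the fixed compact set $I$.

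Next I would invoke completeness of $\big(C(I,W),\|\cdot\|_\infty\big)$: since $W$ is closed in $\mathbb R$ (either a compact interval or $\mathbb R$ itself) and $I$ is compact, a uniform limit of continuous $W$-valued maps on $I$ is again continuous and $W$-valued. Hence $\tilde f_n\to\tilde f$ uniformly for some $\tilde f\in C(I,W)$, and I would declare $f$ to be the element of $\mathbb{FI}(X;W)$ given by $\tilde f$ with domain $I_f:=I$ — legitimate precisely because $I$ has nonempty interior.

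It remains to check $\chi(f_n,f)\to0$. The term $\dist_{\mathcal H}(I_{f_n},I)$ tends to $0$ by hypothesis. For the other term, the identity $\xi(f_n,f;\cdot)=\psi_n^{-1}\colon I_{f_n}\to I$ and the substitution $x=\psi_n(t)$ turn $\max_{x\in I_{f_n}}\big|f_n(x)-f\circ\xi(f_n,f;x)\big|$ into $\max_{t\in I}\big|\tilde f_n(t)-\tilde f(t)\big|=\|\tilde f_n-\tilde f\|_\infty\to0$. Therefore $f_n\to f$ in $\big(\mathbb{FI}(X;W),\chi\big)$. (Consistently with the preceding discussion, one also notes $\dist_{\mathcal H}(I_{f_n},I_{f_m})\le\chi(f_n,f_m)$, so a $\chi$-Cauchy $\{f_n\}$ automatically has $\dist_{\mathcal H}$-Cauchy domains; completeness of $(\mathbb J(X),\dist_{\mathcal H})$ guarantees these converge in $\mathbb J(X)$, and the hypothesis of the lemma is exactly that the limit lies in $\mathbb I(X)$.)

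The only genuinely load-bearing step is the pair of change-of-variables identities for $\xi$, which reduce the $\chi$-Cauchy condition to an ordinary uniform-Cauchy condition on the fixed domain $I$; after that the conclusion is just completeness of a space of continuous functions on a compact interval. The nondegeneracy of the limit $I$ is used crucially: if $I$ were a single point, the $\tilde f_n$ would not be defined on a common nondegenerate interval and both the argument and the statement would fail, which is precisely the obstruction exhibited by Example \ref{NOT.CONVERGENT}.
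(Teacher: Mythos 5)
Your proof is correct and is essentially the paper's argument in streamlined form: your $\psi_n$ is exactly the paper's $s_n\circ\xi_0$ (by property {\bf (P4)}, $s_n=\xi(f_0,f_n;\cdot)$), and both proofs reduce the $\chi$-Cauchy condition to a uniform Cauchy condition on the fixed nondegenerate limit interval $I$. The only difference is presentational — you invoke completeness of $C(I,W)$ as a black box where the paper reproves it (pointwise limit, then uniform convergence, then continuity) — so no further comment is needed.
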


\begin{proof}
Since $\{f_n\}_{n\in\mathbb N}$ is a Cauchy sequence, for all $\epsilon>0$ there exists $n_\epsilon\geq0$ such that
\vspace*{-.05cm}

\noindent\begin{minipage}[b]{.72\textwidth}
\begin{eqnarray}
\chi(f_n,f_m)<\epsilon\qquad\forall\,n,m\geq n_\epsilon.\label{CHI.FNFM}
\end{eqnarray}
First, we define the sequence $\{s_n(x)\}_{n\in\mathbb N}$. For every $x\in I_{f_0}$ let $s_0(x):=x$ and
\[
s_n(x):=\xi\big(f_{n-1},f_{n};s_{n-1}(x)\big)\in I_{f_n}\qquad \forall\,n\geq1.
\]
Using the recursive definition of $s_n(x)$ and the analogue to {\bf (P4)} for $\xi$, we deduce that
\begin{equation}
 s_n(x)=\xi\big(f_m,f_n;s_m(x)\big)\qquad\forall\,x\in I_{f_0}\,,\;\forall\,n,m\in\mathbb N.\label{RECURSIVE.SN}
\end{equation}
Then, to prove that $\{f_n\}_{n\in\mathbb N}$ converges in $\mathbb{FI}(X;W)$, we divide this proof into 2 steps.

\vspace*{5ex}
\end{minipage}\hspace*{1ex}\begin{minipage}[b]{.3\textwidth}
\makeatletter
\def\@captype{figure}
\makeatother
  \centering
\tikzset{every picture/.style={line width=0.75pt}} 
\begin{tikzpicture}[x=0.75pt,y=0.75pt,yscale=-1,xscale=1,scale=.8]
\draw [color={rgb, 255:red, 0; green, 0; blue, 0 }  ,draw opacity=1 ][line width=1.5]    (239.39,48.97) -- (239.39,54.22) ;
\draw [line width=1.5]    (182.83,51.39) -- (239.17,51.39) ;
\draw  [dash pattern={on 0.84pt off 2.51pt}]  (183.33,51.39) -- (183.33,259.89) ;
\draw [line width=1.5]    (183.33,110.89) -- (258.67,110.89) ;
\draw [line width=1.5]    (183.33,170.89) -- (283.28,170.89) ;
\draw [color={rgb, 255:red, 155; green, 155; blue, 155 }  ,draw opacity=1 ][line width=1.5]    (183.33,259.89) -- (289.17,259.89) ;
\draw [color={rgb, 255:red, 0; green, 0; blue, 0 }  ,draw opacity=1 ][line width=1.5]    (259.39,108.47) -- (259.39,113.72) ;
\draw [color={rgb, 255:red, 0; green, 0; blue, 0 }  ,draw opacity=1 ][line width=1.5]    (283.39,168.47) -- (283.39,173.72) ;
\draw [color={rgb, 255:red, 155; green, 155; blue, 155 }  ,draw opacity=1 ][line width=1.5]    (289.99,256.67) -- (289.99,261.92) ;
\draw  [fill={rgb, 255:red, 255; green, 255; blue, 255 }  ,fill opacity=1 ] (192.75,51.29) .. controls (192.75,50.12) and (193.7,49.17) .. (194.88,49.17) .. controls (196.05,49.17) and (197,50.12) .. (197,51.29) .. controls (197,52.47) and (196.05,53.42) .. (194.88,53.42) .. controls (193.7,53.42) and (192.75,52.47) .. (192.75,51.29) -- cycle ;
\draw  [fill={rgb, 255:red, 255; green, 255; blue, 255 }  ,fill opacity=1 ] (203.75,110.96) .. controls (203.75,109.78) and (204.7,108.83) .. (205.88,108.83) .. controls (207.05,108.83) and (208,109.78) .. (208,110.96) .. controls (208,112.13) and (207.05,113.08) .. (205.88,113.08) .. controls (204.7,113.08) and (203.75,112.13) .. (203.75,110.96) -- cycle ;
\draw  [fill={rgb, 255:red, 255; green, 255; blue, 255 }  ,fill opacity=1 ] (215.42,170.96) .. controls (215.42,169.78) and (216.37,168.83) .. (217.54,168.83) .. controls (218.72,168.83) and (219.67,169.78) .. (219.67,170.96) .. controls (219.67,172.13) and (218.72,173.08) .. (217.54,173.08) .. controls (216.37,173.08) and (215.42,172.13) .. (215.42,170.96) -- cycle ;
\draw    (192.89,57.5) .. controls (188.66,67.25) and (186.97,81.12) .. (201.1,102.2) ;
\draw [shift={(202.22,103.83)}, rotate = 235.12] [color={rgb, 255:red, 0; green, 0; blue, 0 }  ][line width=0.75]    (10.93,-3.29) .. controls (6.95,-1.4) and (3.31,-0.3) .. (0,0) .. controls (3.31,0.3) and (6.95,1.4) .. (10.93,3.29)   ;
\draw    (204.56,117.83) .. controls (203.26,128.34) and (198.79,141.49) .. (212.78,162.53) ;
\draw [shift={(213.89,164.17)}, rotate = 235.12] [color={rgb, 255:red, 0; green, 0; blue, 0 }  ][line width=0.75]    (10.93,-3.29) .. controls (6.95,-1.4) and (3.31,-0.3) .. (0,0) .. controls (3.31,0.3) and (6.95,1.4) .. (10.93,3.29)   ;

\draw (160.83,53.83) node  [font=\normalsize]  {$I_{f_{0}}$};
\draw (223.67,221.5) node    {$\vdots $};
\draw (160.5,113.5) node  [font=\normalsize]  {$I_{f_{1}}$};
\draw (161.17,172.5) node  [font=\normalsize]  {$I_{f_{2}}$};
\draw (165.5,260.83) node  [font=\normalsize]  {$I$};
\draw (194.67,39.17) node    {$x$};
\draw (226,119.83) node    {$s_{1}(x)$};
\draw (237.33,180.17) node    {$s_{2}(x)$};
\draw (165.5,300.83) node  [font=\normalsize]  {$\,$};
\end{tikzpicture}

\end{minipage}

\vspace*{-3ex}
\noindent{\em Step 1.} 
We claim that for all $x\in I_{f_0}$, $\big\{f_n(s_n(x))\big\}_{n\in\mathbb N}$ is a Cauchy sequence in $W$ (w.r.t. the Euclidean metric). 

\noindent By \eqref{RECURSIVE.SN} and the analogue of {\bf (P4)} for $\xi$, we have
\[
f_m\circ\xi\big(f_n,f_m;s_n(x)\big)= f_m\big(s_m(x)\big)\qquad\forall\,x\in I_{f_0}\,,\;\,\forall\,n,m\in\mathbb N.\quad
\]
This last equation together to \eqref{CHI.FNFM} imply
\setlength \arraycolsep{2pt}
\begin{eqnarray}
\big|f_n\big(s_n(x)\big)-f_m\big(s_m(x)\big)\big|&=&\big|f_n\big(s_n(x)\big)-f_m\circ\xi\big(f_n,f_m;s_n(x)\big)\big| \nonumber \\
&\leq& \max_{y\,\in\, I_{f_n}}\big|f_n(y)-f_m\circ\xi(f_n,f_m;y)\big|<\epsilon,\label{TRUCO}
\end{eqnarray}
for every $x\in I_{f_0}$ and $n,m\geq n_\epsilon$. This implies that $\big\{f_n(s_n(x))\big\}_{n\in\mathbb N}\subset W$ is a Cauchy sequence and, by completeness of $W$, that the limit $\Lim{n\to\infty}f_n\big(s_n(x)\big)\in W$ exists for every $x\in I_{f_0}$. \\[-1ex]

\noindent{\em Step 2.} Let $I\in\mathbb I(X)$ be such that $I_{f_n}\to I$ w.r.t. the Hausdorff metric. We claim that the sequence $\{f_{n}\}_{n\in\mathbb N}$ converges (w.r.t. $\chi$) to $f:I\to W$ given by
\[
f(y):=\lim_{n\to\infty}f_n\big(s_n\circ\xi_0(y)\big)\qquad\forall\,y\in I,
\]
where $\xi_0:I\to I_{f_0}$ is the increasing map that linearly deforms $I$ onto $I_{f_0}$ (note that $I$ is non-degenerate!). By Step 1, for every $x\in I_{f_0}$ the sequence $\big\{f_n(s_n(x))\big\}_{n\in\mathbb N}$ is convergent in $W$, so that $f$ is well defined. It remains to prove that $f$ is continuous. Using that $\xi_0$ is an homeomorphism from $I$ onto $I_{f_0}$ and taking the limit $m\to\infty$ in \eqref{TRUCO}, we obtain
\begin{eqnarray}
\big|f_n\big(s_n(x)\big)-f\big(\xi_0^{-1}(x)\big)\big|=\lim_{m\to\infty}\big|f_n\big(s_n(x)\big)-f_m\big(s_m\circ\xi_0\circ\xi_0^{-1}(x)\big)\big|<\epsilon,\label{XI0}
\end{eqnarray}
for every $x\in I_{f_0}$ and $n\geq n_\epsilon$. Thus, we deduce that $f_n\circ s_n\circ \xi_0: I\to W$ converges uniformly to $f$ as $n\to\infty$. Since $s_n=\xi(f_{0},f_n;\cdot):I_{f_0}\to I_{f_n}$ is continuous, the composition $f_n\circ s_n\circ\xi_0: I\to W$ is uniformly continuous for all $n\in\mathbb N$, which implies that $f$ is continuous, and hence that $f\in\mathbb{FI}(X;W)$. Finally, note that for any $n\geq n_\epsilon$,
\setlength \arraycolsep{2pt}
\begin{eqnarray*}
\chi(f,f_n)=\max_{y\,\in\, I}\big|f(y)-f_n\circ \xi(f,f_n;y)\big|&=&\max_{x\,\in\, I_{f_0}}\big|f\circ \xi_0^{-1}(x)-f_n\circ \xi\big(f,f_n;\xi_0^{-1}(x)\big)\big|\\
&=&\max_{x\,\in\, I_{f_0}}\big|f\circ \xi_0^{-1}(x)-f_n\circ \xi\big(f_0,f_n;x\big)\big|\\
&\stackrel{\eqref{RECURSIVE.SN}}=& \max_{x\,\in\, I_{f_0}}\big|f\circ \xi_0^{-1}(x)-f_n\big(s_n(x)\big)\big|\stackrel{\eqref{XI0}}<\epsilon,
\end{eqnarray*}
which ends the proof.
\end{proof}

\begin{lemma}\label{DERIVATIVE}
Let $\{f_n\}_{n\in\mathbb N}$ be a Cauchy sequence in $\big(\mathbb{FI}(X;W),\chi\big)$ such that $f_n:I_{f_n}\to W$ is of class $\mathcal C^1$ for every $n\in\mathbb N$, $\{f_n'\}_{n\in\mathbb N}$ is a Cauchy sequence in $\big(\mathbb{FI}(X;\mathbb R),\chi\big)$ and $I_{f_n}$ converges to $I\in\mathbb I(X)$ w.r.t. the Hausdorff metric. Then, $f_n\to f\in\mathbb{FI}(X;W)\,$ and $\,f_n'\to g\in\mathbb{FI}(X;\mathbb R)$ implies that $f:I\to W$ is of class $\mathcal C^1$ and $f'=g$.
\end{lemma}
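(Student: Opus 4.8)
\textbf{Proof plan for Lemma \ref{DERIVATIVE}.}

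The plan is to transport everything to the fixed interval $I$ via the affine homeomorphisms $\xi_0$ introduced in the proof of Lemma \ref{FIX}, so that the ``variable domain'' difficulty disappears and we are reduced to the classical theorem: if $F_n\to F$ pointwise and $F_n'\to G$ uniformly on a fixed interval, then $F$ is $\mathcal C^1$ and $F'=G$. First I would apply Lemma \ref{FIX} twice: once to $\{f_n\}$ to get the limit $f\in\mathbb{FI}(X;W)$ with domain $I$, and once to $\{f_n'\}$ to get a limit which, again by Lemma \ref{FIX}, is a continuous map on $I$ — call it $g$ (the hypothesis already names it). Let $\xi_0^{(n)}:=\xi(f,f_n;\cdot):I\to I_{f_n}$ be the affine homeomorphism deforming $I$ onto $I_{f_n}$, and set $F_n:=f_n\circ\xi_0^{(n)}:I\to W$. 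Because $\xi_0^{(n)}$ is affine with slope $|I_{f_n}|/|I|$, the chain rule gives $F_n'(y)=\frac{|I_{f_n}|}{|I|}\,f_n'\big(\xi_0^{(n)}(y)\big)=\frac{|I_{f_n}|}{|I|}\,\widetilde{f_n'}(y)$, where $\widetilde{f_n'}:=f_n'\circ\xi(g,f_n';\cdot)$ is the transport of $f_n'$ to $I$ — note $\xi(g,f_n';\cdot)$ and $\xi(f,f_n;\cdot)$ are the \emph{same} affine map since $f_n'$ and $f_n$ share the domain $I_{f_n}$ and $g,f$ share the domain $I$.

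Next I would establish the two convergences on the fixed interval $I$. From the definition of $\chi$ and the computation at the end of Lemma \ref{FIX} (the displayed chain ending with $\chi(f,f_n)<\epsilon$), one reads off directly that $F_n\to f$ uniformly on $I$: indeed $\max_{y\in I}|f(y)-F_n(y)|=\max_{y\in I}|f(y)-f_n\circ\xi(f,f_n;y)|=\chi(f,f_n)-\dist_{\mathcal H}(I,I_{f_n})\to 0$. The identical argument applied to the Cauchy sequence $\{f_n'\}$ gives $\widetilde{f_n'}\to g$ uniformly on $I$. Since $\dist_{\mathcal H}(I,I_{f_n})\to0$ forces $|I_{f_n}|\to|I|$, the scalar factors $\frac{|I_{f_n}|}{|I|}\to 1$, so $F_n'=\frac{|I_{f_n}|}{|I|}\widetilde{f_n'}\to g$ uniformly on $I$ as well. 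We also have $F_n\to f$ pointwise (indeed uniformly) on $I$.

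Now the classical result applies verbatim on the fixed compact interval $I$: $F_n$ is $\mathcal C^1$, $F_n\to f$ pointwise, $F_n'\to g$ uniformly, hence $f$ is $\mathcal C^1$ on $I$ with $f'=g$. Finally I would untangle the transport to conclude the statement as phrased: $f=f_n\circ\xi(f,f_n;\cdot)$ and $f_n\in\mathbb{FI}(X;W)$ combined with $f$ being $\mathcal C^1$ on $I$ shows $f\in\mathbb{FI}(X;W)$ is $\mathcal C^1$; and since $\xi(g,f_n';\cdot)=\xi(f,f_n;\cdot)$ for all $n$ and $g=\lim f_n'\circ\xi(g,f_n';\cdot)$, the equation $f'=g$ is exactly $f'=\lim_n f_n'$, the desired identity (interpreted, as throughout, modulo the affine identifications of the domains). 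I expect the only genuinely delicate point to be bookkeeping: one must check carefully that the affine reparametrization $\xi(f,f_n;\cdot)$ used to transport $f_n$ coincides with $\xi(g,f_n';\cdot)$ used to transport $f_n'$ — this is immediate because $I_{f_n}$ is the common domain of $f_n$ and $f_n'$ and $I$ is the common domain of $f$ and $g$, and $\xi$ depends only on the two domains — and that the slope factor $|I_{f_n}|/|I|$, produced by the chain rule, tends to $1$; neither is hard, but omitting either would break the argument. The rest is a routine reduction to a standard calculus theorem.
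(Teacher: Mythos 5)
Your proof is correct, and it reaches the conclusion by a genuinely different (and cleaner) route than the paper's. The paper works on the moving domains: it proves the identity $f(y)-f(a)=\int_a^y g(t)\,dt$ by computing $\int_{a_n}^{\xi(f,f_n;y)}f_n'(t)\,dt$ and $\int_{a_n}^{\xi(f,f_n;y)}g\circ\xi(f_n,g;t)\,dt$ separately (each via a change of variables back to $I$, using that the constant slope $\xi'(f,f_n;\cdot)$ tends to $1$), shows the two limits agree because $\chi(f_n',g)\to0$, and then invokes the Fundamental Theorem of Calculus. You instead pull everything back to the fixed interval $I$ first, so that the problem reduces to the textbook statement that $F_n\to F$ at a point together with $F_n'\to G$ uniformly on a compact interval imply $F\in\mathcal C^1$ with $F'=G$. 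The two observations you rightly flag as the delicate points --- that $\xi(g,f_n';\cdot)$ and $\xi(f,f_n;\cdot)$ coincide because $\xi$ depends only on the two domains, and that the chain-rule factor $|I_{f_n}|/|I|$ tends to $1$ --- are precisely the analogues of the paper's use of property {\bf(P4)} and of $\xi'(f,f_n;\cdot)\to1$, so the mathematical content is the same; your packaging is more modular and delegates the FTC bookkeeping to a standard theorem. One slip in your closing paragraph: $f$ is not \emph{equal} to $f_n\circ\xi(f,f_n;\cdot)$ for any fixed $n$, only the uniform limit of these maps. Nothing depends on that equality, however, since the classical theorem has already delivered $f\in\mathcal C^1(I)$ and $f'=g$, which is exactly the assertion of the lemma.
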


\begin{proof}
From Lemma \ref{FIX}, the limits $f:I\to W$ and $g:I\to\mathbb R$ exist and are unique in $\mathbb{FI}(X;W)$ and $\mathbb{FI}(X;\mathbb R)$ for the respective sequences. Our goal will be to prove that:
\begin{equation}
f(y)-f(a)=\int_a^{y}g(t)\,dt\qquad\forall\,y\in I:=[a,b].\label{TFC.FINAL}
\end{equation}
Thus, using that $f$ is bounded and continuous on $I$, the conclusion will follow from the Fundamental Theorem of Calculus. 

As in the proof of Lemma \ref{FIX}, for the Cauchy sequence $\{f_n\}_{n\in\mathbb N}$ we consider the sequences $\big\{s_n(x)\big\}_{n\in\mathbb N}$ for $x\in I_{f_0}$, and the increasing linear map $\xi_0:=\xi(f,f_0;\cdot):I\to I_{f_0}$.\\[-2ex]

\noindent First, we claim that if $I_{f_n}=[a_n,b_n]$ for every $n\in\mathbb N$ and $I=[a,b]$, then the limits
\[
\lim_{n\to\infty}\int_{a_n}^{\xi(f,f_n;y)} g\circ\xi(f_n,g;t)\,dt \qquad\text{and}\qquad \lim_{n\to\infty}\int_{a_n}^{\xi(f,f_n;y)} f_n'(t)\,dt
\]
exist for every $y\in I$. First, note that for any $n\in\mathbb N$ and $y\in I$ we have
\setlength \arraycolsep{2pt}
\begin{eqnarray}
\int_{a_n}^{\xi(f,f_n;y)} g\circ\xi(f_n,g;t)\,dt&=&\int_{\xi(f,f_n;a)}^{\xi(f,f_n;y)} g\circ\xi(f_n,g;t)\,dt \nonumber\\
&=&\int_a^y g\circ\xi\big(f,g;t\big)\,\xi'\big(f,f_n;t\big)\,dt \nonumber \\
&=&\int_a^y g(t)\,\xi'\big(f,f_n;t\big)\,dt,\label{EQUALITY.LIMIT}
\end{eqnarray}
where we use $\xi(f,g;\cdot)=\xi(g,f;\cdot)=\id$. Besides, for all $n\in\mathbb N$ the map $\xi'\big(f,f_n;t\big)$ is a constant function such that $\xi'\big(f,f_n;t\big)\to 1$ uniformly as $n\to\infty$, since $I_{f_n}\to I$. Thus, from \eqref{EQUALITY.LIMIT} and since $g(t)\,\xi'\big(f,f_n;t\big)$ converges uniformly to $g(t)$ on $[a,y]$, we deduce that
\begin{equation}
\lim_{n\to\infty}\int_{a_n}^{\xi(f,f_n;y)} g\circ\xi(f_n,g;t)\,dt=\int_a^y g(t)\,dt\qquad \forall\,y\in I,\label{LIMITE1}
\end{equation}
and we conclude that the first limit exists. In the second case, for all $n\in\mathbb N$ and $y\in I$ we have that
\setlength \arraycolsep{2pt}
\begin{eqnarray*}
\int_{a_n}^{\xi(f,f_n;y)} f_n'(t)\,dt= \int_{\xi(f,f_n;a)}^{\xi(f,f_n;y)}f_n'(t)\,dt&=&\int_a^y f_n'\big(\xi(f,f_n;t)\big)\,\xi'(f,f_n;t)\,dt\\
&=& f_n\circ\xi(f,f_n;y)-f_n\circ\xi(f,f_n;a). 
\end{eqnarray*}
Note that from \eqref{RECURSIVE.SN} we obtain
\[
f_n\circ\xi(f,f_n;y) = f_n\circ\xi\big(f_0,f_n;\xi_0(y)\big) = f_n\circ s_n\circ \xi_0(y)\qquad\forall\,n\in\mathbb N\,,\;\forall\,y\in I,
\]
and we conclude that 
\begin{equation}
\lim_{n\to\infty}\int_{a_n}^{\xi(f,f_n;y)} f_n'(t)\,dt=\lim_{n\to\infty}\big(f_n\circ s_n\circ \xi_0(y)-f_n\circ s_n\circ \xi_0(a)\big)=f(y)-f(a), \label{LIMITE2}
\end{equation}
for every $y\in I$. Finally, we claim that the limits \eqref{LIMITE1} and \eqref{LIMITE2} coincide. Indeed, since $\{f_n'\}_{n\in\mathbb N}$ is a Cauchy sequence converging to $g$, for all $\epsilon>0$ there exists $n_\epsilon\in\mathbb N$ such that 
\begin{equation}
\max_{y\,\in\, I_{f_n}}\big|f_n'(y)-g\circ\xi(f_n,g;y)\big|<\frac{\epsilon}{c_N-c_0}\qquad\forall\,n\geq n_\epsilon,\label{MAXFPRIMA}
\end{equation}
where $I_{f_n}\subset X=[c_0,c_N]$ for every $n\in\mathbb N$. Also, note that for any $n\geq n_\epsilon$ and $y\in I$ we have
\setlength \arraycolsep{2pt}
\begin{eqnarray}
\left|\int_{a_n}^{\xi(f,f_n;y)} f_n'(t)\,dt-\int_{a_n}^{\xi(f,f_n;y)} g\circ\xi(f_n,g;t)\,dt\right|&& \nonumber \\
&&\hspace*{-3.5cm}\leq\int_{a_n}^{\xi(f,f_n;y)} \big|f_n'(t)- g\circ \xi(f_n,g;t)\big|\,dt \stackrel{\eqref{MAXFPRIMA}}\leq\epsilon, \label{LESS.EPSILON}
\end{eqnarray}
which implies that \eqref{LIMITE1} and \eqref{LIMITE2} are equals, which proves \eqref{TFC.FINAL} and concludes the proof.
\end{proof}

\begin{remark}\em\label{REMARK1} 
Given $f\in\mathcal{PC}^r_N(X)$, note that $f_i:\overline{X_i(f)}\to X\in\mathbb{FI}(X;X)$ and $f_i^{(j)}:\overline{X_i(f)}\to \mathbb R\in\mathbb{FI}(X;\mathbb R)$ for every $i\in\{1,\ldots,N\}$ and $j\in\{1,\ldots,r\}$.
\end{remark}

Now, we are going to answer the following question: could it be that the collapse of continuity pieces is the only obstacle to the convergence of Cauchy sequences in $\mathcal{PC}^r_N(X)$? Theorem \ref{COLLAPSE} below we answers to this question in the affirmative. 

\begin{definition}\label{DEF.COLLAPSE}
Given a sequence $\underline{s}:=\{f_n\}_{n\in\mathbb N}\subset\mathcal{PC}_N^r(X)$, we define the {\em collapse index} of $\underline{s}$ by
\[
\kappa(\underline{s}):=\#\Big\{1\leq i\leq N\ :\ \lim_{n\to\infty}\big|c_{i-1}^{f_n}-c_i^{f_n}\big|=0\Big\}.
\]
\end{definition}
The collapse index $\kappa(\underline{s})$ measures the number of continuity pieces collapsing to a single point when $n\to\infty$. Also, $0\leq\kappa(\underline{s})\leq N-1$ for every sequence $\underline{s}$. Note that the collapse index of the Cauchy sequence in Example \ref{NOT.CONVERGENT} is 1.

\begin{theorem}\label{COLLAPSE}
Let $\underline{s}$ be a Cauchy sequence in $\big(\mathcal{PC}^r_N(X),\comp^r\!\big)$. Then, $\underline{s}$ is convergent if and only if $\kappa(\underline{s})=0$. In particular, any Cauchy sequence $\{f_n\}_{n\in\mathbb N}\subset \mathcal{PC}^r_N(X)$ such that $c_i^{f_n}=c_i^{f_{n+1}}$ for every $n\geq0$ and $i\in\{1,\ldots,N-1\}$ is convergent.
\end{theorem}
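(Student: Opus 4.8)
The plan is to prove the two implications of the equivalence separately and then observe that the ``in particular'' statement follows immediately. The direction ``$\underline{s}$ convergent $\Rightarrow\kappa(\underline{s})=0$'' is the easy one: if $f_n\to f$ in $\big(\mathcal{PC}_N^r(X),\comp^r\big)$ with $f\in\mathcal{PC}_N^r(X)$, then by definition of $\comp^r$ the term $\sum_{i=1}^{N-1}|c_i^{f_n}-c_i^f|$ tends to $0$, so $c_i^{f_n}\to c_i^f$ for each $i$. Since $f$ itself is a genuine element of $\mathcal{PC}_N^r(X)$, every continuity piece $X_i(f)=(c_{i-1}^f,c_i^f)$ has non-empty interior, i.e. $c_i^f-c_{i-1}^f>0$ for all $i$. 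Hence $|c_{i-1}^{f_n}-c_i^{f_n}|\to c_i^f-c_{i-1}^f>0$ for every $i$, so no piece collapses and $\kappa(\underline{s})=0$.

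For the converse, suppose $\underline{s}=\{f_n\}$ is Cauchy and $\kappa(\underline{s})=0$. First I would extract convergence of the breakpoints: from the Cauchy property and the term $\sum_i|c_i^{f_n}-c_i^{f_m}|$ in $\comp^r$, each sequence $\{c_i^{f_n}\}_n$ is Cauchy in $\mathbb R$, hence converges to some $c_i^\ast\in X$, with $c_0^\ast=c_0$, $c_N^\ast=c_N$. Because $\kappa(\underline{s})=0$ we get $c_i^\ast-c_{i-1}^\ast>0$ for every $i$, so the points $c_1^\ast<\cdots<c_{N-1}^\ast$ genuinely partition $X$ into $N$ non-degenerate closed intervals $I_i^\ast:=[c_{i-1}^\ast,c_i^\ast]$. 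In the language of Section~\ref{sec3}, for each fixed $i$ the intervals $I_{i}^{f_n}:=\overline{X_i(f_n)}$ converge to $I_i^\ast\in\mathbb I(X)$ with respect to $\dist_{\mathcal H}$ (this is immediate from convergence of the two endpoints). Next I would pass to the ``decoupled'' spaces: for each $i\in\{1,\dots,N\}$, the pieces $f_{n,i}:=[f_n]_i\in\mathbb{FI}(X;X)$ form a Cauchy sequence in $\big(\mathbb{FI}(X;X),\chi\big)$, because the $i$-th summand $\max_{x\in\overline{X_i(f_n)}}|f_{n,i}(x)-f_{m,i}\circ\xi_i(f_n,f_m;x)|$ plus $|c_{i-1}^{f_n}-c_{i-1}^{f_m}|+|c_i^{f_n}-c_i^{f_m}|$ (which controls $\dist_{\mathcal H}(I_i^{f_n},I_i^{f_m})$) is dominated by $\comp^r(f_n,f_m)$; and likewise for each $j\in\{1,\dots,r\}$ the derivative pieces $f_{n,i}^{(j)}\in\mathbb{FI}(X;\mathbb R)$ form a Cauchy sequence in $\big(\mathbb{FI}(X;\mathbb R),\chi\big)$, using Remark~\ref{REMARK1}. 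Now apply Lemma~\ref{FIX} to each of these $N(r+1)$ Cauchy sequences (the hypothesis $I_{f_n}\to I$ is exactly the breakpoint convergence just established): the $i$-th piece $f_{n,i}$ converges in $\chi$ to some continuous $F_i:I_i^\ast\to X$, and $f_{n,i}^{(j)}$ converges to some $G_i^{(j)}:I_i^\ast\to\mathbb R$. Then apply Lemma~\ref{DERIVATIVE} inductively in $j$ to identify $G_i^{(j)}=F_i^{(j)}$, so that $F_i$ is of class $\mathcal C^r$ on $I_i^\ast$ with all derivatives equal to the respective $\chi$-limits.

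It remains to assemble the limit $f$ and check it is the $\comp^r$-limit of $\underline{s}$. Define $f\in PC_N^r(X)$ by declaring its breakpoints to be $\Delta_f:=\{c_1^\ast,\dots,c_{N-1}^\ast\}$ and setting $f|_{X_i(f)}:=F_i|_{(c_{i-1}^\ast,c_i^\ast)}$; one must verify $f$ maps into $X$ (clear since each $F_i$ does), that each $F_i$ is uniformly continuous on the open piece together with its derivatives (clear since $F_i\in\mathcal C^r(I_i^\ast)$ and $I_i^\ast$ is compact), so $f$ is a legitimate element of $\mathcal{PC}_N^r(X)$. Finally, to show $\comp^r(f_n,f)\to0$, unwind the definition: the breakpoint sum $\sum_i|c_i^{f_n}-c_i^\ast|\to0$ by construction, and for each $i,j$ the quantity $\max_{x\in\overline{X_i(f)}}|f_i^{(j)}(x)-[f_n]_i^{(j)}\circ\xi_i(f,f_n;x)|$ is, after the change of variables $\xi_0:=\xi(f,f_n;\cdot)$ relating $\overline{X_i(f)}=I_i^\ast$ to $\overline{X_i(f_n)}$, exactly the $\chi$-distance between $F_i^{(j)}$ and $f_{n,i}^{(j)}$ computed in $\mathbb{FI}(X;\cdot)$ (up to the Hausdorff term which is already accounted for), hence tends to $0$ by Lemmas~\ref{FIX}--\ref{DERIVATIVE}. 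Summing the finitely many terms gives $\comp^r(f_n,f)\to0$. For the ``in particular'' clause: if $c_i^{f_n}=c_i^{f_{n+1}}$ for all $n$, then each $\{c_i^{f_n}\}_n$ is eventually constant, so its limit $c_i^\ast=c_i^{f_0}$ satisfies $c_i^\ast-c_{i-1}^\ast=c_i^{f_0}-c_{i-1}^{f_0}>0$; thus $\kappa(\underline{s})=0$ and the first part applies. The main obstacle is the bookkeeping in the last step: one has to be careful that the homeomorphism $\xi_i(f,f_n;\cdot)$ built from the limit breakpoints $c^\ast$ is precisely the map that conjugates the $\chi$-comparison on the $i$-th decoupled space into the $i$-th summand of $\comp^r$, and that property (P4)-type compatibility lets the $s_n$-recursion used inside Lemma~\ref{FIX} match the direct comparison $\comp^r(f_n,f)$; everything else is routine once the decoupling is set up.
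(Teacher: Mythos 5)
Your proposal is correct and follows essentially the same route as the paper: convergence of the breakpoints plus $\kappa(\underline{s})=0$ gives non-degenerate limit intervals, the problem is decoupled into $N(r+1)$ Cauchy sequences in the spaces $\mathbb{FI}(X;\cdot)$, and Lemmas \ref{FIX} and \ref{DERIVATIVE} are applied to build and identify the limit map before reassembling the $\comp^r$-estimate. Your explicit induction on $j$ when invoking Lemma \ref{DERIVATIVE}, and your spelled-out argument for the easy direction, are slightly more careful than the paper's but not a different method.
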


\begin{proof}
The direct implication is trivial: if the Cauchy sequence $\underline{s}$ converges in $\mathcal{PC}_N^r(X)$ then the limit of this sequence has $N$ continuity pieces and, therefore, $\kappa(\underline{s})=0$. Reciprocally, if $\underline{s}=\{f_n\}_{n\in\mathbb N}$ is a Cauchy sequence with $\kappa(\underline{s})=0$, then given $\epsilon>0$ there exists $n_\epsilon\geq n_0$ such that $\comp^r(f_n,f_m)<\epsilon$ for any $n,m\geq n_\epsilon$. This implies also that for any $n,m\geq n_\epsilon$,
\[
\big|c_i^{f_n}-c_i^{f_m}\big|<\epsilon\qquad \forall\,i\in\{1,\ldots,N-1\}
\]
and
\[
\max_{x\in \overline{X_i(f_n)}}\Big|[f_n^{(j)}]_i(x)-[f_m^{(j)}]_i\circ\xi_{i}(f_n,f_m;x)\Big|<\epsilon,
\]
for every $i\in\{1,\ldots,N\}$ and $j\in\{0,\ldots,r\}$, where $[f_n^{(j)}]_i$ denotes the continuous extension of $f_n^{(j)}\big|_{X_i(f_n)}$ to set $\overline{X_i(f_n)}$. Thus, for every $i\in\{1,\ldots,N\}$ we have that $\big\{[f_n]_i\big\}_{n\in\mathbb N}$ and $\big\{[f_n^{(j)}]_i\big\}_{n\in\mathbb N}$, $1\leq j\leq r$, are Cauchy sequences in the metric spaces $\mathbb{FI}(X;X)$ and $\mathbb{FI}(X;\mathbb R)$, respectively. 

Given $i\in\{1,\ldots,N-1\}$, it is clear that each $\big\{c_i^{f_n}\big\}_{n\in\mathbb N}$ is Cauchy in $X$, so that there exist $c_i\in X$ such that $c_i^{f_n}\to c_i$. Now, since $\kappa(\underline{s})=0$ we deduce that there exist intervals $I_i\in\mathbb I(X)$ such that $\overline{X_i(f_n)}\to I_i$ w.r.t. the Hausdorff metric. We also have that $I_i=[c_{i-1},c_i]$.

By Lemma \ref{FIX} we conclude that for any $i\in\{1,\ldots,N\}$ and $j\in\{1,\ldots,r\}$ there exists unique maps $g_{i,j}:I_i\to \mathbb R\in\mathbb{FI}(X;\mathbb R)$ and $g_{i,0}: I_i\to X\in\mathbb{FI}(X;X)$ such that
\begin{eqnarray*}
\forall\,\epsilon>0\;\,\exists\,n^*_\epsilon\in\mathbb N\;:\quad \left\{\begin{array}{l}
     \displaystyle |c_i^{f_n}-c_i|<\frac{\epsilon}{2N}\quad \text{and}  \\[2ex]
     \displaystyle \max_{x\in \overline{X_i(f_n)}}\Big|\big[f_n^{(j)}\big]_i(x)-g_{i,j}\circ\xi_{i}(f_n,g_{i,j};x)\Big|<\frac{\epsilon}{2Nr},
\end{array}\right.
\end{eqnarray*}
for every $n\geq n^*_\epsilon$, any $i\in\{1,\ldots,N\}$ and $j\in\{0,\ldots,r\}$. Now, define the map $f\in\mathcal{PC}^r_N(X)$ by
\[
\overline{X_i(f)}=I_i\quad\text{ and }\quad f_i=g_{i,0}\qquad \forall\,i\in\{1,\ldots,N\}.
\]
From Lemma \ref{DERIVATIVE} we have that $f_i^{(j)}=g_{i,0}^{(j)}=g_{i,j}\,$ for every $i\in\{1,\ldots,N\}$ and $j\in\{1,\ldots,r\}$. Thus, for any $n\geq n^*_\epsilon$ we deduce that
\[
\comp^r(f_n,f)=\sum_{i=1}^{N-1}\big|c_i^{f_n}-c_i\big|+\sum_{i=1}^N\sum_{j=0}^r\chi\big([f_n^{(j)}]_i,f_i^{(j)}\big)<\epsilon,
\]
which proofs that the Cauchy sequence $\{f_n\}_{n\in\mathbb N}$ is convergent in $\mathcal{PC}^r_N(X)$.
\end{proof}

\section{Baire property}
\label{sec4}

We know that the open and dense subsets of $\mathcal{PC}_N^r(X)$ provide the natural topological notion of ``almost all''. Genericity is a weaker notion: a property is {\em generic} if it is verified by every piecewise map in a {\em residual} subset of $\mathcal{PC}_N^r(X)$, that is, a subset that contains a countable intersection of dense open subsets, which is also dense in $\mathcal{PC}_N^r(X)$. 

We already noted that the metric space $\big(\mathcal{PC}^r_N(X),\comp^r\big)$ is not complete, so it is not possible to deduce the Baire property directly. However, the good properties of the metric $\comp^r$ will allow us to prove that, in fact, $\mathcal{PC}^r_N(X)$ is a Baire space.

In what follows, denote by $B^r(f;\delta)$ the open ball in $\mathcal{PC}^r_N(X)$ centered at $f$ and of radius $\delta>0$.

\begin{theorem}\label{BAIRE}
$\big(\mathcal{PC}_N^r(X),\comp^r\!\big)$ is a Baire space.
\end{theorem}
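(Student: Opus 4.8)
The standard way to show a metric space is Baire without completeness is to exhibit it as a topological space in which the Baire category theorem applies — either by finding a complete metric inducing the same topology, or by running the classical nested-ball argument directly and using a structural feature of the space to guarantee that the intersection of the nested closed balls is nonempty. Here the obstruction to completeness is precisely the collapse phenomenon characterized in Theorem \ref{COLLAPSE}: a Cauchy sequence fails to converge only when some continuity pieces shrink to points. So the plan is to run the classical argument of Baire, but to choose the radii of the nested balls small enough that the widths $|c_{i-1}^{f_n}-c_i^{f_n}|$ of the continuity pieces stay bounded away from zero along the resulting Cauchy sequence; then $\kappa(\underline{s})=0$ and Theorem \ref{COLLAPSE} supplies the limit.

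\medskip

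\textbf{Step 1: setup.} Let $\{U_k\}_{k\in\mathbb N}$ be a countable family of dense open subsets of $\mathcal{PC}_N^r(X)$, and let $W$ be any nonempty open set; we must show $W\cap\bigcap_k U_k\neq\emptyset$. Pick $f_0\in W\cap U_0$; since $W\cap U_0$ is open we can choose $\delta_0>0$ with $\overline{B^r(f_0;\delta_0)}\subset W\cap U_0$. Crucially, because each continuity piece of $f_0$ is an interval with nonempty interior, the number $\mu_0:=\min_{1\le i\le N}|c_{i-1}^{f_0}-c_i^{f_0}|$ is strictly positive; shrink $\delta_0$ so that, in addition, $\delta_0<\mu_0/4$ and $\delta_0<1$.

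\medskip

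\textbf{Step 2: inductive construction.} Inductively, given $f_k$ and $\delta_k>0$, use density of $U_{k+1}$ to pick $f_{k+1}\in B^r(f_k;\delta_k/2)\cap U_{k+1}$, and then choose $\delta_{k+1}>0$ with $\overline{B^r(f_{k+1};\delta_{k+1})}\subset B^r(f_k;\delta_k/2)\cap U_{k+1}$ and, moreover, $\delta_{k+1}<\delta_k/2$ and $\delta_{k+1}<\mu_{k+1}/2^{k+3}$, where $\mu_{k+1}:=\min_{1\le i\le N}|c_{i-1}^{f_{k+1}}-c_i^{f_{k+1}}|>0$. As in the usual proof, the balls are nested, $\overline{B^r(f_{k+1};\delta_{k+1})}\subset B^r(f_k;\delta_k)$, and $\delta_k\to 0$, so $\{f_k\}_{k\in\mathbb N}$ is a $\comp^r$-Cauchy sequence whose every tail lies in $\overline{B^r(f_m;\delta_m)}$ for each $m$.

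\medskip

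\textbf{Step 3: no collapse, hence convergence.} Here is the heart of the argument. For $n>m$ we have $\comp^r(f_n,f_m)<\delta_m$, and in particular each $|c_i^{f_n}-c_i^{f_m}|<\delta_m$ for all $i$. Hence for every $i$,
\[
|c_{i-1}^{f_n}-c_i^{f_n}|\ \ge\ |c_{i-1}^{f_m}-c_i^{f_m}|-2\delta_m\ \ge\ \mu_m-2\delta_m\ >\ \mu_m - \mu_m/2 \ =\ \mu_m/2\ >\ 0,
\]
using $\delta_m<\mu_m/2^{m+3}\le\mu_m/4$. Fixing $m=1$ (say), this shows $\liminf_{n\to\infty}|c_{i-1}^{f_n}-c_i^{f_n}|\ge\mu_1/2>0$ for every $i$, so $\kappa(\underline{s})=0$ where $\underline{s}=\{f_n\}$. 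By Theorem \ref{COLLAPSE}, the Cauchy sequence $\underline{s}$ converges to some $f\in\mathcal{PC}_N^r(X)$. Since $f$ is the limit of a sequence eventually contained in each closed ball $\overline{B^r(f_k;\delta_k)}$, we get $f\in\overline{B^r(f_k;\delta_k)}\subset U_k$ for every $k$, and $f\in\overline{B^r(f_0;\delta_0)}\subset W$. Therefore $f\in W\cap\bigcap_k U_k$, which is thus nonempty; as $W$ was an arbitrary nonempty open set, $\bigcap_k U_k$ is dense, and $\big(\mathcal{PC}_N^r(X),\comp^r\big)$ is a Baire space.

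\medskip

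\textbf{Main obstacle.} The only delicate point is controlling the collapse index along the constructed sequence; this is exactly why the radii $\delta_k$ are tied to the minimal piece-width $\mu_k$ in Step 2. Everything else is the verbatim nested-ball argument. The decisive input is Theorem \ref{COLLAPSE}, which converts the quantitative non-collapse estimate of Step 3 into genuine convergence of the Cauchy sequence despite the incompleteness of the space.
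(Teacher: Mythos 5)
Your proposal is correct and follows essentially the same route as the paper: the classical nested-ball Baire argument, with the radii chosen small relative to the minimal width of the continuity pieces so that the collapse index of the resulting Cauchy sequence is zero, and Theorem \ref{COLLAPSE} then supplying the limit inside all the closed balls. Your non-collapse estimate (comparing $f_n$ directly to a fixed $f_m$ via the nested balls) is in fact a slightly cleaner version of the telescoping bound used in the paper.
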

\begin{proof}
Let $\{\mathcal D_n\}_{n\in\mathbb N}$ be a collection of dense open subsets of $\mathcal{PC}^r_N(X)$. We claim that if $\mathcal U$ is any non-empty open set of $\mathcal{PC}^r_N(X)$ then $\mathcal D\cap \mathcal U\neq\emptyset$, where
\[
\mathcal D:=\bigcap_{n\in\mathbb N}\mathcal D_n.
\]
Consider any $f_1\in \mathcal U$ and consider $r_1>0$ satisfying $\overline{B^r(f_1;r_1)}\subset \mathcal U$ and 
\begin{equation}
r_1<\frac{\min\!\big\{\big|c_{i-1}^{f_1}-c_i^{f_1}\big|\,:\, 1\leq i\leq N\big\}}{3}.\label{R1}
\end{equation}
Clearly, $\mathcal D_1\cap B^r(f_1;r_1)\neq \emptyset$ is open. Thus, there exist $f_2\in \mathcal D_1\cap B^r(f_1;r_1)$ and $r_2>0$ such that 
\[
\overline{B^r(f_2;r_2)} \subset \mathcal D_1\cap B^r(f_1;r_1)\quad\text{ and }\quad r_2\leq \frac{r_1}{3^2}.
\]
Again, there exist $f_3\in \mathcal D_2\cap B^r(f_2;r_2)$ and $r_3>0$ such that $\overline{B^r(f_3;r_3)} \subset \mathcal D_2\cap B^r(f_2;r_2)$ and $r_3\leq r_1/3^3$. Applying this argument successively, we obtain sequences $\{f_n\}_{n\geq1}$ and $\{r_n\}_{n\geq1}$ such that 
\[
\overline{B^r(f_{n+1};r_{n+1})} \subset \mathcal D_n\cap B^r(f_n;r_n)\quad\text{ and }\quad r_{n+1}\leq \frac{r_1}{3^{n+1}}\qquad \forall\,n\geq1.
\]
By construction, note that $\underline{s}:=\{f_n\}_{n\geq1}$ is a Cauchy sequence and $B^r(f_{n+1};r_{n+1})\subset B^r(f_{n};r_{n})$ for every $n\geq1$. We claim that $\kappa(\underline{s})=0$. Indeed, for every $n\geq2$ and $i\in\{1,\ldots,N\}$ we have that
\begin{equation}
\big|c_{i-1}^{f_n}-c_i^{f_n}\big|\geq \big|c_{i-1}^{f_n}-c_i^{f_1}\big|-\sum_{j=1}^{n-1}\big|c_{i}^{f_j}-c_i^{f_{j+1}}\big|\geq \big|c_{i-1}^{f_n}-c_i^{f_1}\big|-r_1\sum_{j=1}^{n-1}\frac{1}{3^j}.\label{DES1}
\end{equation}
Now, since $f_n\in B^r(f_1;r_1)$ we deduce that $\big|c_i^{f_n}-c_i^{f_1}\big|<r_1$, which implies that
\[
\big|c_{i-1}^{f_1}-c_i^{f_1}\big|<r_1+\big|c_i^{f_n}-c_{i-1}^{f_1}\big|\quad\stackrel{\text{from }\eqref{R1}}{\Rightarrow}\quad r_1<\frac{r_1}{3}+\frac{\big|c_i^{f_n}-c_{i-1}^{f_1}\big|}{3},
\]
and we deduce that $2r_1<\big|c_i^{f_n}-c_{i-1}^{f_1}\big|$. Using this last inequality and \eqref{DES1}, we have that
\[
\big|c_{i-1}^{f_n}-c_i^{f_n}\big| \geq \big|c_{i-1}^{f_n}-c_i^{f_1}\big|-r_1\sum_{j=1}^{n-1}\frac{1}{3^j}>2r_1-\frac{r_1}{2}>0,
\]
which implies that $\kappa(\underline{s})=0$. Finally, from Theorem \ref{COLLAPSE} we conclude that $\underline{s}$ converges to a some $f\in\mathcal{PC}^r_N(X)$ such that
\[
\{f\} = \bigcap_{n\geq2} \overline{B^r(f_n;r_n)} \subset \bigcap_{n\geq1} \mathcal D_n\cap B^r(f_n;r_n)\subset \bigcap_{n\geq1} \mathcal D_n\cap \mathcal U=\mathcal D\cap \mathcal U.
\]
Thus, $\mathcal D\cap \mathcal U\neq\emptyset$ and we conclude that $\mathcal D$ is dense in $\mathcal{PC}_N^r(X)$.
\end{proof}

\section{Application 1: robustness of non-degenerate critical points}\label{sec5}

It this section we follow the notation from Section \ref{sec3}. Given $f\in\mathbb{FI}(X;W)$ such that $f\in\mathcal C^r(I_f,W)$, $r\geq1$, we define the {\em critical point set} of $f$ by
\[
\Crit(f):=\big\{x\in \Int(I_f)\ :\ f'(x)=0 \big\}\cup\partial I_f,
\]
where $\partial I_f$ denotes the set of endpoints of the interval $I_f$. 
If $r\geq2$, we say that $x_0\in \Crit(f)$ is {\em non-degenerate} if $f''(x_0)\neq0$. 
Note that the condition \eqref{SUMASUMA} below implies that each critical point of $f$, if any, is non-degenerate.

\begin{lemma}\label{NDCRITICAL}
Let $f,f_n\in\mathbb{FI}(X;W)$, $n\in\mathbb N$, be of class $\mathcal C^2$. Assume that $\Crit(f)\cap\Int(I_f)=\{x^*\}$, $f_n^{(j)}\stackrel{\chi\;}{\to}f^{(j)}$ for $j\in\{0,1,2\}$ and that 
\begin{equation}
\big|f'(x)\big|+\big|f''(x)\big|>0\qquad\forall\,x\in I_f.\label{SUMASUMA}
\end{equation}
Then, there exist $m\in\mathbb N$ and a sequence $\{x_n^*\}_{n\geq m}$ in $X$ so that $\{x_n^*\}= \Crit(f_n)\cap\Int(I_{f_n})$ for every $n\geq m$, and moreover $x_n^*\to x^*$.
\end{lemma}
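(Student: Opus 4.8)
The plan is to localize the problem near $x^*$ and apply a quantitative version of the implicit function theorem / intermediate value theorem to the derivatives $f_n'$, which converge (after the $\xi$-deformation) uniformly to $f'$. First I would use the non-degeneracy hypothesis at $x^*$: since $f''(x^*)\neq 0$ (which follows from \eqref{SUMASUMA} because $f'(x^*)=0$), there is a closed subinterval $J=[x^*-\rho,x^*+\rho]\subset \Int(I_f)$ on which $f''$ has constant sign, say $f''>\delta_0>0$ on $J$, so that $f'$ is strictly increasing on $J$ with $f'(x^*-\rho)<0<f'(x^*+\rho)$. Moreover, by \eqref{SUMASUMA} and compactness, $|f'|$ is bounded below by some $\eta>0$ on the complement $I_f\setminus \Int(J)$; this is the ingredient that forbids any \emph{other} critical point of the perturbed maps from appearing away from $x^*$.

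Next I would transport these estimates to $f_n$. Write $\xi_n:=\xi(f_n,f;\cdot):I_{f_n}\to I_f$ for the linear deformation, so $\xi_n\to\id$ uniformly (and $\xi_n'\to 1$) because $I_{f_n}\to I_f$ in the Hausdorff metric. From $f_n'\xrightarrow{\chi} f'$ and $f_n''\xrightarrow{\chi} f''$ we get, for large $n$, that $f_n''\circ\xi_n^{-1}>\delta_0/2>0$ on $J$ (intersected with $I_{f_n}$, which eventually contains $J$) and that $f_n'\circ\xi_n^{-1}$ is within $\eta/2$ of $f'$ uniformly on $I_f$. Hence on the image $\xi_n^{-1}(J)$ the function $f_n'$ is strictly increasing and changes sign exactly once, so by the intermediate value theorem and strict monotonicity it has a unique zero $x_n^*\in \Int(\xi_n^{-1}(J))$; and on $\xi_n^{-1}(I_f\setminus \Int(J))$ we have $|f_n'|\geq \eta/2>0$, so there are no other interior critical points. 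This gives $\{x_n^*\}=\Crit(f_n)\cap\Int(I_{f_n})$ for all $n\geq m$, for a suitable $m$.

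Finally, for the convergence $x_n^*\to x^*$: given $\varepsilon\in(0,\rho)$, apply the same argument with the smaller interval $[x^*-\varepsilon,x^*+\varepsilon]$ in place of $J$; one gets that for $n$ large the unique zero of $f_n'$ lies in $\xi_n^{-1}\big((x^*-\varepsilon,x^*+\varepsilon)\big)$, and since $\xi_n\to\id$ uniformly this forces $|x_n^*-x^*|<2\varepsilon$ eventually, proving $x_n^*\to x^*$. The main obstacle, and the place requiring care, is bookkeeping the $\xi_n$-deformation: all the uniform estimates furnished by $\chi$ compare $f_n^{(j)}$ with $f^{(j)}\circ\xi(f_n,f;\cdot)$ rather than directly, so one must consistently pull the subinterval $J$ back through $\xi_n^{-1}$, check that $J\subset \Int(I_{f_n})$ for $n$ large (which holds since $I_{f_n}\to I_f$ and $J$ is compactly contained in $\Int(I_f)$), and control the composition with $\xi_n^{-1}$ using $\xi_n'\to 1$ — none of this is deep, but it is where an unattentive argument would slip.
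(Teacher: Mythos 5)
Your proposal is correct and follows essentially the same route as the paper: localize at $x^*$ via the non-degeneracy $f''(x^*)\neq 0$, use the uniform $\chi$-convergence of $f_n'$ and $f_n''$ (after composing with the linear deformation $\xi$) to produce a unique zero of $f_n'$ near $x^*$, and conclude $x_n^*\to x^*$ using $\xi\to\id$. If anything, your use of the compactness bound $|f'|\geq\eta$ on $I_f\setminus\Int(J)$ to rule out spurious critical points away from $x^*$ makes explicit a step the paper's proof leaves implicit.
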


\begin{proof}
From \eqref{SUMASUMA} we deduce that $f''(x^*)\neq 0$ and, from Inverse Function Theorem, there exists an open interval $J\subset \Int(I_f)$ such that $x^*\in J$ and $f'$ is invertible in $J$. Now, given that $f_n^{(j)}\stackrel{\chi\;}{\to}f^{(j)}$ for $j\in\{0,1,2\}$, the maps $f_n\circ\xi(f,f_n;\cdot)$, $f_n'\circ\xi(f,f_n;\cdot)$ and $f''_n\circ\xi(f,f_n;\cdot)$ converge uniformly to $f$, $f'$ and $f''$, respectively. Thus, there exists $m\geq0$ such that for each $n\geq m$, the map $f_n'\circ\xi(f,f_n;\cdot)$ is invertible in $J$ and the equation
\[
f_n'\circ\xi(f,f_n;x)=0\qquad\text{with $x\in J$,}
\]
has a unique solution $x_n\in J$. 
Now, given that $f_n'\stackrel{\chi\;}{\to}f'$ and that $f'_n\circ \xi(f,f_n;x_n)=0$ for $n\geq m$, we deduce 
\[
\big|f'(x_n)\big|=\big|f'(x_n)-f'_n\circ \xi(f,f_n;x_n)\big|\leq \chi(f',f'_n)\qquad\forall\,n\geq m,
\]
which implies that $f'(x_n)\to 0$ as $n\to\infty$. Also, since $f'$ is invertible in $J$ and $\big[f'|_J\big]^{-1}$ is continuous at $0$, we conclude
\begin{equation}
\lim_{n\to\infty}x_n=\lim_{n\to\infty}\big[f'|_J\big]^{-1}\big(f'(x_n)\big)=\big[f'|_J\big]^{-1}(0)=x^*.\label{LIMXN}
\end{equation}
For every $n\geq m$, let $x^*_n:=\xi(f,f_n;x_n)\in \Int(I_{f_n})$ denote the unique critical point of $f_n$. Thus, given $\epsilon>0$, from \eqref{LIMXN} we get that there exists $n_1\geq m$ such that $|x^*-x_n|<\epsilon/2$ for every $n\geq n_1$. Also, from $f_n\stackrel{\chi\;}{\to}f$ there exists $n_2\geq m$ such that $\chi(f_n,f)<\epsilon/2$ for all $n\geq n_2$, which implies, from the property {\bf (P5)} of $\xi$, that
\[
\big|\xi(f,f_n;x_n)-x_n\big|<\frac{\epsilon}{2}\qquad\forall\,n\geq n_2.
\]
Finally, using this last inequality and \eqref{LIMXN} we have that
\[
|x^*_n-x^*|\leq \big|\xi(f,f_n;x_n)-x_n\big|+|x_n-x^*|<\epsilon,
\]
for every $n\geq\max\{n_1,n_2\}$. This completes the proof.
\end{proof}

\begin{theorem}
Let $f\in\mathbb{FI}(X;W)$ be of class $\mathcal C^2$ and such that $\Crit(f)\cap \Int(I_f)=\{x_0\}$. Also, assume that $f$ satisfies \eqref{SUMASUMA}. Then, there exists $\epsilon>0$ such that $1\leq \#\big(\!\Crit(g)\cap\Int(I_g)\big)\leq 3$ for every $g\in\mathbb{FI}(X;W)$ satisfying
\[
\chi(f,g)+\chi(f',g')+\chi(f'',g'')<\epsilon.
\]
\end{theorem}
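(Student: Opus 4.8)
The plan is to localize. By hypothesis the only zero of $f'$ in $\Int(I_f)=(a,b)$ (write $I_f=[a,b]$) is $x_0$, so by continuity and \eqref{SUMASUMA} the only points of $I_f$ where $f'$ can be close to $0$ are $x_0$ and, possibly, the two endpoints $a,b$ (where $f'$ may vanish, but then $f''$ does not). First I would fix three pairwise disjoint relative neighbourhoods $J_0\ni x_0$ (with $\overline{J_0}\subset(a,b)$), $J_a\ni a$ and $J_b\ni b$ on each of which a suitable derivative is bounded away from $0$: on $J_0$ one has $|f''|\ge d_0>0$, using $f''(x_0)\neq0$ (which follows from \eqref{SUMASUMA} and $f'(x_0)=0$); and on $J_c$, $c\in\{a,b\}$, either $|f'|\ge\gamma_c>0$ (if $f'(c)\neq0$) or $|f''|\ge d_c>0$ (if $f'(c)=0$, using \eqref{SUMASUMA} at $c$). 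On the compact set $K:=I_f\setminus(J_0\cup J_a\cup J_b)$, which avoids $x_0$ and both endpoints, the hypothesis $\Crit(f)\cap\Int(I_f)=\{x_0\}$ and compactness give $|f'|\ge\gamma_K>0$. Inside $J_0$ I would also fix once and for all two points $p<x_0<q$ with $f'(p)f'(q)<0$ and $|f'(p)|,|f'(q)|\ge\eta>0$, which is possible because $f'$ is strictly monotone on $J_0$ and vanishes at $x_0$.

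Next I would translate $\chi$-proximity into uniform estimates. Writing $\xi:=\xi(f,g;\cdot)\colon I_f\to I_g$ — which is \emph{also} the reparametrisation occurring in $\chi(f',g')$ and $\chi(f'',g'')$, since it depends only on the intervals $I_f,I_g$ — the assumption $\chi(f,g)+\chi(f',g')+\chi(f'',g'')<\epsilon$ yields $\dist_{\mathcal H}(I_f,I_g)<\epsilon$, $\max_{x\in I_f}|f'(x)-g'(\xi(x))|<\epsilon$, $\max_{x\in I_f}|f''(x)-g''(\xi(x))|<\epsilon$, and, since $\xi'$ is the positive constant $|I_g|/|I_f|$, also $|\xi'-1|<2\epsilon/(b-a)$. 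I would then fix $\epsilon>0$ depending only on $f$ (on $b-a$, on $\gamma_K$, $\eta$, $d_0$, and on whichever of $\gamma_a,\gamma_b,d_a,d_b$ are actually defined) small enough that all the sign comparisons below are valid.

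For the upper bound I would use $(g'\circ\xi)'=\xi'\cdot(g''\circ\xi)$: on $J_0$ and on any $J_c$ with $f'(c)=0$ the factor $g''\circ\xi$ stays within $\epsilon$ of $f''$, which is bounded away from $0$ there, hence $g''\circ\xi$ does not change sign, so $g'\circ\xi$ is strictly monotone and has at most one zero on that set; on $K$ and on any $J_c$ with $f'(c)\neq0$ one has $|g'\circ\xi|\ge|f'|-\epsilon>0$, hence no zero. Thus $g'\circ\xi$ has at most three zeros in $\Int(I_f)$. For the lower bound, $g'(\xi(p))$ and $g'(\xi(q))$ differ from $f'(p)$ and $f'(q)$ by less than $\epsilon<\eta$, hence keep the opposite signs, so the intermediate value theorem produces a zero of $g'\circ\xi$ in $(p,q)\subset\Int(I_f)$. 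Finally, since $\xi$ is an increasing homeomorphism of $I_f$ onto $I_g$ carrying interior to interior, the zeros of $g'\circ\xi$ in $\Int(I_f)$ are exactly the $\xi$-preimages of the points of $\Crit(g)\cap\Int(I_g)$, which gives $1\le\#\big(\Crit(g)\cap\Int(I_g)\big)\le3$.

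The step I expect to be the main obstacle is the treatment of the endpoints: there $f'$ is allowed to vanish, which forces the case split above and the use of \eqref{SUMASUMA} at $a$ and $b$; and it is precisely because $g'\circ\xi$ may acquire one extra zero near each endpoint that the bound is $3$ rather than $1$, so the count $3$ is in fact sharp. A second point requiring care is that $\chi$ compares $f'$ with $g'$ \emph{reparametrised} by $\xi$, so what one directly controls is the zero set of $g'\circ\xi$, not of $g'$; passing between the two uses only that $\xi$ is an increasing homeomorphism $I_f\to I_g$ (the analogue of property {\bf (P1)} for $\mathbb{FI}(X;W)$), but it must be carried out explicitly in order to land in $\Int(I_g)$.
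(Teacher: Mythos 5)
Your argument is correct, and while it follows the same localization strategy as the paper's proof (an interior neighbourhood of $x_0$, neighbourhoods of whichever endpoints are degenerate, and a compact remainder on which $|f'|$ is bounded below), the two key steps are carried out by genuinely different means. For the existence of an interior critical point of $g$, the paper argues by contradiction through Lemma \ref{NDCRITICAL} (a sequential argument producing critical points $x_n^*\to x^*$), whereas you fix test points $p<x_0<q$ once and for all and apply the intermediate value theorem to $g'\circ\xi$; your version is direct and yields an explicit admissible $\epsilon$ in terms of $\eta$, $d_0$, $\gamma_K$, etc. For the ``at most one'' counts, the paper handles an endpoint $c$ with $f'(c)=0$ by reflecting $f$ across $c$ to manufacture a nondegenerate interior critical point of an extension $\tilde f$ and then reusing the interior argument, whereas you observe that $(g'\circ\xi)'=\xi'\cdot(g''\circ\xi)$ keeps the (constant) sign of $f''$ on the relevant neighbourhood once $\epsilon<d_c$, so that $g'\circ\xi$ is strictly monotone there; this also supplies the detail behind the paper's brief assertion that $\eta_0$ can be shrunk so that the solution of $g'\circ\xi=0$ in $J$ is unique. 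The two points you flag as delicate are exactly the right ones, and you handle both: the reparametrizations in $\chi(f,g)$, $\chi(f',g')$, $\chi(f'',g'')$ coincide because $\xi$ depends only on the pair $(I_f,I_g)$, and the increasing affine homeomorphism $\xi$ puts the zeros of $g'\circ\xi$ in $\Int(I_f)$ in bijection with $\Crit(g)\cap\Int(I_g)$. The only hypothesis worth stating explicitly in a write-up is that $g$ is of class $\mathcal C^2$, which is implicit in the finiteness of $\chi(f',g')$ and $\chi(f'',g'')$ and is likewise assumed in the paper.
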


\begin{proof}
From \eqref{SUMASUMA} we deduce that $x_0$ is a non-degenerate critical point. Thus, from Inverse Function Theorem there exists an open interval $J\subset I_f$ such that $x_0\in J$ and the restriction of $f'$ to $J$ is invertible. Moreover, it is possible to choose $J$ small enough so that $f''(x)\neq0$ for every $x\in \overline{J}$, and such that the endpoints of $I_f$ do not lie in $\overline{J}$. Now, we claim that there exists $\eta_0>0$ such that, for every $g\in \mathbb{FI}(X;W)$ of class $\mathcal C^2$ satisfying $\chi(f,g)+\chi(f',g')+\chi(f'',g'')<\eta_0$, the equation
\begin{equation}
g'\circ\xi(f,g;x)=0\label{GGG}
\end{equation}
has a solution in $J$. Indeed, assume by contradiction that for every $n\geq1$ there exists $g_n\in\mathbb{FI}(X;W)$ of class $\mathcal C^2$ such that
\[
\chi(f,g_n)+\chi(f',g_n')+\chi(f'',g_n'')<\frac{1}{n}
\]
and such that the equation \eqref{GGG} has no solution for $g_n'$. Thus, we have that $g_n^{(j)}\stackrel{\chi\;}{\to} f^{(j)}$ for $j\in\{0,1,2\}$ and, from Lemma \ref{NDCRITICAL}, there exist $m\in\mathbb N$ and a sequence $\{x_n^*\}_{n\geq m}$ such that $\{x_n^*\}=\Crit(f_n)\cap\Int(I_{f_n})$ for every $n\geq m$ and such that $x_n^*\to x^*$, which is our desired contradiction. Moreover, since $f'$ is invertible on $J$, we can choose $\eta_0$ so that such equation \eqref{GGG} has a unique solution for every map $g\in \mathbb{FI}(X;W)$ of class $\mathcal C^2$ satisfying $\chi(f,g)+\chi(f',g')+\chi(f'',g'')<\eta_0$.

First, assuming that $I_f=[a,b]$, we suppose that $f'(a)\neq 0\neq f'(b)$. Thus, we can consider $\epsilon>0$ such that
\[
\epsilon<\frac{1}{2}\min\!\left\{\eta_0\,, \min_{x\in I_f\setminus J}\big|f'(x)\big|\,,\min_{x\in \overline{J}}\big|f''(x)\big|\right\}\!.
\]
Let $g\in\mathbb{FI}(X;W)$ be such that $\chi(f,g)+\chi(f',g')+\chi(f'',g'')<\epsilon$. Let's prove that $g\circ\xi(f,g;\cdot)$ has a unique internal critical point. Indeed, note that
\[
2\epsilon<\big|f'(x)\big|\leq \big|f'(x)-g'\circ\xi(f,g;x)\big|+\big|g'\circ\xi(f,g;x)\big|<\epsilon+\big|g'\circ\xi(f,g;x)\big|\quad\forall\,x\in I_f\setminus J,
\]
which implies that $g\circ\xi(f,g;\cdot)$ has no critical points in $I_f\setminus J$, except endpoints $a$ and $b$. Also, since $\epsilon<\eta_0$ we have that $g\circ\xi(f,g;x)$ has a unique critical point in $J$. Thus, we deduce that $\#\big(\!\Crit(g)\cap\Int(I_g)\big)=1$ for every $g$ of class $\mathcal C^2$ such that $\chi(f,g)+\chi(f',g')+\chi(f'',g'')<\epsilon$.\\[-2ex]

Now, we assume that $f'(a)=0$ and $f'(b)\neq 0$. From \eqref{SUMASUMA}, there exists $\delta>0$ such that
\[
[a,a+\delta]\cap \overline{J}=\emptyset\qquad\text{and}\qquad f''(x)\neq0 \quad\forall\,x\in[a,a+\delta].
\]
We claim that there exists $\eta_0'>0$ such that, for every $g$ of class $\mathcal C^2$ satisfying $\chi(f,g)+\chi(f',g')+\chi(f'',g'')<\eta_0'$, the equation
\begin{equation}
g'\circ\xi(f,g;x)=0\qquad\text{with $x\in[a,a+\delta)$,}\label{DELTA.EQ}
\end{equation}
has at most one solution. Define the map $\tilde f:[2a-b,b]\to W$ by
\[
\tilde f(x):=\left\{\begin{array}{ll}
    f(x) &\text{ if $x\in[a,b]$,}  \\
    f(2a-x) & \text{ if $x\in[2a-b,a)$.} 
\end{array}\right.
\]
It is very easy to verify that $\tilde f$ is differentiable in $x=a$ and that $a$ is a non-degenerate internal critical point for $\tilde f$ (moreover, every $\mathcal C^2$-extension of $f$ to interval $[2a-b,b]$ satisfies that $a$ is a non-degenerate internal critical point for such extension). Using a similar argument to deduce the existence of $\eta_0$, we can prove that there exists $\eta_1>0$ such that the equation 
\begin{equation*}
g'\circ\xi(f,g;x)=0\qquad\text{with $x\in(a-\delta,a+\delta)$,}
\end{equation*}
has a unique solution for maps $g$ of class $\mathcal C^2$ satisfying that $\chi(\tilde f,g)+\chi(\tilde f',g')+\chi(\tilde f'',g'')<\eta_1$. Since $\tilde f|_{[a,b]}=f$, we deduce that the existence of $\eta_0'>0$ such that the equation \eqref{DELTA.EQ} has at most one solution for maps $g$ of class $\mathcal C^2$ satisfying $\chi(f,g)+\chi(f',g')+\chi(f'',g'')<\eta_0'$.

Now, define $U:=[a,a+\delta)\cup J$ and consider $\epsilon>0$ such that 
\[
\epsilon<\frac{1}{2}\min\!\left\{\eta_0\,,\eta_0'\,, \min_{x\in I_f\setminus U}\big|f'(x)\big|\,,\min_{x\in \overline{U}}\big|f''(x)\big|\right\}\!.
\]
Let $g\in\mathbb{FI}(X;W)$ be such that $\chi(f,g)+\chi(f',g')+\chi(f'',g'')<\epsilon$. Let's prove that $g\circ\xi(f,g;\cdot)$ has at least one critical point and at most 2 critical points. Using analogous arguments to those given in the previous case, we conclude that $g\circ\xi(f,g;\cdot)$ has a unique critical point in $J$, and has no critical points in $I_f\setminus\overline{U}$. Also, as $\epsilon<\eta_0'$ we have that $g\circ\xi(f,g;\cdot)$ has at most one critical point in $[a,a+\delta)$, therefore, $1\leq \#\big(\!\Crit(g)\cap\Int(I_g)\big)\leq2$ for every $g$ of class $\mathcal C^2$ satisfying $\chi(f,g)+\chi(f',g')+\chi(f'',g'')<\epsilon$.\\[-2ex]

The case $f'(a)\neq0$ and $f'(b)=0$ is argued in a similar way, concluding the same as above. Finally, for the case $f'(a)=f'(b)=0$ is necessary to apply the 2 previous arguments simultaneously, concluding that $1\leq \#\big(\!\Crit(g)\cap\Int(I_g)\big)\leq3$ for every $g$ of class $\mathcal C^2$ satisfying $\chi(f,g)+\chi(f',g')+\chi(f'',g'')<\epsilon$, where $U$ and $\epsilon$ must be chosen appropriately.
\end{proof}

\begin{corollary}\label{NDCRITICAL.2}
Let $f\in\mathbb{FI}(X;W)$ be of class $\mathcal C^2$ such that $\#\Crit(f)<\infty$ and such that $f$ satisfies \eqref{SUMASUMA}. Then, there exists $\epsilon>0$ such that $\#\big(\!\Crit(f)\cap\Int(I_f)\big)\leq \#\big(\!\Crit(g)\cap\Int(I_g)\big)\leq \#\big(\!\Crit(f)\cap\Int(I_f)\big)+2$ for every $g\in\mathbb{FI}(X;W)$ satisfying $\chi(f,g)+\chi(f',g')+\chi(f'',g'')<\epsilon$. If in addition $f'(x)\neq0$ for $x\in\partial I_f$, then there exists $\epsilon>0$ such that $\#\Crit(g)=\#\Crit(f)$ for every $g\in\mathbb{FI}(X;W)$ satisfying $\chi(f,g)+\chi(f',g')+\chi(f'',g'')<\epsilon$.
\end{corollary}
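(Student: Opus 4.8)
The plan is to \emph{localize}: decompose the domain $I_f$ into finitely many pieces on each of which the effect of a small perturbation on the critical set is governed by the proof of the preceding Theorem, and then add the contributions. First I would use $\#\Crit(f)<\infty$ to list the interior critical points of $f$ as $\Crit(f)\cap\Int(I_f)=\{x_1<\cdots<x_k\}$ (possibly $k=0$); by \eqref{SUMASUMA}, $f'(x_j)=0$ forces $f''(x_j)\neq 0$, so each $x_j$ is non-degenerate. Exactly as in the proof of the preceding Theorem (Inverse Function Theorem together with continuity of $f''$), I would pick pairwise disjoint open intervals $J_1,\dots,J_k$ with $x_j\in J_j$, $\overline{J_j}\subset\Int(I_f)$, $\overline{J_j}\cap\partial I_f=\emptyset$, with $f'|_{J_j}$ invertible and $f''$ nowhere zero on $\overline{J_j}$. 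Writing $I_f=[a,b]$, if $f'(a)=0$ then \eqref{SUMASUMA} gives $f''(a)\neq 0$ and I would choose $\delta_a>0$ with $[a,a+\delta_a]\cap\overline{J_j}=\emptyset$ for all $j$ and $f''$ nowhere zero on $[a,a+\delta_a]$; put $U_a:=[a,a+\delta_a)$, and $U_a:=\emptyset$ when instead $f'(a)\neq 0$. Define $U_b$ symmetrically at $b$. Then $K:=I_f\setminus\big(\bigcup_j J_j\cup U_a\cup U_b\big)$ is compact and $f'$ has no zero on $K$, so $m:=\min_{x\in K}|f'(x)|>0$.

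Next I would invoke the \emph{local} content of the proof of the preceding Theorem. Applied with $J=J_j$ around the non-degenerate interior critical point $x_j$, that proof produces a threshold $\eta_j>0$ such that every $\mathcal C^2$ map $g\in\mathbb{FI}(X;W)$ with $\chi(f,g)+\chi(f',g')+\chi(f'',g'')<\eta_j$ has exactly one zero of $g'\circ\xi(f,g;\cdot)$ in $J_j$ (this is precisely the unique solvability of \eqref{GGG} established there). If $f'(a)=0$, the same proof via the reflected map $\tilde f$ yields $\eta_a>0$ so that such a $g$ has at most one zero of $g'\circ\xi(f,g;\cdot)$ in $U_a$ (unique solvability of \eqref{DELTA.EQ}), and analogously $\eta_b$ at $b$. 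I would then fix
\[
\epsilon<\tfrac12\min\{\eta_1,\dots,\eta_k,\eta_a,\eta_b,m\},
\]
with the convention that $\eta_a$ (resp.\ $\eta_b$) is omitted when $f'(a)\neq 0$ (resp.\ $f'(b)\neq 0$).

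It remains to count. For any $g\in\mathbb{FI}(X;W)$ with $\chi(f,g)+\chi(f',g')+\chi(f'',g'')<\epsilon$, the affine homeomorphism $\xi(f,g;\cdot):I_f\to I_g$ (which equals $\xi(f',g';\cdot)$, since these maps have the same domains of definition) has constant positive derivative, hence is a bijection identifying the zeros of $g'\circ\xi(f,g;\cdot)$ in $\Int(I_f)$ with $\Crit(g)\cap\Int(I_g)$; so it suffices to count the former. On $K$ one has $|g'\circ\xi(f,g;x)|\geq |f'(x)|-\chi(f',g')\geq m-\epsilon>0$, so there are no zeros there; each $J_j$ contributes exactly one, and each nonempty $U_a,U_b$ at most one. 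Therefore $k\le\#\big(\Crit(g)\cap\Int(I_g)\big)\le k+\#\{\text{endpoints of }I_f\text{ at which }f'=0\}\le k+2$, which is the first assertion. If moreover $f'\neq 0$ on $\partial I_f$, then $U_a=U_b=\emptyset$ and the count is exactly $k$; since $I_g$ is non-degenerate it has exactly two endpoints, so $\#\Crit(g)=\#\big(\Crit(g)\cap\Int(I_g)\big)+2=k+2=\#\Crit(f)$, which is the second assertion.

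\textbf{Main obstacle.} Since the preceding Theorem is stated only for a single interior critical point and delivers merely the global bound $1\le\#\le 3$, the real work is to re-read its proof and isolate the genuinely local statements — the existence of the threshold $\eta_0$ attached to a fixed neighborhood $J$ of a non-degenerate interior critical point, and of $\eta_0'$ attached to an endpoint neighborhood — and to check that these thresholds and the lower bound $m$ on $|f'|$ over $K$ depend only on $f$ near the relevant subinterval and on the three \emph{global} $\chi$-distances. In particular one must verify that $\xi(f,g;\cdot)$ respects the localization, i.e.\ that a zero of $g'$ in $\xi(f,g;J_j)$ is accounted for by a zero of $g'\circ\xi(f,g;\cdot)$ in $J_j$ and nowhere else; this is immediate from the bijectivity of $\xi(f,g;\cdot)$ (the $\mathbb{FI}$-analogue of property {\bf (P6)}). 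Once the decomposition $I_f=K\cup\bigcup_j J_j\cup U_a\cup U_b$ is in place, the additivity of the count is routine.
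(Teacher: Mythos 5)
Your proposal is correct and is essentially the argument the paper intends: the corollary is stated as a direct consequence of the preceding theorem, and your localization (disjoint neighborhoods $J_j$ of the finitely many non-degenerate interior critical points, endpoint intervals $U_a,U_b$ when $f'$ vanishes there, and a uniform lower bound for $|f'|$ on the complementary compact set) is exactly the deduction the paper leaves implicit. Your flagged ``obstacle'' is real but routine, since existence of a zero of $g'\circ\xi(f,g;\cdot)$ in $J_j$ follows from the sign change of $f'$ there and uniqueness from $|f''|$ being bounded away from zero on $\overline{J_j}$, both of which depend only on $f$ near $J_j$ and the global $\chi$-distances.
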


For PC${}^r$-maps, $r\geq1$, we say that $x_0\in X$ is a {\em critical point} of $f\in\mathcal{PC}_N^r(X)$ if either $x_0\in \Delta_f\cup\{c_0,c_N\}$, or $x_0\in X\setminus(\Delta_f\cup\{c_0,c_N\})$ and $f'(x_0)=0$. Let $\Crit(f)$ denote the set of critical points of $f$. Note that all local extrema occur at critical points. We say that a critical point $x_0$ of $f\in\mathcal{PC}_N^r(X)$, $r\geq2$, is {\em non-degenerate} if
\[
f_i''(x_0)\neq 0\qquad\Leftrightarrow \qquad x_0\in\overline{X_i(f)}.
\]
Note that if $x_0\in X\setminus(\Delta_f\cup\{c_0,c_N\})$ and $f'(x_0)=0$, then the condition $f''(x_0)\neq 0$ implies that there is a neighborhood $J$ of $x_0$ such that $f'(x)\neq 0$ for each $x$ in $J\setminus\{x_0\}$. From this we deduce that the non-degenerate critical points inside continuity pieces are isolated. For this reason, we define the {\em internal critical points set} of $f\in\mathcal{PC}^r_N(X)$, $r\geq1$, by
\[
\Crit_{\text{int}}(f):=\big\{x\in X\setminus(\Delta_f\cup\{c_0,c_N\}) \ :\ f'(x)=0\big\}.
\]
The elements of $\Crit_{\text{int}}(f)$ are called {\em internal critical points} of $f$. It is clear that $\Crit(f)=\Crit_{\text{int}}(f)\cup\big(\Delta_f\cup\{c_0,c_N\}\big)$. The following result is deduced directly from Corollary \ref{NDCRITICAL.2}.

\begin{corollary}\label{PC.CRIT.POINT}
Let $r\geq2$ and $f\in\mathcal{PC}_N^r(X)$ be such that $\#\Crit(f)<\infty$, and assume that for every $i\in\{1,\ldots,N\}$ and $x\in\overline{X_i(f)}$,
\[
\big|f_i'(x)\big|+\big|f_i''(x)\big|>0.
\]
Then, there exists $\epsilon>0$ such that {\em $\#\Crit_{\text{int}}(f)\leq \#\Crit_{\text{int}}(g)\leq \#\Crit_{\text{int}}(f)+2N$} for every $g\in B^r(f;\epsilon)$. If in addition $f_s'(c_j^f)\neq0$ for every $s\in\{j,j+1\}$ and $j\in\{0,\ldots,N-1\}$, then there exists $\epsilon>0$ such that $\#\Crit(f)=\#\Crit(g)$ for every $g\in B^r(f;\epsilon)$.
\end{corollary}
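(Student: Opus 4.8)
The plan is to reduce the statement to Corollary~\ref{NDCRITICAL.2} by \emph{decoupling} $f$ into the $N$ continuous extensions $f_1,\dots,f_N$ of the restrictions of $f$ to its continuity pieces, treating each $f_i$ as an element of $\mathbb{FI}(X;X)$. First I would translate the hypotheses: by Remark~\ref{REMARK1} each $f_i:\overline{X_i(f)}\to X$ lies in $\mathbb{FI}(X;X)$ and, since $r\geq2$, is of class $\mathcal C^2$; the standing assumption $|f_i'(x)|+|f_i''(x)|>0$ on $\overline{X_i(f)}$ is exactly \eqref{SUMASUMA} for $f_i$; and $\#\Crit(f)<\infty$ forces $\#\Crit(f_i)<\infty$. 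So Corollary~\ref{NDCRITICAL.2} applies to each $f_i$ and yields $\epsilon_i>0$ controlling $\#\big(\Crit(h)\cap\Int(I_h)\big)$ for every $\mathcal C^2$ map $h\in\mathbb{FI}(X;X)$ with $\chi(f_i,h)+\chi(f_i',h')+\chi(f_i'',h'')<\epsilon_i$.

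The heart of the argument is to show that $\comp^r$-closeness of $g\in\mathcal{PC}_N^r(X)$ to $f$ forces, piece by piece, the corresponding $\chi$-closeness of $g_i$ to $f_i$. The key observation is that the affine homeomorphism $\xi_i(f,g;\cdot)$ from Definition~\ref{METRICS}, which carries $\overline{X_i(f)}$ onto $\overline{X_i(g)}$, is precisely the deformation map appearing in $\chi$ restricted to the $i$-th piece, because $f_i^{(j)}$ and $g_i^{(j)}$ have domains $\overline{X_i(f)}$ and $\overline{X_i(g)}$ for every $j\in\{0,1,2\}$. Hence
\[
\chi\big(f_i^{(j)},g_i^{(j)}\big)=\dist_{\mathcal H}\big(\overline{X_i(f)},\overline{X_i(g)}\big)+\max_{x\in\overline{X_i(f)}}\big|f_i^{(j)}(x)-g_i^{(j)}\!\circ\xi_i(f,g;x)\big| ,
\]
where, since $c_0$ and $c_N$ are fixed, $\dist_{\mathcal H}\big(\overline{X_i(f)},\overline{X_i(g)}\big)=\max\{|c_{i-1}^f-c_{i-1}^g|,|c_i^f-c_i^g|\}\leq\comp^r(f,g)$ and each maximum term is a single summand of $\comp^r(f,g)$. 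Thus $\comp^r(f,g)<\epsilon$ implies $\chi(f_i,g_i)+\chi(f_i',g_i')+\chi(f_i'',g_i'')<6\epsilon$ for all $i$, and choosing $\epsilon:=\tfrac{1}{6}\min_{1\leq i\leq N}\epsilon_i$ makes Corollary~\ref{NDCRITICAL.2} applicable to every pair $(f_i,g_i)$ with $g\in B^r(f;\epsilon)$.

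It remains to add up over pieces. Since $X_1(f),\dots,X_N(f)$ are pairwise disjoint with union $X\setminus(\Delta_f\cup\{c_0,c_N\})$ and $f_i'\equiv f'$ on $X_i(f)$, one has $\#\Crit_{\text{int}}(f)=\sum_{i=1}^N\#\big(\Crit(f_i)\cap\Int(I_{f_i})\big)$, and likewise for $g$; summing the estimates from Corollary~\ref{NDCRITICAL.2} over $i$ gives $\#\Crit_{\text{int}}(f)\leq\#\Crit_{\text{int}}(g)\leq\#\Crit_{\text{int}}(f)+2N$ for every $g\in B^r(f;\epsilon)$, which is the first assertion. For the second, note that by definition every point of $\Delta_h\cup\{c_0,c_N\}$ is a critical point of any $h\in\mathcal{PC}_N^r(X)$, so $\#\Crit(h)=\#\Crit_{\text{int}}(h)+(N+1)$ whenever $\#\Crit_{\text{int}}(h)<\infty$; hence $\#\Crit(g)=\#\Crit(f)$ is equivalent to $\#\Crit_{\text{int}}(g)=\#\Crit_{\text{int}}(f)$. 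The additional hypothesis that the one-sided limits of $f'$ at the points $c_j^f$ (and at $c_0,c_N$) are nonzero says exactly that no endpoint of any $\overline{X_i(f)}$ is an internal critical point of $f_i$, i.e. $f_i'\neq0$ on $\partial I_{f_i}$; the second half of Corollary~\ref{NDCRITICAL.2} then furnishes $\epsilon_i'>0$ for which the internal critical point count on the $i$-th piece is \emph{equal}, and with $\epsilon:=\tfrac{1}{6}\min_i\epsilon_i'$ the sums match, giving $\#\Crit(g)=\#\Crit(f)$ for all $g\in B^r(f;\epsilon)$.

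The only genuine obstacle I anticipate is the middle step: one must check carefully that the linear deformations built into $\comp^r$ are literally the $\chi$-deformations of the individual continuity pieces (together with the matching domination of the Hausdorff term), so that proximity really decouples across pieces. Everything else is additivity of the critical point count over the disjoint continuity pieces and the routine bookkeeping separating interior critical points from the automatically critical partition and boundary points.
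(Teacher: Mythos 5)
Your proposal is correct and follows exactly the route the paper intends: the paper states that Corollary \ref{PC.CRIT.POINT} ``is deduced directly from Corollary \ref{NDCRITICAL.2}'' and omits the details, and your write-up supplies precisely that deduction --- identifying $\xi_i(f,g;\cdot)$ with the $\chi$-deformation on each piece, bounding $\chi(f_i^{(j)},g_i^{(j)})\leq 2\comp^r(f,g)$ termwise, and summing the piecewise critical-point counts. The only caveat is cosmetic: the paper's boundary hypothesis as literally indexed omits $f_N'(c_N)$, and you (reasonably, and consistently with the theorem stated in the introduction) read it as covering all endpoints of all $\overline{X_i(f)}$.
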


\section{Application 2: Genericity of PC-maps that admit an invariant Borel probability measure}\label{sec6}

B. Pires proved in \cite{P16} that PC${}^0$-maps {\em without connections} (see Definition \ref{NO.CONECCTIONS} below) admit an invariant Borel probability measure. PC${}^0$-maps without connections have been used to establish other important results in the theory --such as the spectral decomposition of the attractor of piecewise contracting interval maps (see \cite{CCG20})-- so it is important to know if this condition is typical.

\begin{definition}\label{NO.CONECCTIONS}\em
We say that $f\in\mathcal{PC}_N^r(X)$ {\em has no connections} if
\[
D_f:=\big\{f_s(c_j^f)\ :\ s\in\{j,j+1\}\,,\;0\leq j\leq N-1\big\}\subset \bigcap_{j=0}^\infty f^{-j}(X\setminus\Delta_f).
\]
\end{definition}
\noindent In other words, PC${}^0$-maps without connections satisfy that the orbit of one-sided limits at boundaries of continuity pieces do not intersect the set of boundaries of continuity pieces again. With this condition, we can state the result that Pires proved:

\begin{theorem}[(B. Pires, 2016 - \cite{P16})]\label{PIRES}
If $f\in\mathcal{PC}_N^0(X)$ is a PC-map without connections, then $f$ admits an invariant Borel probability measure.
\end{theorem}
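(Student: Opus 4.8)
The plan is to prove Theorem~\ref{PIRES} by the Krylov--Bogolyubov procedure: form empirical measures along a well-chosen orbit, extract a weak-$*$ limit $\mu$, and then verify (i) that $\mu(\Delta_f)=0$ and (ii) that $f_*\mu=\mu$. The no-connections hypothesis is used only for (i), and that is where essentially all the work lies. First I would record the elementary consequences of Definition~\ref{NO.CONECCTIONS}: since $N\geq2$ we have $\Delta_f\neq\emptyset$, hence $D_f\neq\emptyset$; taking $j=0$ in $\bigcap_{j\geq0}f^{-j}(X\setminus\Delta_f)$ gives $D_f\subseteq X\setminus\Delta_f$, so that $\rho_0:=\min\{|d-c|:d\in D_f,\ c\in\Delta_f\}>0$; and for each $d\in D_f$ the forward orbit $d,f(d),f^2(d),\dots$ is well defined and stays in $X\setminus\Delta_f$ (induction on the iterate, using $f^{k-1}(d)\notin\Delta_f$). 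Fix some $x_0\in D_f$, set $\mu_n:=\tfrac1n\sum_{k=0}^{n-1}\delta_{f^k(x_0)}\in\mathcal P(X)$, and by weak-$*$ compactness of $\mathcal P(X)$ pass to a subsequence with $\mu_{n_\ell}\to\mu$.

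The crucial estimate is $\mu(\Delta_f)=0$. Fix $M\in\mathbb N$. Because $D_f$ is finite and, by the no-connections hypothesis, each of the finitely many orbit segments $d,f(d),\dots,f^M(d)$ ($d\in D_f$) lies in $X\setminus\Delta_f$, the map $f$ is genuinely continuous at every point $f^m(d)$, $0\leq m<M$. A routine finite-shadowing argument (continuity of $f$ at those finitely many points, continuity of the finitely many one-sided extensions $f_s$ at the endpoints of the pieces, and induction on $m$) then yields constants $\rho_M\in(0,\rho_0]$ and a small $\epsilon_M\in(0,\rho_M/2)$ (small enough that an $\epsilon_M$-neighbourhood of a point of $\Delta_f$ meets only the two adjacent continuity pieces) with the following property: whenever $f^k(x_0)$ is within $\epsilon_M$ of some $c_j^f$, say $f^k(x_0)\in X_s(f)$ with $s\in\{j,j+1\}$, then for $1\leq m\leq M$ the iterate $f^{k+m}(x_0)$ lies within $\rho_M/2$ of $f^{m-1}\big(f_s(c_j^f)\big)$, hence at distance $\geq\rho_M/2$ from $\Delta_f$. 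Writing $A:=\bigcup_{j}(c_j^f-\epsilon_M,\,c_j^f+\epsilon_M)\supseteq\Delta_f$, this says exactly that every visit of the orbit of $x_0$ to $A$ is followed by at least $M$ consecutive non-visits, so $\mu_n(A)\leq\frac1{M+1}+\frac1n$ for all $n$. Letting $n=n_\ell\to\infty$ and applying the portmanteau inequality to the open set $A$ gives $\mu(\Delta_f)\leq\mu(A)\leq\frac1{M+1}$; as $M$ is arbitrary, $\mu(\Delta_f)=0$.

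For invariance, let $\phi:X\to\mathbb R$ be continuous and set $h:=\phi\circ f$ on $X\setminus\Delta_f$ and $h:=0$ on $\Delta_f$. Then $h$ is bounded, Borel, and continuous off the finite set $\Delta_f$, so its discontinuity set is $\mu$-null, and the portmanteau theorem (in the form for bounded Borel integrands whose discontinuity set is $\mu$-null) gives $\int h\,d\mu_{n_\ell}\to\int h\,d\mu$. Since the orbit of $x_0$ avoids $\Delta_f$ we have $\int h\,d\mu_{n_\ell}=\tfrac1{n_\ell}\sum_{k=0}^{n_\ell-1}\phi\big(f^{k+1}(x_0)\big)$, which differs from $\int\phi\,d\mu_{n_\ell}$ by the telescoping term $\tfrac1{n_\ell}\big(\phi(f^{n_\ell}(x_0))-\phi(x_0)\big)\to0$. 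Hence $\int\phi\circ f\,d\mu=\int h\,d\mu=\int\phi\,d\mu$ for every continuous $\phi$; together with $\mu(\Delta_f)=0$ this is precisely $f_*\mu=\mu$, so $\mu$ is the desired invariant Borel probability measure.

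The step I expect to be the main obstacle is the estimate $\mu(\Delta_f)=0$. For a general piecewise continuous interval map the pushforward $f_*$ fails to be weak-$*$ continuous and empirical measures may concentrate on the discontinuities (there are such maps with no invariant probability measure at all), so one needs a concrete mechanism preventing the orbit from spending a positive asymptotic fraction of its time near $\Delta_f$. The no-connections hypothesis provides exactly this: after each near-approach to a discontinuity $c_j^f$ the orbit lands near a one-sided limit $f_s(c_j^f)\in D_f$, and because the \emph{entire} forward $f$-orbit of $f_s(c_j^f)$ stays away from $\Delta_f$, the orbit of $x_0$ then shadows that discontinuity-free orbit for as many steps $M$ as we wish before it can return. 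Producing the pair $(\rho_M,\epsilon_M)$ uniformly over the finitely many relevant model orbits is the only genuinely technical point; everything else is standard Krylov--Bogolyubov bookkeeping together with the portmanteau theorem.
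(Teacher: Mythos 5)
Theorem~\ref{PIRES} is not proved in this paper at all: it is imported verbatim from Pires \cite{P16} and used as a black box in Section~\ref{sec6}, so there is no internal proof to compare yours against. Judged on its own, your argument is correct and self-contained. The overall scheme is the direct Krylov--Bogolyubov route on $X$ itself: the no-connections condition guarantees that some (indeed any) $x_0\in D_f$ has its entire forward orbit in $X\setminus\Delta_f$, the empirical measures along that orbit have a weak-$*$ limit $\mu$, and the only genuinely nonstandard step is $\mu(\Delta_f)=0$. Your mechanism for that step is sound: finiteness of $D_f$ and $\Delta_f$ makes $\rho_M=\min\{|f^m(d)-c|: d\in D_f,\ 0\le m<M,\ c\in\Delta_f\}$ strictly positive; continuity of the lateral extensions $f_s$ at $c_j^f$ together with continuity of $f$ at each of the finitely many points $f^m(d)$ (available because a ball of radius $\rho_M$ about such a point lies in a single continuity piece) yields the $\epsilon_M$ with the ``one visit to $A_M$ forces $M$ consecutive non-visits'' property, hence $\mu_n(A_M)\le \tfrac1{M+1}+\tfrac1n$, and portmanteau on the open set $A_M\supset\Delta_f$ finishes it. The invariance step, via the mapping theorem for a bounded Borel integrand whose discontinuity set is $\mu$-null plus the telescoping identity (which again uses that the orbit never meets $\Delta_f$), is standard and correctly applied. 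For comparison, the published proof in \cite{P16} proceeds differently: it builds a compactification of the interval (doubling points along the orbits of the lateral limits) on which $f$ becomes genuinely continuous, applies Krylov--Bogolyubov upstairs, and pushes the measure down; your version avoids the auxiliary space at the price of the quantitative shadowing estimate, and is arguably more elementary. Two cosmetic remarks only: $D_f\neq\emptyset$ holds automatically (it always contains $f_1(c_0)$), independently of whether $\Delta_f$ contains genuine discontinuities; and in the shadowing step one should record explicitly that $\epsilon_M<\rho_M/2$, so that iterates lying within $\rho_M/2$ of points at distance at least $\rho_M$ from $\Delta_f$ indeed avoid $A_M$ --- which you do state.
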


\noindent Thus, showing the genericity of PC${}^0$-maps without connections allows us to conclude the genericity of PC${}^0$-maps that admit invariant Borel probability measures. As we will see, we will need to restrict the collection to the subspace of piecewise Lipschitz maps:
\setlength \arraycolsep{2pt}
\begin{eqnarray*}
\mathcal{P\!L}_N(X)&:=&\big\{f\in\mathcal{PC}_N^0(X) \ :\ \exists\,\lambda>0\,\text{ s.t. }\, |f(x)-f(y)|\leq\lambda\,|x-y|\\
&&\hspace*{6.4cm}\forall\,x,y\in X_i(f)\,,\; 1\leq i\leq N\big\}.
\end{eqnarray*}
Thus, for $f\in\mathcal{P\!L}_N(X)$ we define the {\em Lipschitz constant} of $f$ by
\[
\lambda_f:=\max_{1\leq i\leq N}\left[\sup_{\begin{subarray}{c}\scriptscriptstyle x,y\in X_i(f)\\ x\neq y\end{subarray}}\frac{|f(x)-f(y)|}{|x-y|}\right]\!.
\]

\begin{lemma}\label{MISMA.PIEZA3}
Let $\ell\geq1$, $f,g\in\mathcal{P\!L}_N(X)$ and $\epsilon>0$ be such that for every $k\in\{0,\ldots,\ell-1\}$, $f^k(D_f)\cap\Delta_f=\emptyset$, $g^k(D_g)\cap\Delta_g=\emptyset$ and
\[
\comp^r(f,g)< \epsilon < \frac{\min\!\big\{|c-f^k(d)|\ :\ c\in \Delta_f\,,\;d\in D_f \text{ and } k\in\{0,\ldots,\ell-1\}\big\}}{
2\big(1+\lambda_f+\lambda_f^2+\cdots+\lambda_f^{\ell-1}\big)}.
\]
Then, given $i\in\{1,\ldots,N-1\}$, for every $k\in\{0,\ldots,\ell-1\}$ we have 
\begin{equation}
\max_{s\in\{i,i+1\}}\Big\{\big|f^{k}\big(f_s(c_i^f)\big)-\xi\big(g,f;g^{k}(g_s(c_i^g))\big)\big|\Big\}<2\epsilon\sum_{j=0}^{k} \lambda_f^j,\label{2ESUM}
\end{equation}
and for every $j\in\{0,\ldots,k\}$ and $s\in\{i,i+1\}$, the points $f^{j}\big(f_s(c_i^f)\big)$ and $\xi\big(g,f;g^{j}(g_s(c_i^g))\big)$ lie in same continuity piece of $f$. Also,
\begin{equation}
\max_{s\in\{i,i+1\}}\Big\{\big|f^{k+1}\big(f_s(c_i^f)\big)-\xi\big(g,f;g^{k+1}(g_s(c_i^g))\big)\big|\Big\}<2\epsilon\sum_{j=0}^{k+1} \lambda_f^j.\label{2ESUM2}
\end{equation}
\end{lemma}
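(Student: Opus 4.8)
The plan is to prove the three displayed inequalities by induction on $k\in\{0,\ldots,\ell-1\}$, where the inductive hypothesis for a given $k$ packages together: (i) the estimate \eqref{2ESUM}, and (ii) the claim that $f^{j}(f_s(c_i^f))$ and $\xi(g,f;g^{j}(g_s(c_i^g)))$ lie in the same continuity piece of $f$ for all $j\le k$ and $s\in\{i,i+1\}$. The statement \eqref{2ESUM2} is then the inductive step that produces the hypothesis for $k+1$; we run this step only as long as $k+1\le\ell-1$, but the final instance \eqref{2ESUM2} for $k=\ell-1$ is also part of what we must record. The base case $k=0$ is handled by observing that $f_s(c_i^f)-\xi(g,f;g_s(c_i^g)) = f_s(c_i^f)-\xi(g,f;\,g_s(c_i^g))$ is controlled directly by $\comp^r(f,g)<\epsilon$: indeed the term $\max_{x\in\overline{X_s(g)}}|g_s(x)-f_s\circ\xi(g,f;x)|$ appears in the sum defining $\comp^0(g,f)=\comp^0(f,g)$, and combining this with $|c_i^g-c_i^f|<\epsilon$ and property {\bf (P5)} gives the bound $2\epsilon = 2\epsilon\lambda_f^0$; the same-piece claim at $j=0$ follows because $f_s(c_i^f)$ is at distance $\ge 2\epsilon(1+\cdots+\lambda_f^{\ell-1})> 2\epsilon$ from $\Delta_f$ by the smallness hypothesis on $\epsilon$, while its companion point is within $2\epsilon$ of it.

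For the inductive step, suppose \eqref{2ESUM} and the same-piece property hold up to level $k$. Write $u_j := f^{j}(f_s(c_i^f))$ and $v_j := \xi(g,f;g^{j}(g_s(c_i^g)))$. Since $u_k$ and $v_k$ lie in the same continuity piece, say $X_t(f)$, of the Lipschitz map $f$, and since $\xi(g,f;\cdot)$ conjugates the piece structure of $g$ to that of $f$ by {\bf (P6)}, we can write $f^{k+1}(f_s(c_i^f)) = f_t(u_k)$ and relate $\xi(g,f;g^{k+1}(g_s(c_i^g)))$ to $g$ applied to $g^{k}(g_s(c_i^g))$ and pushed forward by $\xi(g,f;\cdot)$. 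The key estimate is the telescoping bound
\[
|u_{k+1}-v_{k+1}| \;\le\; \big|f_t(u_k)-f_t(v_k)\big| \;+\; \big|f_t(v_k)-\xi\big(g,f;\,g_t^{\,\text{(piece)}}(g^{k}(g_s(c_i^g)))\big)\big|.
\]
The first summand is $\le \lambda_f\,|u_k-v_k| < \lambda_f\cdot 2\epsilon\sum_{j=0}^{k}\lambda_f^j = 2\epsilon\sum_{j=1}^{k+1}\lambda_f^j$ by the Lipschitz property (valid because both points are in the same piece) and the inductive hypothesis. The second summand is a ``one-step'' comparison between $f$ and $g$ measured through $\xi$, and is bounded by $2\epsilon = 2\epsilon\lambda_f^0$ exactly as in the base case, using $\comp^0(f,g)<\epsilon$ together with {\bf (P5)}; here one must be slightly careful to absorb the conjugating homeomorphism correctly, which is where properties {\bf (P1)}--{\bf (P4)} of $\xi$ (its compatibility with composition) get used. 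Adding the two summands gives $|u_{k+1}-v_{k+1}| < 2\epsilon\sum_{j=0}^{k+1}\lambda_f^j$, which is \eqref{2ESUM2}; and this same quantity is, by the hypothesis on $\epsilon$ (the denominator $2(1+\lambda_f+\cdots+\lambda_f^{\ell-1})$ is precisely tuned so that $2\epsilon\sum_{j=0}^{\ell-1}\lambda_f^j$ stays below $\min\{|c-f^k(d)|\}$), strictly less than the distance from $u_{k+1}=f^{k+1}(f_s(c_i^f))\in f^{k+1}(D_f)$ to $\Delta_f$; hence $v_{k+1}$ lies in the same continuity piece of $f$ as $u_{k+1}$, closing the induction.

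The main obstacle is the bookkeeping around the conjugating map $\xi(g,f;\cdot)$ in the ``one-step'' term: one needs the identity $f_t \circ \xi(g,f;\cdot) $-type relations correctly, i.e. that evaluating $g$ on $X_t(g)$ and then transporting by $\xi(g,f;\cdot)$ agrees, up to an error controlled by $\comp^0(f,g)$, with evaluating $f$ on $X_t(f)$ after transporting the argument — and this requires knowing from the start that the argument point $g^{k}(g_s(c_i^g))$ actually lies in $X_t(g)$, which is exactly the content of the same-piece part of the inductive hypothesis (via {\bf (P6)}, since $v_k\in X_t(f)$ iff $g^k(g_s(c_i^g))\in X_t(g)$). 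Keeping the indices $s\in\{i,i+1\}$, the piece index $t$ (which changes with $k$), and the two maps $f,g$ straight is the delicate part; the analytic content is just the geometric-series Lipschitz estimate, which is routine once the setup is correct.
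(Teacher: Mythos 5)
Your proposal is correct and follows essentially the same route as the paper: induction on $k$, with the base case bounded by the $\comp^r$-term plus property \textbf{(P5)} giving $2\epsilon$, the inductive step via the Lipschitz estimate on the same continuity piece plus the same one-step $2\epsilon$ comparison (your two-term split, with the second term further decomposed as in the base case, is exactly the paper's three-term telescoping), and the same-piece claim deduced from the tuning of the denominator $2(1+\lambda_f+\cdots+\lambda_f^{\ell-1})$. The bookkeeping points you flag (using \textbf{(P6)} to know $g^k(g_s(c_i^g))$ lies in the matching piece of $g$, and that the final instance of \eqref{2ESUM2} at $k=\ell-1$ needs no same-piece claim at level $\ell$) are handled identically in the paper.
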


\begin{proof}
Fix $i\in\{1,\ldots,N-1\}$ and $\ell\geq 2$ (if $\ell=1$, the proof is reduced to step 1 of the following induction). We proceed by induction on $k\in\{0,\ldots,\ell-1\}$. Note that
\begin{eqnarray}
\big|f_s(c_i^f)-\xi\big(g,f;g_s(c_i^g)\big)\big|&\leq& \big|f_s(c_i^f)-g_s\circ \xi\big(f,g;c_i^f\big)\big| + \big|g_s(c_i^g)-\xi\big(g,f;g_s(c_i^g)\big)\big|\nonumber \\
&<&2\epsilon\leq 2\epsilon\sum_{j=0}^{\ell-1} \lambda_f^j,\label{FF1}
\end{eqnarray}
where $s\in\{i,i+1\}$. Since 
\[
2\epsilon\sum_{j=0}^{\ell-1} \lambda_f^j < \min\!\big\{|c-f^j(d)|\ :\ c\in \Delta_f\,,\;d\in D_f \text{ and } j\in\{0,\ldots,\ell-1\}\big\},
\]
from \eqref{FF1} we conclude that $f_s(c_i^f)$ and $\xi\big(g,f;g_s(c_i^g)\big)$ belong to same continuity piece of $f$ \big(note that $f_s(c_i^f)$ belongs to $X_{j}(f)$ if and only if $g_s(c_i^g)$ belongs to $X_{j}(g)$, for some $j\in\{1,\ldots,N\}$\big). Also, we have 
\setlength \arraycolsep{2pt}
\begin{eqnarray*}
\big|f\big(f_s(c_i^f)\big)-\xi\big(g,f;g(g_s(c_i^g))\big)\big|&\leq& \big|f\big(f_s(c_i^f)\big)-f\circ \xi\big(g,f;g_s(c_i^g)\big)\big|\\ 
&& \hspace*{1cm}+\big|f\circ \xi\big(g,f;g_s(c_i^g)\big)-g\big(g_s(c_i^g)\big)\big|\\
&&\hspace*{2.2cm}+\big|g\big(g_s(c_i^g)\big)-\xi\big(g,f;g(g_s(c_i^g))\big)\big|\\
&\leq&\lambda_f \big|f_s(c_i^f)- \xi\big(g,f;g_s(c_i^g)\big)\big|+2\epsilon<2\epsilon(\lambda_f+1),
\end{eqnarray*}
which prove the case $k=0$. Now, we assume that for $k$ (such that $k\leq\ell-2$) are valid the following statements:
\begin{itemize}
    \item \eqref{2ESUM} and \eqref{2ESUM2} are satisfied for every $j\in\{0,\ldots,k\}$ and $s\in\{i,i+1\}$;
    \item For every $j\in\{0,\ldots,k\}$ and $s\in\{i,i+1\}$, the points $f^{j}\big(f_s(c_i^f)\big)$ and $\xi\big(g,f;g^{j}(g_s(c_i^g))\big)$ belong to same continuity piece of $f$.
\end{itemize}
From \eqref{2ESUM2}, for every $s\in\{i,i+1\}$,
\[
\big|f^{k+1}\big(f_s(c_i^f)\big)-\xi\big(g,f;g^{k+1}(g_s(c_i^g))\big)\big|\leq 2\epsilon\sum_{j=0}^{k+1} \lambda_f^j\leq 2\epsilon\sum_{j=0}^{\ell-1} \lambda_f^j,
\]
and we deduce that $f^{k+1}\big(f_s(c_i^f)\big)$ and $\xi\big(g,f;g^{k+1}(g_s(c_i^g))\big)$ belong to same continuity piece of $f$ \big(note that, $f^{k+1}\big(f_s(c_i^f)\big)$ belongs to $X_{j}(f)$ if and only if $g^{k+1}\big(g_s(c_i^g)\big)$ belongs to $X_{j}(g)$, for some $j\in\{1,\ldots,N\}$\big). Also, are valid the following inequalities:
\setlength \arraycolsep{2pt}
\begin{eqnarray*}
\big|f^{k+2}\big(f_s(c_i^f)\big)-\xi\big(g,f;g^{k+2}(g_s(c_i^g))\big)\big|&\leq& \big|f^{k+2}\big(f_s(c_i^f)\big)-g^{k+2}\big(g_s(c_i^g)\big)\big|\\
&&\hspace*{.3cm}+\big|g^{k+2}\big(g_s(c_i^g)\big)-\xi\big(g,f;g^{k+2}(g_s(c_i^g))\big)\big| \nonumber \\
&\leq& \big|f^{k+2}\big(f_s(c_i^f)\big)-f\circ\xi\big(g,f;g^{k+1}(g_s(c_i^g))\big)\big| \nonumber \\
&&\hspace*{-.8cm}+\big|f\circ\xi\big(g,f;g^{k+1}(g_s(c_i^g))\big)-g^{k+2}\big(g_s(c_i^g)\big)\big|+\epsilon \nonumber \\
&<& 2\epsilon\lambda_f\sum_{j=0}^{k+1} \lambda_f^j+2\epsilon\;=\;2\epsilon\sum_{j=0}^{k+2} \lambda_f^j,
\end{eqnarray*}
where this last inequality is deduced from \eqref{2ESUM2} and since $f^{k+1}\big(f_s(c_i^f)\big)$ and $\xi\big(g,f;g^{k+1}(g_s(c_i^g))\big)$ belong to same continuity piece of $f$. Thus, we prove the result for $k+1$.
\end{proof}

\begin{lemma}\label{NO.INTERSECTA}
Let $n\geq1$, $f,g\in\mathcal{PC}^r_N(X)$ and $\epsilon>0$ be such that for every $k\in\{0,\ldots,n-1\}$, $f^k(D_f)\cap\Delta_f=\emptyset$ and
\begin{equation}
\comp^r(f,g)< \epsilon < \frac{\min\!\big\{|c-f^k(d)|\ :\ c\in \Delta_f\,,\;d\in D_f \text{ and } k\in\{0,\ldots,n-1\}\big\}}{
4\big(1+\lambda_f+\lambda_f^2+\cdots+\lambda_f^{n-1}\big)}.\label{EPSILON.CHICO}
\end{equation}
Then, for every $k\in\{0,\ldots,n-1\}$ we have that $g^k(D_g)\cap\Delta_g=\emptyset$. In particular, for any $n\geq1$ the set
\begin{equation*}
\mathcal F_n:=\big\{f\in\mathcal{PC}_N^r(X)\ :\ f^{j}(D_f)\cap\Delta_f=\emptyset\quad\forall\,j\in\{0,\ldots,n-1\}\big\}
\end{equation*}
is open in $\mathcal{PC}_N^r(X)$.
\end{lemma}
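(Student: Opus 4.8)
The strategy is to transfer the orbit-estimate of Lemma~\ref{MISMA.PIEZA3} to the present, slightly weaker, hypothesis and then read off openness. The key observation is that the smallness condition \eqref{EPSILON.CHICO} is exactly \emph{half} of the one appearing in Lemma~\ref{MISMA.PIEZA3} (the denominator has a factor $4$ instead of $2$), which will give us the room needed to run the induction \emph{without} assuming a priori that $g^k(D_g)\cap\Delta_g=\emptyset$.

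\emph{First step.} I would argue by induction on $k\in\{0,\ldots,n-1\}$ the statement: for every $i\in\{1,\ldots,N-1\}$ and $s\in\{i,i+1\}$, the point $\xi\big(g,f;g^{k}(g_s(c_i^g))\big)$ lies in the same continuity piece of $f$ as $f^{k}\big(f_s(c_i^f)\big)$, and moreover
\[
\max_{s\in\{i,i+1\}}\Big|f^{k}\big(f_s(c_i^f)\big)-\xi\big(g,f;g^{k}(g_s(c_i^g))\big)\Big|<2\epsilon\sum_{j=0}^{k}\lambda_f^j.
\]
The base case $k=0$ is inequality~\eqref{FF1} from the proof of Lemma~\ref{MISMA.PIEZA3}, together with the fact that $2\epsilon\sum_{j=0}^{n-1}\lambda_f^j$ is strictly less than the relevant distance $\min\{|c-f^k(d)|\}$ (here the factor $4$ in \eqref{EPSILON.CHICO} is more than enough). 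For the inductive step, the crucial point is that once $f^{k}\big(f_s(c_i^f)\big)$ and $\xi\big(g,f;g^{k}(g_s(c_i^g))\big)$ are known to lie in the same continuity piece of $f$, the local $\lambda_f$-Lipschitz estimate for $f$ on that piece applies, and the telescoping computation at the end of the proof of Lemma~\ref{MISMA.PIEZA3} goes through verbatim to yield the bound $2\epsilon\sum_{j=0}^{k+1}\lambda_f^j$ at level $k+1$; then, since this bound is again below $\min\{|c-f^j(d)|\}$, the two level-$(k+1)$ iterates must again lie in a common continuity piece of $f$, and by property {\bf (P6)} of $\xi$ the corresponding $g$-iterates lie in a common continuity piece of $g$. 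What makes the argument self-contained here—unlike in Lemma~\ref{MISMA.PIEZA3}—is that we never invoke $g^k(D_g)\cap\Delta_g=\emptyset$ as a hypothesis: we derive ``$g^k(g_s(c_i^g))$ is not a discontinuity of $g$'' as a \emph{conclusion} of the fact that $\xi(g,f;\cdot)$ carries it into the interior of a continuity piece of $f$.

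\emph{Second step: openness of $\mathcal F_n$.} The first step shows precisely that if $f\in\mathcal F_n$ and $g$ satisfies \eqref{EPSILON.CHICO}, then $g^k(g_s(c_i^g))\notin\Delta_g$ for every $k\in\{0,\ldots,n-1\}$ and every boundary-limit $g_s(c_i^g)\in D_g$; since $D_g$ is exactly the finite set of these one-sided limits, this says $g^k(D_g)\cap\Delta_g=\emptyset$, i.e. $g\in\mathcal F_n$. Thus, given $f\in\mathcal F_n$, the quantity $\epsilon_f:=\tfrac12\cdot\min\{|c-f^k(d)|:c\in\Delta_f,\ d\in D_f,\ 0\le k\le n-1\}\big/\big(4(1+\lambda_f+\cdots+\lambda_f^{n-1})\big)$ is strictly positive (the minimum is over a finite set of nonzero numbers, as $f\in\mathcal F_n$), and $B^r(f;\epsilon_f)\subset\mathcal F_n$. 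Hence $\mathcal F_n$ is open.

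\emph{Expected main obstacle.} The only delicate point is the bookkeeping in the inductive step: one must be careful that the Lipschitz estimate for $f$ is applied on the continuity piece that has \emph{already} been shown (at the previous level, or within the same level before taking one more iterate) to contain both iterates, and that the error accumulated under one application of $f$ is the previous error multiplied by $\lambda_f$ \emph{plus} the new $\comp^r$-error $\le 2\epsilon$ coming from the triangle-inequality splitting $|f\circ\xi(g,f;\cdot)-g(\cdot)|+|g(\cdot)-\xi(g,f;g(\cdot))|$, exactly as in the displayed chain of inequalities in the proof of Lemma~\ref{MISMA.PIEZA3}. Since the whole point of the factor $4$ (rather than $2$) in \eqref{EPSILON.CHICO} is to absorb this while keeping a safety margin below the collision distance, no new idea is needed beyond carefully re-running that computation; I would therefore keep the write-up short and simply reference the estimates \eqref{2ESUM}--\eqref{2ESUM2} of Lemma~\ref{MISMA.PIEZA3}, noting that the hypothesis on $g$ there is not used in the induction but only re-derived.
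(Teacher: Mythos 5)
Your proposal is correct and follows essentially the same route as the paper: the paper's proof is also an induction on $k$ that uses the factor $4$ in \eqref{EPSILON.CHICO} to absorb the error terms and bootstraps the conclusion $g^j(D_g)\cap\Delta_g=\emptyset$ level by level so that the estimates \eqref{2ESUM}--\eqref{2ESUM2} of Lemma \ref{MISMA.PIEZA3} become applicable with $\ell=k+1$ at each step, which is exactly your ``derive rather than assume'' bootstrap written inline. The openness argument via the positive radius determined by the finite minimum is identical.
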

\begin{proof}
Let $n\geq 2$. We proceed by induction on $k\in\{0,\ldots,n-1\}$. For every $i,j\in\{1,\ldots,N-1\}$ and $s\in\{i,i+1\}$, we have that
\setlength \arraycolsep{2pt}
\begin{eqnarray*}
4\epsilon<\big|c^f_j-f_s(c_i^f)\big|&\leq& \big|c^f_j-c^{g}_j\big|+\big|f_s(c_i^f)-\xi\big(g,f;g_s(c_i^g)\big)\big|\\
&&\hspace*{2.5cm}+\big|\xi\big(g,f;g_s(c_i^g)\big)-g_s(c_i^g)\big|+\big|c^{g}_j-g_s(c_i^g)\big|.
\end{eqnarray*}
From hypothesis $\comp^r(f,g)<\epsilon$ and Lemma \ref{MISMA.PIEZA3}, we deduce that $0<\big|c^{g}_j-g_s(c_i^g)\big|$. This prove the result for $k=0$. Now, we assume that 
\[
D_g\cap\Delta_g=g(D_g)\cap\Delta_g=\cdots=g^k(D_g)\cap\Delta_g=\emptyset
\]
for $k\leq n-2$. Then, from Lemma \ref{MISMA.PIEZA3} (here, the role of $\ell-1$ in such result is done by $k$), for every $i,j\in\{1,\ldots,N-1\}$ and $s\in\{i,i+1\}$ we have that $f^{k}\big(f_s(c_i^f)\big)$ and $\xi\big(g,f;g^{k}(g_s(c_i^g))\big)$ belong to same continuity piece of $f$, and is valid the inequality \eqref{2ESUM2}. Thus, we have that  
\setlength \arraycolsep{2pt}
\begin{eqnarray*}
4\epsilon\sum_{l=0}^{n-1} \lambda_f^l<\big|c^f_j-f^{k+1}\big(f_s(c_i^f)\big)\big|&\leq& \big|c^f_j-c^{g}_j\big|+\big|f^{k+1}\big(f_s(c_i^f)\big)-g^{k+1}\big(g_s(c_i^g)\big)\big|\\[-2ex]
&&\hspace*{3.85cm}+\big|c^{g}_j-g^{k+1}\big(g_s(c_i^g)\big)\big|\\
&&\hspace{-.8cm}\leq \big|c^f_j-c^{g}_j\big|+\big|f^{k+1}\big(f_s(c_i^f)\big)-\xi\big(g,f;g^{k+1}(g_s(c_i^g))\big)\big|\\
&&\hspace*{1.1cm}+\big|\xi\big(g,f;g^{k+1}(g_s(c_i^g))\big)-g^{k+1}\big(g_s(c_i^g)\big)\big|\\
&& \hspace*{3.88cm}+\big|c^{g}_j-g^{k+1}\big(g_s(c_i^g)\big)\big|\\
&&\hspace{-.8cm}<2\epsilon+2\epsilon\sum_{l=0}^{k+1} \lambda_f^l+\big|c^{g}_j-g^{k+1}\big(g_s(c_i^g)\big)\big|\\
&&\hspace{-.8cm}\leq 2\epsilon+2\epsilon\sum_{l=0}^{n-1} \lambda_f^l+\big|c^{g}_j-g^{k+1}\big(g_s(c_i^g)\big)\big|,
\end{eqnarray*}
which prove the result for $k+1$. Thus, we ending the proof of the first part of Lemma. \\[-2ex]

Finally, note that if $n\geq 1$ and $f\in\mathcal F_n$, its clear that $B^r(f;\epsilon)\subset\mathcal F_n$ for every $\epsilon$ satisfying the condition \eqref{EPSILON.CHICO}, therefore, $\mathcal F_n$ is a open set.
\end{proof}

Note that we have proven that the ``no connections'' condition is $\comp^0$-robust in the subspace $\mathcal{P\!L}_N(X)$. The genericity of this condition is a consequence of the following result:\\

\begin{lemma}\label{FN.LEMMA}
For all $n\geq1$, the set $\mathcal F_n$ from Lemma \ref{NO.INTERSECTA} is dense in $\mathcal{P\!L}_N(X)$. 
\end{lemma}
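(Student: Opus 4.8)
\textbf{Plan for proving Lemma \ref{FN.LEMMA}.}
The plan is to show that, given any $f\in\mathcal{P\!L}_N(X)$ and any $\delta>0$, one can find $g\in\mathcal F_n$ with $\comp^0(f,g)<\delta$. The mechanism is to perturb $f$ so that the finitely many points of the orbit segment
\[
D_f\cup f(D_f)\cup\cdots\cup f^{n-1}(D_f)
\]
are pushed off the set $\Delta_g$ of discontinuities. Since $D_f$ has at most $2N$ points and we only iterate $n-1$ times, this is a finite-data problem, but the iterates of $f$ move when $f$ is perturbed, so the perturbation has to be designed with a little care.

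First I would reduce to a ``one-step'' perturbation lemma: it suffices to show that for each $j\in\{0,\ldots,n-1\}$ and each $\eta>0$ there is an $\eta$-perturbation $g$ of $f$ with $f^k(D_f)\cap\Delta_f=\emptyset$ still holding for $k<j$ being inherited, and with $g^j(D_g)\cap\Delta_g=\emptyset$; then compose $n$ such perturbations, each small enough (using the fact, from Lemma \ref{NO.INTERSECTA}, that the conditions $g^k(D_g)\cap\Delta_g=\emptyset$ for $k<j$ are \emph{open}, so they survive a subsequent small perturbation). For the one-step perturbation I would perturb \emph{only the location of the discontinuities}, i.e.\ move the points $c_i^f$ slightly to new positions $c_i^g$, keeping the graph otherwise essentially fixed via the homeomorphism $\xi$ exactly as in the proof of Lemma \ref{PERT}; this keeps $D_g$ close to $D_f$ and keeps each $g^k$ uniformly close to $f^k$ on the relevant compact sets (here piecewise Lipschitzness is used to control how errors propagate under iteration, with constant $1+\lambda_f+\cdots+\lambda_f^{j-1}$, just as in Lemma \ref{MISMA.PIEZA3}). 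Because $\Delta_g$ now consists of the moved points $c_i^g$ while the orbit points $g^j(d)$ for $d\in D_g$ move continuously and only slightly, a generic choice of the displacement vector $(c_1^g-c_1^f,\ldots,c_{N-1}^g-c_1^f)$ in a small cube avoids the finitely many ``bad'' values for which some $g^j(d)$ equals some $c_i^g$; the bad set is contained in a finite union of lower-dimensional real-analytic (indeed, piecewise-analytic) sets, hence has empty interior.

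The main obstacle is the implicit coupling: moving $c_i^f$ changes $g$ on a whole continuity piece, hence changes $g^j$ there, so the condition ``$g^j(d)\neq c_i^g$'' is not literally ``a point avoids a fixed finite set'' but ``$\Phi_j(\vec c\,)\neq c_i$'' where $\Phi_j$ is the $j$-th iterate evaluated at the perturbed one-sided limit. I would handle this by arguing that $\Phi_j$ depends continuously (and, on a generic full-measure set of parameters where no intermediate iterate hits a discontinuity, piecewise-analytically) on $\vec c$, and that for small displacements $\Phi_j(\vec c\,)$ stays within $2\epsilon\sum\lambda_f^l$ of $f^j(d)$ — which is exactly the estimate packaged in Lemma \ref{MISMA.PIEZA3}. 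Then the set of $\vec c$ in a small cube for which $\Phi_j(\vec c\,)=c_i$ for some $i,j$ and some branch choice is a finite union of proper analytic subvarieties, so its complement is dense; picking $\vec c$ there (and keeping $\|\vec c-\vec c^{\,f}\|$ plus the resulting $\comp^0$-error below $\delta$) produces the desired $g\in\mathcal F_n\cap\mathcal{P\!L}_N(X)$. Iterating over $j=0,1,\ldots,n-1$ and shrinking the allowed perturbation size at each stage (so as not to destroy the openly-held earlier conditions) completes the proof that $\mathcal F_n$ is dense.
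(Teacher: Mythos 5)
Your overall architecture (reduce to finite orbit data, control error propagation by $1+\lambda_f+\cdots+\lambda_f^{j-1}$, use the openness of the partial conditions from Lemma \ref{NO.INTERSECTA} to stack perturbations) is reasonable, but the core one-step density step has a genuine gap, and the specific perturbation you propose provably cannot work. First, the perturbation class is too small. If you only displace the points $c_i^f$ and transport the graph by $\xi$ as in Lemma \ref{PERT}, then $D_g=D_f$ and $g(d)=f\bigl(\xi(g,f;d)\bigr)$: the orbit points are dragged by the very reparametrization that moves the discontinuities, and the two motions can cancel identically. Concretely, take $X=[0,1]$, $N=2$, $c_1^f=\tfrac12$, with $f(y)=\tfrac{1}{8y}$ on $[\tfrac18,\tfrac12)$, $f\equiv1$ on $[0,\tfrac18)$ and $f\equiv\tfrac14$ on $(\tfrac12,1]$. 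Then $d:=f_2(c_1^f)=\tfrac14$ and $f(d)=\tfrac12=c_1^f$, a connection at time $1$; for every displacement $c_1^g=\tfrac12+t$ one computes $\xi(g,f;\tfrac14)=\tfrac{1/8}{1/2+t}$ and hence $g(\tfrac14)=\tfrac12+t=c_1^g$. The connection survives \emph{every} perturbation of your type, so the ``bad set'' of displacement vectors is the whole cube, not a proper subvariety. Second, and independently, the appeal to (piecewise) analyticity is unavailable here: $f$ is only piecewise Lipschitz, so $\vec c\mapsto\Phi_j(\vec c)-c_i$ is only Lipschitz, and level sets of Lipschitz functions need not be nowhere dense; ``a generic choice of $\vec c$ avoids the bad values'' is not justified by any soft dimension count.

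The paper's proof avoids both problems by perturbing in a different way: it sets $g:=f\circ h$ for an increasing homeomorphism $h$ of $X$ (\emph{not} of the special piecewise linear form $\xi$) chosen so that (i) $h$ is uniformly close to the identity, (ii) $h$ \emph{fixes pointwise} the finite set $D^*$ of orbit points $f^j(d)$, $d\in D_f$, up to the first time each orbit meets $\Delta_f$ (capped at $n$), and (iii) $h^{-1}(\Delta_f)$ is disjoint from $\Delta_f\cup D^*$. Then $\Delta_g=h^{-1}(\Delta_f)$, $D_g=D_f$, and $g^k(d)=f^k(d)$ on the relevant range of $k$, so the orbit points do not move at all while the discontinuities move off them; no genericity argument and no control of error propagation is needed. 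In the example above this gives $g(d)=f(h(d))=f(d)=\tfrac12\notin\Delta_g$, breaking the connection in one stroke. To salvage your scheme you must decouple the motion of the discontinuities from the motion of the orbit points, which is exactly what replacing $\xi$ by a homeomorphism equal to the identity on $D^*$ accomplishes.
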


\begin{proof}
Fix $f\in\mathcal{P\!L}_N(X)$, $n\geq1$ and $\epsilon>0$. If $f\in\mathcal F_n$, then it is done. If $f\notin\mathcal F_n$ we have that
\[
\bigcup_{j=0}^{n-1}\big(D_f\cap f^{-j}(\Delta_f)\big)\neq\emptyset.
\]
Define the subsets of $D_f$ given by
\setlength \arraycolsep{2pt}
\begin{eqnarray*}
D_1&:=&\big\{d\in D_f\setminus\Delta_f\ :\ \exists\,k\in\{1,\ldots,n-1\}\text{ such that } f^k(d)\in\Delta_f\big\}\neq\emptyset,\\
D_2&:=&\big\{d\in D_f\ :\ f^k(d)\notin\Delta_f\quad\forall\,k\in\{0,\ldots,n-1\}\big\}.
\end{eqnarray*}
Note that for every $d\in D_1$ there exists $n(d)\in\{0,\ldots,n-1\}$ such that $f^{n(d)}(d)\in\Delta_f$ and such that $f^k(d)\notin\Delta_f$ for every $k\in\{1,\ldots,n(d)-1\}$. If $d\in D_2$ we define $n(d)=n$. Using these notations, we can consider an increasing homeomorphism $h:X\to X$ such that
\begin{enumerate}
\item[$1^\times$.]\quad $\max\limits_{x\in X}|h(x)-x|<\frac{\epsilon}{4N(1+\lambda_f)}$;
\item[$2^\times$.]\quad $h(x)=x\,$ for every $x\in D^*$, where
\[
D^*:=\bigcup_{d\in D_1\cup D_2}\!\bigcup_{j=0}^{\scriptscriptstyle n(d)-1}\big\{f^{j}(d)\big\};
\]
\item[$3^\times$.]\quad $h^{-1}(\Delta_f)\subset X\setminus \big(\Delta_f\cup D^*\big)$.
\end{enumerate}
Then, if we consider $g:=f\circ h\in\mathcal{PC}_N^0(X)$, the continuity pieces of $g$ are the pre-images by $h$ of the continuity pieces of $f$. Moreover, from $1^\times$ follows that
\[
\comp^0(f,g)\leq\sum_{i=1}^{N-1}\big|h(c_i^g)-c_i^g\big|+ \sum_{i=1}^N\,\max_{x\in\overline{X_i(g)}}\lambda_f\big|h (x)-\xi(g,f;x)\big|<\epsilon.
\]
Now, note that $g_s(c_j^g)=f_s(c_j^f)\in D_f$ for all $j\in\{1,\ldots,N-1\}$ and $s\in\{j,j+1\}$ (in particular, $D_g=D_f$). Also, from $2^\times$ we have that $f^k(d)=g^k(d)$ for all $d\in D_f$ and $k\in\{0,\ldots,n(d)-1\}$. Thus, from $3^\times$ we deduce
\[
\big|g^k(d)-c_\ell^g\big|=\left\{\begin{array}{ll}
    \big|c-h^{-1}(c_\ell^f)\big|>0 &\quad\text{ if $g^k(d)=c\in\Delta_f$,}  \\
    \big|f^k(d)-h^{-1}(c_\ell^f)\big|>0 & \quad\text{ if $k\leq n(d)-1$,}
\end{array}\right.
\]
for every $\ell\in\{1,\ldots,N-1\}$. This implies that $g^j(D_g)\cap\Delta_g=\emptyset$ for all $j\in\{0,\ldots,n-1\}$, which proves the lemma. 
\end{proof}

\begin{corollary}\label{PC.GENE}
There exists a residual set $\mathcal F\subset \mathcal{P\!L}_N(X)$ such that every map of $\mathcal F$ admits an invariant Borel probability measure. 
\end{corollary}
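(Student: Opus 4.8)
The plan is to read Corollary \ref{PC.GENE} off from Lemmas \ref{NO.INTERSECTA} and \ref{FN.LEMMA} together with Pires' Theorem \ref{PIRES}. First I would put
\[
\mathcal F:=\mathcal{P\!L}_N(X)\cap\bigcap_{n\geq1}\mathcal F_n ,
\]
where $\mathcal F_n$ is the set from Lemma \ref{NO.INTERSECTA}. Unwinding the definition of $\mathcal F_n$ one sees at once that $f\in\mathcal F$ if and only if $f\in\mathcal{P\!L}_N(X)$ and $f^{j}(D_f)\cap\Delta_f=\emptyset$ for every $j\geq0$; that is, $\mathcal F$ is exactly the set of piecewise Lipschitz maps on $X$ with $N$ continuity branches that have no connections in the sense of Definition \ref{NO.CONECCTIONS}. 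Hence, once $\mathcal F$ is known to be residual in $\mathcal{P\!L}_N(X)$, Theorem \ref{PIRES} applies to every $f\in\mathcal F$ and produces the claimed invariant Borel probability measure, finishing the proof.

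To obtain residuality I would show that each $\mathcal F_n\cap\mathcal{P\!L}_N(X)$ is a dense open subset of $\mathcal{P\!L}_N(X)$, so that $\mathcal F$ is a countable intersection of dense open subsets. Openness is immediate from Lemma \ref{NO.INTERSECTA}: $\mathcal F_n$ is open in $\mathcal{PC}_N^0(X)$, hence its trace on $\mathcal{P\!L}_N(X)$ is open in $\mathcal{P\!L}_N(X)$. For density I would invoke Lemma \ref{FN.LEMMA}, being careful that the approximants remain piecewise Lipschitz: the perturbations $g=f\circ h$ built there can be produced with $h$ piecewise linear (the requirements $1^{\times}$--$3^{\times}$ imposed on $h$ are compatible with piecewise linearity), and then $g$ is piecewise Lipschitz whenever $f$ is, with $\lambda_g$ no larger than $\lambda_f$ times the largest slope of $h$. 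Thus $\mathcal F_n\cap\mathcal{P\!L}_N(X)$ is also dense in $\mathcal{P\!L}_N(X)$.

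All the real work has already been done in the preparatory lemmas -- the quantitative orbit-tracking estimates of Lemma \ref{MISMA.PIEZA3} underlying the openness in Lemma \ref{NO.INTERSECTA}, and the homeomorphism-perturbation construction underlying the density in Lemma \ref{FN.LEMMA} -- so at the level of the corollary there is no serious obstacle. The one place that deserves a careful line is precisely the bookkeeping above: one must verify that the perturbations can be kept inside $\mathcal{P\!L}_N(X)$, so that $\mathcal F_n\cap\mathcal{P\!L}_N(X)$ is dense and open \emph{in the subspace} $\mathcal{P\!L}_N(X)$, rather than merely the trace of a set that is well behaved in the ambient space $\mathcal{PC}_N^0(X)$. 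Note also that $\mathcal{P\!L}_N(X)$ is a subspace of the incomplete space $\mathcal{PC}_N^0(X)$ and is not claimed to be complete, so ``residual'' is to be read here in the sense of containing a countable intersection of dense open sets, which is the notion of topological genericity asserted for this family of maps.
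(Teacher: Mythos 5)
Your proposal is correct and follows essentially the same route as the paper: take $\mathcal F=\bigcap_n\mathcal F_n$, get openness from Lemma \ref{NO.INTERSECTA} and density from Lemma \ref{FN.LEMMA}, identify $\mathcal F$ with the maps without connections, and apply Theorem \ref{PIRES}. The only difference is that the paper additionally invokes Theorem \ref{BAIRE} to assert that the countable intersection is itself dense (its definition of ``residual'' requires this), whereas you read ``residual'' in the weaker comeager sense; your extra care about keeping the perturbations inside $\mathcal{P\!L}_N(X)$ is a point the paper handles implicitly in Lemma \ref{FN.LEMMA}.
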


\begin{proof}
Note that
\setlength \arraycolsep{2pt} 
\begin{eqnarray*}
\mathcal F:=\bigcap_{n\in\mathbb N} \mathcal F_n &=& \big\{f\in\mathcal{PC}^r_N(X)\ :\ f^j(D_f)\cap\Delta_f=\emptyset\quad\forall\,j\in\mathbb N\big\}\\
&=& \big\{f\in\mathcal{PC}^r_N(X)\ :\ \text{$f$ has no connections}\big\}
\end{eqnarray*}
Thus, from Theorem \ref{BAIRE} and Lemma \ref{FN.LEMMA} we deduce that $\mathcal F$ is dense in $\mathcal{P\!L}_N(X)$ and from Theorem \ref{PIRES} we obtain the genericity of maps in $\mathcal{P\!L}_N(X)$ that admit an invariant Borel probability measure.
\end{proof}

\noindent {\bf Acknowledgments:} AC was supported by CIMFAV Project CIDI0403-2021, Universidad de Valparaíso, Chile.

\end{document}